\newtheorem{lem}{Lemma}[section]
\newtheorem{thm}{Theorem}[section]
\newtheorem{rem}{Remark}[section]
\newtheorem{exmp}{Example}
\newtheorem{alg}{Algorithm}[section]
\numberwithin{equation}{section}
\newcommand{\D}{\mathbb{D}}
\newcommand{\A}{\mathbb{A}}
\newcommand{\X}{\mathbb{X}}
\newcommand{\G}{\mathbb{G}}
\newcommand{\R}{\mathbb{R}}
\newcommand{\C}{\mathbb{C}}
\newcommand{\cM}{\mathcal{M}}
\newcommand{\Ga}{\Gamma}
\newcommand{\GaK}{\Gamma_{\!\!K}}
\newcommand{\Om}{\Omega}
\newcommand{\I}{\mathbb{I}}
\newcommand{\caE}{\mathcal{E}}
\newcommand{\lam}{\lambda}
\newcommand{\bfx}{\mathbf{x}}
\newcommand{\bY}{\mathbf{Y}}
\newcommand{\bW}{\mathbf{W}}
\newcommand{\bB}{\mathbf{B}}
\newcommand{\wbY}{\widetilde{\mathbf{Y}}}
\newcommand{\bR}{\mathbf{R}}
\newcommand{\bV}{\mathbf{V}}
\newcommand{\bv}{\mathbf{v}}
\newcommand{\bw}{\mathbf{w}}
\newcommand{\bfa}{\mathbf{a}}
\newcommand{\bfb}{\mathbf{b}}
\newcommand{\bfq}{\mathbf{q}}
\newcommand{\bmp}{\bm{\psi}}
\newcommand{\E}{\mathbb{E}}
\newcommand{\fespaceq}{\mathbf{W}_p(\mathcal{M})}
\newcommand{\cam}{\mathcal{M}}
\newcommand{\bM}{\mathbb{M}}
\newcommand{\cP}{\mathcal{P}}
\renewcommand{\Re}{{\rm Re}}
\renewcommand{\i}{{\bf i}}
\newcommand{\lj}{[{\hskip -1.5pt} [}
\newcommand{\rj}{]{\hskip -1.5pt} ]}
\newcommand{\la}{\langle}
\newcommand{\ra}{\rangle}
\newcommand{\pa}{\partial}
\newcommand{\be}{\begin{eqnarray}}
\newcommand{\ee}{\end{eqnarray}}
\newcommand{\beq}{\begin{equation}}
\newcommand{\eeq}{\end{equation}}
\newcommand{\ben}{\begin{eqnarray*}}
\newcommand{\een}{\end{eqnarray*}}
\newcommand{\nn}{\nonumber}
\title{Efficient and Parallel Solution of High-order Continuous Time Galerkin for Dissipative and Wave Propagation Problems\footnotemark[1]}
\author{
Zhiming Chen\footnotemark[2]
\and
Yong Liu\footnotemark[3]
}
\date{}
\begin{document}
\maketitle

\renewcommand{\thefootnote}{\fnsymbol{footnote}}
\footnotetext[1]{This work is supported in part by China National Key Technologies R\&D Program under the grant 2019YFA0709600 and China NSF under the grant 118311061, 12288201, 12201621. }
\footnotetext[2]{LSEC, Institute of Computational Mathematics,
Academy of Mathematics and System Sciences and School of Mathematical Science, University of
Chinese Academy of Sciences, Chinese Academy of Sciences,
Beijing 100190, China. E-mail: zmchen@lsec.cc.ac.cn}
\footnotetext[3]{LSEC, Institute of Computational Mathematics, Academy of Mathematics and Systems Science, Chinese Academy of Sciences, Beijing 100190, P.R. China.  E-mail: yongliu@lsec.cc.ac.cn}

\begin{center}
\small
\begin{minipage}{0.9\textwidth}
\textbf{Abstract.}
We propose efficient and parallel algorithms for the implementation of the high-order continuous time Galerkin method for dissipative and wave propagation problems. By using Legendre polynomials as shape functions, we obtain a special structure of the stiffness matrix which allows us to extend the diagonal Pad\'e approximation to solve ordinary differential equations with source terms. The unconditional stability, $hp$ error estimates, and $hp$ superconvergence at the nodes of the continuous time Galerkin method are proved. Numerical examples confirm our theoretical results.

\medskip
\textbf{Key words.}
Implicit time discretization; Pad\'e approximation; Parallel implementation
\medskip

\textbf{AMS classification}.
65M60
\end{minipage}
\end{center}
\setlength{\parindent}{2em}
\section{Introduction}
\label{sec_intro}
In this paper, we study the following system of ordinary differential equations (ODEs)
\begin{align}\label{p1}
{\bY}'(t)=\D\bY(t)+\bR(t) \text{ in } (0,T), \ \ \ \ \bY(0)=\bY_0,
\end{align}
which is obtained from the method-of-lines approach for linear partial differential equations (PDEs) after space discretization. Here $T>0$ is the length of the time interval, $\bY,\bR\in \mathbb{R}^M$, and $\D$ is an $M\times M$ real constant matrix, where $M$ is the number of degrees of freedom of the spatial discretization. Without loss of generality, we assume
\begin{align}\label{p2}
\D+\D^T\leq 0,
\end{align}
that is, $\D+\D^T$ is a semi-negative definite matrix.  This condition is satisfied by a large class of linear PDEs including the dissipative problems such as the parabolic equations and the wave propagation problems such as the wave equation and Maxwell equations.

Let $0=t_0<t_1<\cdots<t_N=T$ be a partition of $(0,T)$. If the source $\bR=\bf{0}$ in \eqref{p1}, the exact solution in each time interval $(t_n,t_{n+1})$ is $\bY(t)=e^{\D(t-t_n)}\bY(t_n)$ for which Pad\'e approximation to the exponential function can be used to construct and analyze numerical schemes to solve \eqref{p1}. In \cite{Saad1989}, by using the partial fraction formula for the Pad\'e approximation, the $[r/r]$, $r\ge 1$, Pad\'e approximation leads to the following method
\beq\label{a1}
\bY(t_{n+1})\approx\frac{P_r(\tau_n\D)}{P_r(-\tau_n\D)}\bY(t_n)=\left[(-1)^r\I+\sum^r_{j=1}-\frac{P_r(-\zeta_j)}{P'_r(\zeta_j)}(\zeta_j\I+\tau_n\D)^{-1}\right]\bY(t_n),
\eeq
where $\tau_n=t_{n+1}-t_n$, $\I\in\R^{M\times M}$ is the identity matrix, $P_r(z)$ is the numerator of the $[r/r]$ Pad\'e approximation to the exponential function $e^z$, and $\{\zeta_1,\cdots,\zeta_r\}$ are zeros of $P_r(z)$ which are known to be simple and lie in the left-half plane. \eqref{a1} indicates that one can compute the approximation of the solution $\bY(t_{n+1})$ in each time step by solving $k$ complex matrix problems and $r-2k$ real matrix problems of the form $\zeta\I+\tau_n\D$ in parallel, where $k$, $0\le k\le r/2$, is the number of complex zeros of $P_r(z)$ (see Remark \ref{rem1} below). The purpose of this paper is to construct algorithms sharing this very desirable property for solving \eqref{p1} with general nonzero sources $\bR(t)$.

There exists a large literature on implicit single-step time-stepping methods for solving \eqref{p1} (see, e.g.,\cite{Hairer1996}
and the references therein). The following continuous time Galerkin method proposed in \cite{Hulme} is probably the simplest
\beq\label{a2}
\bY'_r=\D\cP_{r-1}\bY_r+\cP_{r-1}\bR \ \ \ \mbox{in }(t_n,t_{n+1}),\ \ \ 0\le n\le N-1,
\eeq
where $\bY_r$ is a piecewise polynomial of degree $r\ge 1$ in each interval $(t_n,t_{n+1})$, continuous at the nodes $t=t_n$, and $\cP_{r-1}$ is the local $L^2$ projection to the space of polynomials of degree $(r-1)$ in each interval. It is shown in \cite{Hulme} that \eqref{a2} is equivalent to the $r$-stage Gauss collocation method at the nodes when $\bR=\bf{0}$ and has the highest classical order $2r$ among all $r$-stage Runge-Kutta methods \cite[Table 5.12]{Hairer1996}. The continuous time Galerkin method, together with finite element discretization in space, is used in \cite{Aziz} for the heat equation and in \cite{French1996CTG}, \cite{Monk2014JSC} for the wave equation. We refer to \cite{Akrivis2011NM} for a unified framework and the comparison of the most popular implicit single-step time-stepping methods including also the discontinuous time Galerkin method and various Runge-Kutta methods.

The difficulty in using the high-order continuous time Galerkin method or any implicit time Runge-Kutta methods is that a straightforward implementation requires to solve a system of linear equations of the size $rM\times rM$, which is not feasible in most time for PDE problems. In a recent work \cite{Southworth}, efficient iterative algorithms are developed based on optimal preconditioning of the stage matrix for finding the stage vectors of the implicit Runge-Kutta methods for solving \eqref{p1}. For an $r$-stage implicit Runge-Kutta method, the stage matrix is an $r\times r$ block matrix with each block being a $M\times M$ matrix. One can find further references in \cite{Southworth} for developing efficient algorithms implementing the high order implicit time discretization methods in the literature. We also refer to \cite{Schwab}, \cite{Richter} for the implementation of the discontinuous time Galerkin method based on the block diagnalization of the stiffness matrix.

In this paper we propose an efficient realization of the method \eqref{a2} which uses Legendre polynomials as shape functions to obtain a new stiffness matrix which is different from the stage matrix in \cite{Southworth} applying to the Gauss collocation method. By exploiting the special structure of the stiffness matrix, we construct an algorithm which computes the solution $\bY_r(t_n)$ at each node by solving $k$ complex matrix problems and $r-2k$ real matrix problems in parallel, where $k$, $0\le k\le r/2$, is the number of complex zeros of the $[r/r]$ Pad\'e numerator $P_r(z)$. Moreover, a parallel-in-time algorithm is proposed to compute the other coefficients of the solution $\bY_r$ in each time interval $(t_n,t_{n+1})$ which solves in parallel $kr$ complex matrix problems and $(r-2k)r$ real matrix problems. For the dissipative system, in which $\D+\D^T$ is negative definite, in the parallel-in-time algorithm, only $k$ complex matrix problems and $(r-2k)$ real matrix problems need to be solved. We remark that our parallel-in-time algorithm is different from the other parallel-in-time algorithms based on domain decomposition or space-time multigrid techniques in the literature (see, e.g., \cite{Gander}).

As a by-product of our analysis, we obtain the following formula (Theorem \ref{thm:3.3}) to compute the nodal values $\bY_r(t_{n+1})$, $0\le n\le N-1$, of the solution of \eqref{a2}
\beq\label{a3}
\bY_r(t_{n+1})=\frac{P_r(\tau_n\D)}{P_r(-\tau_n\D)}\bY_r(t_n)+\sum^r_{k=1}(-1)^{k+1}\frac{\phi_{k1}(\tau_n\D)}{P_r(-\tau_n\D)}\,\bfb_{k-1}+\tau_n\bR_0,
\eeq
where for $k=1,\cdots,r$, $\phi_{k1}(\lam)$ is a polynomial of degree $r$ satisfying some recurrence relations, and $\bfb_k$, $\bR_0$ are vectors depending on the source $\bR$. \eqref{a3} can be viewed as a generalization of the $[r/r]$ Pad\'e approximation \eqref{a1} for solving the ODE system without sources.

The layout of the paper is as follows. In section 2 we introduce the continuous time Galerkin method for \eqref{p1} and prove the strong stability and derive a $hp$ error estimate. In section 3 we propose our parallel algorithms to implement the continuous time Galerkin method. In section 4 we consider an alternative implementation for the dissipative system. In section 5 we prove the optimal stability and error estimates in terms of $r$ when $\D$ is a symmetric or skew-symmetric matrix. In section 6 we consider the application of the algorithms in this paper to solve the linear convection-diffusion equation by using the local discontinuous Galerkin method and the wave equation with discontinuous coefficients by using the unfitted finite element spatial discretization.

\section{Implicit time discretization}\label{sec_implicit}
In this section, we introduce the continuous time Galerkin method for solving \eqref{p1}.
 Let $0=t_0<t_1<\ldots<t_{N}=T$ be a partition of the time interval $(0,T)$ with time steps $\tau_n=t_{n+1}-t_n$, $0\le n\le N-1$. We set
$I_n=(t_{n},t_{n+1})$ and $\tau=\displaystyle\max_{0\le n\le N-1}\{\tau_n\}$.
For any integer $m\geq 1$, we define the finite element space
\begin{align*}
\bV_\tau^{m}:=\{\bv\in [C(0,T)]^M: \bv|_{I_n}\in [P^m]^M,0\leq n \leq N-1\},
\end{align*}
where $P^m$ is the set of polynomials whose degree is at most $m$. Define the local projection $\mathcal{P}_m$, $m\ge 0$, such that in each time interval $I_n$, $\cP_m: [L^2(I_n)]^M\rightarrow [P^m]^M$ satisfies
\begin{align*}
\int_{I_n}(\mathcal{P}_m\bv,\bw)\, dt=\int_{I_n}(\bv,\bw)\,dt \quad \forall \bw \in [P^m]^M,
\end{align*}
where we denote $(\cdot,\cdot)$ the inner product of $\R^M$.
It is well-known (see, e.g., Schwab \cite{Schwab1998hp}) that for any $s\ge 0$, $m\ge 0$,
\begin{align}\label{err}
\|\bv-\cP_m \bv\|_{L^2(I_n)} \leq C\frac{\,\tau_n^{\min(m+1,s)}}{(m+1)^{s}}\|\bv\|_{H^{s}(I_n)}\ \ \forall\bv\in [H^s(I_n)]^M,
\end{align}
where the constant $C$ is independent of $m,\tau_n$ but may depend on $s$. In this paper, for any integer $d\ge 1$ and Banach space $X$, we denote $\|\cdot\|_X$ both the norm of $X$ and $[X]^d$.

For any integer $r\geq 1$, the continuous time Galerkin method for solving \eqref{p1} is to find the function $\bY_r\in \bV_\tau^r$ such that $\bY_r(0)=\bY_0$ and
\begin{align}\label{dp}
\bY_r'=\D\mathcal{P}_{r-1} \bY_r+\mathcal{P}_{r-1} \bR \ \ \mbox{in }I_n,\ \ 0\le n\le N-1.
\end{align}

The following stability lemma extends an idea in Griesmaier and Monk \cite{Monk2014JSC} where the continuous time Galerkin discretization in time and hybridizable discontinuous Galerkin method in space for the wave equation are considered.

\begin{lem}\label{lem:2.2}
The problem \eqref{dp} has a unique solution $\bY_r\in\bV^r_\tau$ which satisfies
\beq
\max_{1\le n\le N}\|\bY_r(t_n)\|_{\R^M}\le \|\bY_0\|_{\R^M}+CT^{1/2}\|\bR\|_{L^2(0,T)},\label{g3}
\eeq
\beq
\max_{0\le t\le T}\|\bY_r\|_{\R^M}\le Cr^2(\|\bY_0\|_{\R^M}+T^{1/2}\|\bR\|_{L^2(0,T)}).\label{g4}
\eeq
where the constant $C$ is independent of $r,\tau,\D$ and $\bR$.
\end{lem}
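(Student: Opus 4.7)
My plan is to establish the three assertions in order: well-posedness, the nodal bound \eqref{g3}, then the pointwise bound \eqref{g4}. The main tool throughout is to test \eqref{dp} with $\cP_{r-1}\bY_r$ and exploit an orthogonality identity that makes the discrete energy telescope.

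\textbf{Well-posedness.} On each $I_n$, expressing $\bY_r|_{I_n}$ in the basis of Legendre polynomials $L_0,\dots,L_r$ (scaled to $I_n$), together with the continuity requirement $\bY_r(t_n^+)=\bY_r(t_n^-)$, turns \eqref{dp} into a square linear system in the $(r+1)M$ unknown vector Legendre coefficients. It therefore suffices to show that the homogeneous problem ($\bR=\mathbf{0}$, $\bY_r(t_n)=\mathbf{0}$) has only the trivial solution; this will be a direct consequence of the energy identity derived next, which forces both $\|\bY_r(t_{n+1})\|=0$ and $\int_{I_n}(\D\cP_{r-1}\bY_r,\cP_{r-1}\bY_r)\,dt=0$, and hence, via the polynomial structure, $\bY_r\equiv\mathbf{0}$ on $I_n$.

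\textbf{Nodal estimate \eqref{g3}.} Take the $L^2(I_n)$-inner product of \eqref{dp} with $\cP_{r-1}\bY_r$. Since $\bY_r'\in[P^{r-1}]^M$, the component of $\bY_r$ orthogonal to $[P^{r-1}]^M$ drops out, so
\begin{equation*}
\int_{I_n}(\bY_r',\cP_{r-1}\bY_r)\,dt=\int_{I_n}(\bY_r',\bY_r)\,dt=\tfrac{1}{2}\bigl(\|\bY_r(t_{n+1})\|^2-\|\bY_r(t_n)\|^2\bigr).
\end{equation*}
The diffusive term $\int_{I_n}(\D\cP_{r-1}\bY_r,\cP_{r-1}\bY_r)\,dt$ is nonpositive by pointwise application of \eqref{p2} and may be dropped. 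Summing from $n=0$ to $m-1$ yields $\|\bY_r(t_m)\|^2\le\|\bY_0\|^2+2\int_0^{t_m}(\cP_{r-1}\bR,\cP_{r-1}\bY_r)\,dt$. To close this into an $r$-independent bound of the form $\|\bY_0\|^2+CT^{1/2}\|\bR\|_{L^2(0,T)}\cdot\max_m\|\bY_r(t_m)\|$ (from which \eqref{g3} follows by solving a quadratic), I would exploit the identity
\begin{equation*}
\|\cP_{r-1}\bY_r\|_{L^2(I_n)}^2=\sum_{k=1}^r w_k^{(n)}\|\bY_r(s_k^{(n)})\|^2,
\end{equation*}
which follows from the exactness of the $r$-point Gauss--Legendre quadrature on $[P^{2r-2}]^M$ and from $\bY_r(s_k^{(n)})=\cP_{r-1}\bY_r(s_k^{(n)})$ at the zeros $s_k^{(n)}$ of $L_r$. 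Combined with Cauchy--Schwarz on the source integral and a secondary energy argument that bounds $\|\bY_r(s_k^{(n)})\|$ by $\|\bY_r(t_n)\|$ plus a local source contribution (this is where the A-stability of the $[r/r]$ Pad\'e/Gauss-collocation stability function enters), the quadratic inequality closes.

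\textbf{Pointwise estimate \eqref{g4}.} For $\bv\in[P^r(I_n)]^M$, expanding $\bv=\sum_{k=0}^r\bfa_k L_k$ and using $\|L_k\|_{L^\infty(I_n)}=1$, $\|L_k\|_{L^2(I_n)}^2=\tau_n/(2k+1)$, and Cauchy--Schwarz gives the inverse inequality $\|\bv\|_{L^\infty(I_n)}\le C r\tau_n^{-1/2}\|\bv\|_{L^2(I_n)}$. Applying this to $\bY_r$ and bounding $\|\bY_r\|_{L^2(I_n)}$ by $C r\tau_n^{1/2}(\|\bY_r(t_n)\|+T^{1/2}\|\bR\|_{L^2(0,T)})$ (obtained by combining \eqref{g3} with the Gauss-node identity above, which controls the stage values) contributes the extra factor of $r$, yielding the total $Cr^2$ prefactor.

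\textbf{Main obstacle.} The delicate point is the source integral in \eqref{g3}: a naive Cauchy--Schwarz leaves $\|\cP_{r-1}\bY_r\|_{L^2(0,T)}$, which can be tied to nodal values only through an inverse inequality carrying a power of $r$ that would spoil the $r$-independent constant asserted in \eqref{g3}. The Gauss-node identification of $\bY_r$ and $\cP_{r-1}\bY_r$, together with the A-stability structure of the Gauss collocation interpretation, is what makes the clean nodal bound possible. Once \eqref{g3} is available, \eqref{g4} follows by a relatively routine inverse-inequality argument.
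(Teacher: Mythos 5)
Your overall strategy for \eqref{g3} --- test with $\cP_{r-1}\bY_r$, telescope the energy, and then control the source term --- is natural, and your Gauss-node identity $\|\cP_{r-1}\bY_r\|_{L^2(I_n)}^2=\sum_k w_k^{(n)}\|\bY_r(s_k^{(n)})\|^2$ is correct ($|\cP_{r-1}\bY_r|^2$ has degree $2r-2$, and $\bY_r-\cP_{r-1}\bY_r$ is a multiple of $\widetilde L_r$, which vanishes at the Gauss points). But the step you defer to a ``secondary energy argument'' --- that $\|\bY_r(s_k^{(n)})\|$ is bounded by $\|\bY_r(t_n)\|$ plus a local source contribution, \emph{uniformly in $r$ and $\D$} --- is exactly the hard part, and it is not supplied by A-stability: A-stability of the $[r/r]$ Pad\'e function controls the node-to-node map $\bY_r(t_n)\mapsto\bY_r(t_{n+1})$, not the internal stage values. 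Indeed, the paper's own interior bound \eqref{g6} for the source-free part carries an extra factor of $r$ ($\|\hat\bY_r\|_{L^2(I_n)}\le C\tau_n^{1/2}r\|\bY_r^n\|_{\R^M}$), and the $r$-free version $\|\hat\bY_r\|_{L^2(I_n)}\le\tau_n^{1/2}\|\bY_r^n\|_{\R^M}$ is only established in Section 5, and only for symmetric or skew-symmetric $\D$, via the nontrivial determinant identities of Lemmas \ref{lemma_app1}--\ref{lem:5.2}. So as written your argument either loses a factor of $r$ in \eqref{g3} or rests on an unproved (and, for general $\D+\D^T\le 0$, unavailable) stage-stability estimate. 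The energy identity itself cannot rescue you: the dissipation term $\int_{I_n}(-\D\cP_{r-1}\bY_r,\cP_{r-1}\bY_r)\,dt$ only controls $\cP_{r-1}\bY_r$ in the seminorm induced by $-(\D+\D^T)/2$, which is merely semi-definite.

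The paper avoids this issue entirely by a splitting you do not use: on each $I_n$ write $\bY_r=\hat\bY_r+(\bY_r-\hat\bY_r)$, where $\hat\bY_r$ solves the source-free equation with $\hat\bY_r(t_n)=\bY_r(t_n)$. The plain energy argument gives $\|\hat\bY_r(t_{n+1})\|_{\R^M}\le\|\bY_r^n\|_{\R^M}$, while the correction vanishes at $t_n$ and hence factors as $(t-t_n)\bW_r$ with $\bW_r\in[P^{r-1}]^M$; testing its equation with $\bW_r$ yields $\|\bW_r\|_{L^2(I_n)}\le 2\|\bR\|_{L^2(I_n)}$, so $\|\bY_r-\hat\bY_r\|_{L^2(I_n)}\le 2\tau_n\|\bR\|_{L^2(I_n)}$ with no $r$-dependence. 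The source then couples only to this small correction, $\int_{I_n}(\cP_{r-1}\bR,\bY_r-\hat\bY_r)\,dt\le 2\tau_n\|\bR\|_{L^2(I_n)}^2$, which closes \eqref{g3} by the triangle inequality. The factor $r^2$ in \eqref{g4} then comes from the decomposition $\hat\bY_r=(-1)^r\bY_r^n\widetilde L_r+(t-t_n)\wbY_r$ (giving $\|\hat\bY_r\|_{L^2(I_n)}\le C\tau_n^{1/2}r\|\bY_r^n\|_{\R^M}$) combined with the $hp$ inverse inequality; your sketch of \eqref{g4} would also need this made precise, since the Gauss-node identity alone does not control the top Legendre coefficient of $\bY_r$. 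I recommend reworking the proof around this splitting.
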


\begin{proof} At each time step, \eqref{dp} is equivalent to a linear system of equations whose existence and uniqueness of the solution follow from the stability estimate \eqref{g4}. To prove the stability estimates \eqref{g3}-\eqref{g4}, we
denote by $\{L_j\}^\infty_{j=0}$ the Legendre polynomials on $(-1,1)$ and define $\widetilde L_j=L_j\circ\psi^{-1}$, where $\psi:(-1,1)\to (t_n,t_{n+1})$ is the mapping $\psi(\xi)=\frac{t_n+t_{n+1}}2+\frac{t_{n+1}-t_n}2\xi$ for $\xi\in (-1,1)$. Then $\{\widetilde L_j\}^\infty_{j=0}$
are orthogonal in $L^2(I_n)$, $\widetilde L_r(t_n)=(-1)^r, \widetilde L_r(t_{n+1})=1$, and
\beq\label{d2}
\int_{I_n}|\widetilde L_r|^2dt=\frac{\tau_n}{2r+1},\ \ \int_{I_n}|\widetilde L_r'|^2dt=\frac{2r(r+1)}{\tau_n}.
\eeq
For $n=0,\cdots,N-1$, let $\bY_r^n=\bY_r(t_n)$ and $\hat\bY_r\in [P^r]^M$ satisfy
\beq\label{g1}
\hat\bY'_r=\D\cP_{r-1}\hat\bY_r\ \ \mbox{in }I_n,\ \ \hat\bY_r(t_n)=\bY^n_r.
\eeq
By multiplying \eqref{g1} by $\hat\bY_r$ and integrating over $I_n$, we obtain easily by \eqref{p2} that
\ben
\frac 12\|\hat\bY_r(t_{n+1})\|_{\R^M}^2-\frac 12\|\bY^n_r\|_{\R^M}^2=\int_{I_n}(\D\cP_{r-1}\hat\bY_r,\cP_{r-1}\hat\bY_r)dt\le 0.
\een
This implies 
\beq
\|\hat\bY_r(t_{n+1})\|_{\R^M}\le\|\bY^n_r\|_{\R^M}.\label{g5}
\eeq
Since $\hat\bY_r\in [P^r]^M$ in $I_n$, we have the following decomposition introduced in \cite{Monk2014JSC}
\beq\label{d3}
\hat\bY_r=(-1)^r\bY^n_r\widetilde L_r+(t-t_n)\widetilde{\bY}_r,\ \ \widetilde{\bY}_r\in [P^{r-1}]^M.
\eeq
Notice that $\cP_{r-1}\hat\bY_r=\cP_{r-1}[(t-t_n)\wbY_r]$, substituting this decomposition to \eqref{g1}, we have
\ben
(-1)^r\bY^n_r\widetilde L_r'+\widetilde{\bY}_r+(t-t_n)\widetilde{\bY}_r'=\D\cP_{r-1}[(t-t_n)\widetilde{\bY}_r]\ \ \mbox{in }I_n.
\een
Multiply the equation by $\widetilde{\bY}_r\in [P^{r-1}]^M$ and integrate over $I_n$, we have by \eqref{d2} that
\ben
\frac 12\|\wbY_r\|_{L^2(I_n)}^2+\frac 12\tau_n\|\wbY_r(t_{n+1})\|_{\R^M}^2\le C\tau_n^{-1/2}r\|\bY_r^n\|_{\R^M}\|\wbY_r\|_{L^2(I_n)},
\een
where we have used the fact that by \eqref{p2}
\beq
\int_{I_n}(\D\cP_{r-1}[(t-t_n)\wbY_r],\wbY_r)dt=\int_{I_n}(t-t_n)(\D\wbY_r,\wbY_r)dt\le 0.\label{g9}
\eeq
This yields $\|\wbY_r\|_{L^2(I_n)}\le C\tau_n^{-1/2}r\|\bY^n_r\|_{\R^M}$ and thus by using \eqref{d2} 
\beq\label{g6}
\|\hat\bY_r\|_{L^2(I_n)}\le C\tau_n^{1/2}r\|\bY^n_r\|_{\R^M}.
\eeq
On the other hand, it follows from \eqref{dp} and \eqref{g1} that
\beq\label{g2}
(\bY_r-\hat\bY_r)'=\D\cP_{r-1}(\bY_r-\hat\bY_r)+\cP_{r-1}\bR\ \ \mbox{in }I_n,\ \ (\bY_r-\hat\bY_r)(t_n)=0.
\eeq
Then $\bY_r-\hat\bY_r=(t-t_n)\bW_r$ for some $\bW_r\in [P^{r-1}]^M$. By substituting this relation into the equation \eqref{g2} we have
\ben
\bW_r+(t-t_n)\bW_r'=\D\cP_{r-1}[(t-t_n)\bW_r]+\cP_{r-1}\bR\ \ \mbox{in }I_n.
\een
By multiplying the equation by $\bW_r$ and integrating over $I_n$, we obtain by a similar bound as in \eqref{g9} that
\ben
\frac 12\|\bW_r\|_{L^2(I_n)}^2+\frac 12\tau_n\|\bW_r(t_{n+1})\|_{\R^M}^2\le\|\bR\|_{L^2(I_n)}\|\bW_r\|_{L^2(I_n)}.
\een
This yields $\|\bW_r\|_{L^2(I_n)}\le 2\|\bR\|_{L^2(I_n)}$ and thus 
\beq\label{g7}
\|\bY_r-\hat\bY_r\|_{L^2(I_n)}\le 2\tau_n\|\bR\|_{L^2(I_n)}.
\eeq
Now by multiplying \eqref{g2} by $\bY_r-\hat\bY_r$ and integrating over $I_n$ we obtain by \eqref{p2} and \eqref{g7} that 
\ben
\frac 12\|(\bY_r-\hat\bY_r)(t_{n+1})\|_{\R^M}^2\le\int_{I_n}(\cP_{r-1}\bR,\bY_r-\hat\bY_r)dt\le 2\tau_n\|\bR\|_{L^2(I_n)}^2,
\een
which implies by the triangle inequality and \eqref{g5} that
\ben
\|\bY_r(t_{n+1})\|_{\R^M}\le\|\bY_r^n\|_{\R^M}+2\tau_n^{1/2}\|\bR\|_{L^2(I_n)}.
\een
This yields \eqref{g3}. Next by using the triangle inequality, \eqref{g6}, and \eqref{g7}, we have
\beq\label{g8}
\|\bY_r\|_{L^2(I_n)}\le C\tau^{1/2}_nr\|\bY^n_r\|_{\R^M}+2\tau_n\|\bR\|_{L^2(I_n)},
\eeq
which implies by the $hp$ inverse estimate that
\ben
\max_{t_n\le t\le t_{n+1}}\|\bY_r\|_{\R^M}\le C\tau_n^{-1/2} r\|\bY_r\|_{L^2(I_n)}\le Cr^2\|\bY^n_r\|_{\R^M}+C\tau_n^{1/2}r\|\bR\|_{L^2(I_n)}.
\een
This shows \eqref{g4} and completes the proof of the lemma. 
\end{proof}


To derive an $hp$ a priori error estimate for the continuous time Galerkin method \eqref{dp}, we first recall an interpolation operator in the literature (see, e.g., \cite[Theorem 3.17]{Schwab1998hp}).

\begin{lem}\label{lem:2.3}
There exists an interpolation operator $\Pi_r:[H^1(0,T)]^M\to\bV^r_\tau$ such that for any $\bv\in [W^{1+s,\infty}(0,T)]^M$, $s\ge 1$, and $n=0,1,\cdots,N-1$,
\be
& &(\Pi_r\bv)(t_n)=\bv(t_n), \ \ (\Pi_r\bv)(t_{n+1})=\bv(t_{n+1}),\ \ (\Pi_r\bv)'=\cP_{r-1}\bv'\ \ \mbox{in }I_n,\label{Pi1}\\
& &\|\bv-\Pi_r\bv\|_{L^2(I_n)}\le C\frac{\tau^{\min(r+1,s)}}{r^s}\|\bv\|_{H^s(I_n)},\label{Pi2}\\
& &\max_{t_n\le t\le t_{n+1}}\|\bv-\Pi_r\bv\|_{\R^M}\le C\frac{\tau^{\min(r,s)+1}}{r^s}\|\bv'\|_{W^{s,\infty}(I_n)},\label{Pi3}
\ee
where the constant $C$ is independent of $\tau,r$ but may depend on $s$.
\end{lem}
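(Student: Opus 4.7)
The plan is to define $\Pi_r\bv$ locally on each interval $I_n$ by
\[
(\Pi_r\bv)(t) = \bv(t_n) + \int_{t_n}^t (\cP_{r-1}\bv')(s)\,ds.
\]
Since $\cP_{r-1}\bv'\in[P^{r-1}]^M$, the restriction $\Pi_r\bv|_{I_n}$ is a polynomial of degree at most $r$, and by construction $(\Pi_r\bv)(t_n)=\bv(t_n)$ and $(\Pi_r\bv)'=\cP_{r-1}\bv'$ on $I_n$. Because $\cP_{r-1}$ preserves integrals against constants, $\int_{I_n}\cP_{r-1}\bv'\,dt=\int_{I_n}\bv'\,dt$, which forces $(\Pi_r\bv)(t_{n+1})=\bv(t_{n+1})$, so $\Pi_r\bv$ agrees with $\bv$ at every node, is continuous on $[0,T]$, and lies in $\bV^r_\tau$. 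This simultaneously establishes \eqref{Pi1}.

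For the $L^2$ estimate \eqref{Pi2} I would expand $\bv'$ componentwise in the rescaled Legendre basis $\{\widetilde L_k\}_{k\ge 0}$ on $I_n$, writing $\bv' = \sum_{k\ge 0}\bfa_k\widetilde L_k$, so that $\bv'-\cP_{r-1}\bv' = \sum_{k\ge r}\bfa_k\widetilde L_k$. The antiderivative identity $\int_{t_n}^t\widetilde L_k\,ds = \frac{\tau_n}{2(2k+1)}\bigl(\widetilde L_{k+1}(t)-\widetilde L_{k-1}(t)\bigr)$ (valid for $k\ge 1$) then gives an orthogonal Legendre expansion of $\bv-\Pi_r\bv$ whose coefficient at level $k$ carries an extra factor of order $\tau_n/(k+1)$ relative to $\bfa_k$. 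Squaring, summing over $k\ge r$, and relating the resulting series to $\|\bv\|_{H^s(I_n)}$ via the spectral characterization of Sobolev norms in Legendre coefficients produces the sharp bound $\|\bv-\Pi_r\bv\|_{L^2(I_n)}\le C\tau^{\min(r+1,s)}r^{-s}\|\bv\|_{H^s(I_n)}$. The pointwise estimate \eqref{Pi3} follows from $(\bv-\Pi_r\bv)(t)=\int_{t_n}^t(\bv'-\cP_{r-1}\bv')(s)\,ds$ combined with $\|\widetilde L_k\|_{L^\infty(I_n)}=1$ and decay estimates on $\bfa_k$ obtained by repeated integration by parts against $\widetilde L_k$, which is where the hypothesis $\bv'\in[W^{s,\infty}(I_n)]^M$ is used.

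The main obstacle is the sharp $r^{-s}$ factor in the denominators. A crude application of the Poincar\'e bound $\|w\|_{L^2(I_n)}\le C\tau_n\|w'\|_{L^2(I_n)}$ to $w=\bv-\Pi_r\bv$, coupled with \eqref{err} applied to $\bv'$, loses one power of $r$ and yields only $r^{-(s-1)}$. Recovering the missing power requires the spectral bookkeeping described above, which exploits the extra factor $1/(2k+1)$ picked up when integrating each Legendre mode. Since the complete analysis is carried out as Theorem~3.17 of \cite{Schwab1998hp}, a full proof reduces to verifying that the construction above is the interpolant used there and quoting the sharp estimates.
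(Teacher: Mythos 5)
Your construction of $\Pi_r$ and the verification of \eqref{Pi1} are exactly the paper's (same antiderivative-of-$\cP_{r-1}\bv'$ definition, same observation that $\cP_{r-1}$ preserves the integral of $\bv'$ to get nodal exactness), and your overall plan is sound, but you diverge from the paper on the two estimates. For \eqref{Pi3} the paper is more economical: it simply writes $\|\bv-\Pi_r\bv\|_{L^\infty}\le\tau_n^{1/2}\|\bv'-\cP_{r-1}\bv'\|_{L^2(I_n)}$ by Cauchy--Schwarz and invokes \eqref{err}; your route through Legendre-coefficient decay would work but is unnecessary machinery. For the key estimate \eqref{Pi2} the two arguments are genuinely different: the paper quotes Schwab only for $s\ge 2$ and supplies an Aubin--Nitsche duality argument (solve $-\bw''=\bv-\Pi_r\bv$ with $\bw\in H^1_0(I_n)$, integrate by parts using the vanishing of $\bv-\Pi_r\bv$ at the endpoints, and insert $\cP_{r-1}$ on both factors to gain $\tau/r$ from the $\bw$ side) to reach $s\ge 1$. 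You instead recover the missing power of $r$ directly from the identity $\int_{t_n}^t\widetilde L_k\,ds=\frac{\tau_n}{2(2k+1)}(\widetilde L_{k+1}-\widetilde L_{k-1})$, which yields $\|\bv-\Pi_r\bv\|_{L^2(I_n)}\le C(\tau_n/r)\|\bv'-\cP_{r-1}\bv'\|_{L^2(I_n)}$ and then \eqref{err} applied to $\bv'\in H^{s-1}$ finishes for all $s\ge 1$; this is correct and arguably more self-contained, and you rightly diagnose that the naive Poincar\'e bound loses a factor of $r$. One caution: your closing suggestion to ``reduce to quoting'' Schwab's Theorem~3.17 would not by itself cover $1\le s<2$ (which is precisely why the paper adds the duality argument), so you should carry out your spectral computation rather than defer to the reference.
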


\begin{proof} The interpolation operator is defined as
\ben
\Pi_r\bv=\bv(t_n)+\int_{t_n}^t(\cP_{r-1}\bv')dt\ \ \ \ \forall t\in I_n.
\een
\eqref{Pi1} follows easily from this definition. Next by using \eqref{err}, we have
\ben
\max_{t_n\le t\le t_{n+1}}\|\bv-\Pi_r\bv\|_{\R^M}\le\tau_n^{1/2}\|\bv'-\cP_{r-1}\bv'\|_{L^2(I_n)}
\le C\frac{\tau_n^{\min(r,s)+1}}{r^s}\|\bv'\|_{W^{s,\infty}(I_n)}.
\een
This shows \eqref{Pi3}.

The estimate \eqref{Pi2} is proved for $s\ge 2$ in \cite{Schwab1998hp}. Here we use the duality argument to show \eqref{Pi2} also from $s\ge 1$. Let $\bw\in H^1_0(I_n)$ be the solution of the problem
\ben
-\bw''=\bv-\Pi_r\bv\ \ \mbox{in }I_n.
\een
It is easy to see that $\|\bw\|_{H^2(I_n)}\le C\|\bv-\Pi_r\bv\|_{L^2(I_n)}$. Since $(\bv-\Pi_r\bv)(t_n)=\mathbf{0}, (\bv-\Pi_r\bv)(t_{n+1})=\mathbf{0}$, we multiply the equation by $\bv-\Pi_r\bv$, integrate over $I_n$, and use \eqref{Pi1} to obtain
\ben
\|\bv-\Pi_r\bv\|_{L^2(I_n)}^2=\int_{I_n}(\bw',\bv'-(\Pi_r\bv)')dt&=&\int_{I_n}(\bw'-\cP_{r-1}\bw',\bv'-\cP_{r-1}\bv')dt\\
&\le&C\frac{\tau^{\min(r+1,s)}}{r^s}\|\bw\|_{H^2(I_n)}\|\bv\|_{H^s(I_n)}.
\een
This completes the proof by using $\|\bw\|_{H^2(I_n)}\le C\|\bv-\Pi_r\bv\|_{L^2(I_n)}$. 
\end{proof}

The following theorem on the $hp$ error estimate is the main result of this section.

\begin{thm}\label{thm:2.1}
Let $s\ge 1$. Assume that $\bR\in [H^s(0,T)]^M$, $\bY\in [W^{1+s,\infty}(0,T)]^M$ and $\bY_r\in\bV^r_\tau$ is the solution of the problem \eqref{dp}, we have
\ben
& &\max_{1\le n\le N}\|(\bY-\bY_r)(t_n)\|_{\R^M}\le CT^{1/2}\frac{\tau^{\min(r+1,s)}}{r^s}\|\D\bY\|_{H^s(0,T)},\\
& &\max_{0\le t\le T}\|\bY-\bY_r\|_{\R^M}\le C(1+T^{1/2})\frac{\tau^{\min(r+1,s)}}{r^{s-2}}(T^{1/2}\|\bY\|_{W^{s+1,\infty}(0,T)}+\|\bR\|_{H^s(0,T)}),
\een
where the constant $C$ is independent of $\tau,r,\D$ but may depend on $s$.
\end{thm}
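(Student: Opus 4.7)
The plan is to introduce the interpolant $\Pi_r$ from Lemma~\ref{lem:2.3} and split the error as
\[
\bY-\bY_r=\boldsymbol{\xi}+\boldsymbol{\eta},\qquad \boldsymbol{\xi}:=\bY-\Pi_r\bY,\qquad \boldsymbol{\eta}:=\Pi_r\bY-\bY_r\in\bV^r_\tau.
\]
The interpolation piece $\boldsymbol{\xi}$ is controlled directly by \eqref{Pi2}--\eqref{Pi3}, while the discrete piece $\boldsymbol{\eta}$ is handled by deriving a Galerkin-type equation for it and invoking the stability Lemma~\ref{lem:2.2}. Since $\Pi_r\bY$ interpolates $\bY$ at every node, the full nodal error collapses to $\boldsymbol{\eta}(t_n)$; this is what lets the first estimate avoid the extra factor $r^2$ coming from \eqref{g4}.

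First I would derive the equation satisfied by $\boldsymbol{\eta}$. By \eqref{Pi1}, $(\Pi_r\bY)'=\cP_{r-1}\bY'$; inserting $\bY'=\D\bY+\bR$ and using that $\D$ is a constant matrix (so $\cP_{r-1}$ commutes with $\D$ componentwise) gives $(\Pi_r\bY)'=\D\cP_{r-1}\bY+\cP_{r-1}\bR$. Subtracting \eqref{dp} yields
\[
\boldsymbol{\eta}'=\D\cP_{r-1}\boldsymbol{\eta}+\cP_{r-1}(\D\boldsymbol{\xi})\ \ \text{in }I_n,\qquad \boldsymbol{\eta}(0)=\mathbf{0},
\]
the initial condition coming from $(\Pi_r\bY)(0)=\bY(0)=\bY_0=\bY_r(0)$. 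This is exactly \eqref{dp} with zero initial data and source $\D\boldsymbol{\xi}$, so Lemma~\ref{lem:2.2} may be applied.

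Next I would estimate the effective source $\D\boldsymbol{\xi}$. The key observation is that, because $\D$ is constant and $\Pi_r$ is defined componentwise via the two nodal values and $\cP_{r-1}\bv'$, one has $\D\Pi_r\bY=\Pi_r(\D\bY)$, so $\D\boldsymbol{\xi}=\D\bY-\Pi_r(\D\bY)$. Applying \eqref{Pi2} to $\D\bY$ gives
\[
\|\D\boldsymbol{\xi}\|_{L^2(0,T)}\le C\frac{\tau^{\min(r+1,s)}}{r^s}\|\D\bY\|_{H^s(0,T)}.
\]
Inserting this into \eqref{g3} together with $\boldsymbol{\xi}(t_n)=\mathbf{0}$ immediately delivers the first (nodal) estimate. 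For the uniform-in-time bound I would use \eqref{g4} to obtain $\max_t\|\boldsymbol{\eta}\|\le Cr^2T^{1/2}\|\D\boldsymbol{\xi}\|_{L^2(0,T)}$, add $\max_t\|\boldsymbol{\xi}\|$ from \eqref{Pi3}, and finally use $\D\bY=\bY'-\bR$ to bound $\|\D\bY\|_{H^s(0,T)}\le T^{1/2}\|\bY\|_{W^{s+1,\infty}(0,T)}+\|\bR\|_{H^s(0,T)}$.

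The only real subtlety is the commutativity identity $\D\Pi_r\bY=\Pi_r(\D\bY)$: it converts the naive bound carrying the factor $\|\D\|$ into one involving $\|\D\bY\|_{H^s}$, which is what makes the nodal estimate independent of the stiffness of $\D$ and reflects the superconvergence at the Gauss nodes hidden inside the continuous time Galerkin method \eqref{dp}.
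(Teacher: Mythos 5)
Your proposal is correct and follows essentially the same route as the paper: the same interpolant $\Pi_r$ from Lemma~\ref{lem:2.3}, the same error equation for $\Pi_r\bY-\bY_r$ viewed as an instance of \eqref{dp} with source $\D(\bY-\Pi_r\bY)$, and the same application of \eqref{g3}/\eqref{g4} together with \eqref{Pi2}/\eqref{Pi3}. The commutativity identity $\D\Pi_r\bY=\Pi_r(\D\bY)$ that you single out is indeed the step the paper uses implicitly when it bounds $\|\D\cP_{r-1}(\bY-\Pi_r\bY)\|_{L^2(0,T)}$ by $\|\D\bY\|_{H^s(0,T)}$, so making it explicit is a welcome clarification rather than a deviation.
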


\begin{proof}
Let $\Pi_r\bY\in\bV^r_\tau$ be the interpolation of $\bY$ defined in Lemma \ref{lem:2.3}. Since $(\Pi_r\bY)'=\cP_{r-1}\bY'$ in $I_n$, we have
\ben
(\Pi_r\bY)'=\cP_{r-1}(\D\bY+\bR)=\D\cP_{r-1}(\bY-\Pi_r\bY)+\D\cP_{r-1}(\Pi_r\bY)+\cP_{r-1}\bR\ \ \mbox{in }I_n.
\een
Thus by \eqref{dp} we have
\beq
\bY_r'-(\Pi_r\bY)'=\D\cP_{r-1}(\bY_r-\Pi_r\bY)-\D\cP_{r-1}(\bY-\Pi_r\bY).\label{err_equ}
\eeq
As $(\bY_r-\Pi_r\bY)(0)=\mathbf{0}$, we use \eqref{g3} and \eqref{Pi2} to obtain
\ben
\max_{1\le n\le N}\|(\bY_r-\Pi_r\bY)(t_n)\|_{\R^M}&\le&CT^{1/2}\|\D\cP_{r-1}(\bY-\Pi_r\bY)\|_{L^2(0,T)}\nn\\
&\le&CT^{1/2}\frac{\tau^{\min(r+1,s)}}{r^{s}}\|\D\bY\|_{H^s(0,T)}.
\een
This shows the first estimate as $\bY(t_n)=\Pi_r\bY(t_n)$. The second estimate can be proved similarly by using \eqref{g4}, \eqref{Pi3}, and $\bY'=\D\bY+\bR$. 
\end{proof}

We remark that the first estimate in Theorem \ref{thm:2.1} is optimal in $\tau$ and $r$ and the second estimate in the theorem is optimal in $\tau$ but suboptimal in $r$ which is due to the stability estimate \eqref{g4} in Lemma \ref{lem:2.2}. In section 5 we will show that the stability in the $L^2$ norm can be improved to remove the dependence on $r$ in \eqref{g8} when $\D$ is symmetric or skew-symmetric by using the explicit formulas of $\bY_r(t)$ in section 3. We remark that 
many spatial discretization matrices of the wave-like equations satisfy the property that $\D$ is skew-symmetric, such as the energy conserving mixed finite element methods for solving the Hodge wave equation in Wu and Bai \cite{Wu2021SINUM} and the unfitted finite element method of the acoustic wave equation in Chen et. al. \cite{Chen2021preprint}. 


The classical order of Runge-Kutta methods is the convergence order at the nodes $t=t_n$. For the continuous time Galerkin method, it is proved to be ${2r}$ when $r\ge 2$ in Hulme \cite{Hulme} for nonlinear ODEs and in Aziz and Monk \cite{Aziz} for parabolic equations. The following theorem shows the $hp$ superconvergence of the continuous time Galerkin method at the nodes by using the idea of quasi-projection in \cite[\S 4]{Aziz}.

\begin{thm}\label{super_conver}
Let $s\ge 1$. Assume that $\D^r\bY\in [H^{s}(0,T)]^M$ and $\bY_r\in\bV^r_\tau$ is the solution of the problem \eqref{dp}, we have
\ben
\max_{1\le n\le N}\|(\bY-\bY_r)(t_n)\|_{\R^M}\le CT^{1/2}\frac{\tau^{\min(2r,s+r-1)}}{r^{s}}\|\D^{r}\bY\|_{H^{s}(0,T)},
\een
where the constant $C$ is independent of $\tau,r$ but may depend on $s$.
\end{thm}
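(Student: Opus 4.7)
The plan is to employ the quasi-projection technique of Aziz and Monk \cite[\S 4]{Aziz}. I would construct a quasi-interpolant $W\in\bV^r_\tau$ with the two properties that (i) $W(t_n)=\bY(t_n)$ for every $n=0,1,\ldots,N$, and (ii) $W$ satisfies the perturbed continuous Galerkin identity
$$W'=\D\cP_{r-1}W+\cP_{r-1}\bR+E\ \ \text{in each }I_n,$$
with a small residual $E\in[P^{r-1}]^M$. The difference $\bY_r-W$ then solves
$$(\bY_r-W)'=\D\cP_{r-1}(\bY_r-W)-E\ \ \text{in }I_n,\ \ (\bY_r-W)(0)=\mathbf{0},$$
which is exactly the system \eqref{dp} with source $-E$ in place of $\bR$ and zero initial datum. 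Applying the stability estimate \eqref{g3} from Lemma~\ref{lem:2.2} yields
$$\max_{1\le n\le N}\|(\bY_r-W)(t_n)\|_{\R^M}\le CT^{1/2}\|E\|_{L^2(0,T)},$$
and since $(\bY-W)(t_n)=\mathbf{0}$ by construction, this bounds the nodal error $\|(\bY-\bY_r)(t_n)\|_{\R^M}$ as required.

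The quasi-projection $W$ is built by successive corrections. Start from $W_0=\Pi_r\bY$ (from Lemma~\ref{lem:2.3}); using $(\Pi_r\bY)'=\cP_{r-1}\bY'$ and $\bY'=\D\bY+\bR$, the initial residual is
$$E_0=W_0'-\D\cP_{r-1}W_0-\cP_{r-1}\bR=\D\cP_{r-1}(\bY-\Pi_r\bY),$$
whose $L^2(I_n)$-norm is of order $\tau_n^{\min(r+1,s)}/r^s\cdot\|\D\bY\|_{H^s(I_n)}$ by \eqref{Pi2}. Inductively define $W_j=W_{j-1}+\Phi_j$ for $j=1,\ldots,r-1$, with $\Phi_j\in[P^r]^M$ on each $I_n$ vanishing at both endpoints $t_n,t_{n+1}$ (so the nodal matching is preserved) and chosen so that the new residual $E_j=\Phi_j'-\D\cP_{r-1}\Phi_j+E_{j-1}$ has its leading Legendre mode cancelled. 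Expanding $E_{j-1}$ in the local basis $\{\widetilde L_k\}_{k=0}^{r-1}$ and using the orthogonality identities \eqref{d2} makes this explicit: $\Phi_j$ is obtained essentially by integrating $-E_{j-1}$ (picking up one extra factor of $\tau_n$), and the residual $E_j$ inherits the remaining term $-\D\cP_{r-1}\Phi_j$, so each iteration gains one extra power of $\tau_n$ and one extra application of $\D$. After $r-1$ iterations, $E_{r-1}$ involves $\D^r\bY$ and satisfies the target bound
$$\|E_{r-1}\|_{L^2(I_n)}\le C\frac{\tau_n^{\min(2r,s+r-1)+1/2}}{r^s}\|\D^r\bY\|_{H^s(I_n)},$$
the extra $\tau_n^{1/2}$ being the passage from pointwise to $L^2$-in-time size.

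The hard part will be the bookkeeping in this iteration: verifying that each $\Phi_j$ really can be chosen in $[P^r]^M$ with vanishing endpoint values, that the cancellation of the top Legendre mode produces exactly one extra power of $\tau_n$ per step (rather than just absorbing it), and that the factor $r^{-s}$ from the initial use of \eqref{Pi2} is not degraded across the iterations. The two regimes in the exponent arise naturally: when $s\ge r+1$, the initial residual is $O(\tau^{r+1})$ and all $r-1$ corrections bring the total to $\tau^{2r}$; when $s<r+1$, the initial residual is only $O(\tau^s)$ and the total becomes $\tau^{s+r-1}$. Summing the local bounds over $n$, inserting into \eqref{g3}, and using the nodal matching $(\bY-W)(t_n)=\mathbf{0}$ closes the argument.
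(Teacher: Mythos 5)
Your plan is essentially the paper's proof: your corrections $\Phi_j$ are (up to sign) the paper's quasi-projection functions $\bm{\omega}_j$ defined by $\bm{\omega}_j(t_n)=\mathbf{0}$, $\bm{\omega}_j'=\D\cP_{r-1}\bm{\omega}_{j-1}$ with $\bm{\omega}_0=\bY-\Pi_r\bY$, the final residual is $\D\cP_{r-1}\bm{\omega}_{r-1}$, and the conclusion follows from \eqref{g3} exactly as you say, with the two exponent regimes arising from \eqref{Pi2} applied to $\D^r\bm{\omega}_0$ as you describe. The one step you flag as ``the hard part'' --- that each correction vanishes at $t_{n+1}$ as well as $t_n$ --- is indeed the crux (the paper calls it its new observation), and it is closed by a short orthogonality induction rather than by inspecting Legendre modes: since $(\bm{\omega}_0',\bv)_{I_n}=0$ for all $\bv\in[P^{r-1}]^M$ and $\bm{\omega}_0$ vanishes at both endpoints, integration by parts gives $(\bm{\omega}_1',\bv')_{I_n}=(\D\bm{\omega}_0,\bv')_{I_n}=0$, hence $(\bm{\omega}_1',\bw)_{I_n}=0$ for all $\bw\in[P^{r-2}]^M$; taking $\bw$ constant yields $\bm{\omega}_1(t_{n+1})=\mathbf{0}$, and inductively $(\bm{\omega}_i',\bw)_{I_n}=0$ for $\bw\in[P^{r-i-1}]^M$, which suffices for all $i\le r-1$ (so the cascade terminates after exactly $r-1$ corrections, consistent with your count). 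Two small remarks: the case $r=1$ needs no corrections and follows from Theorem \ref{thm:2.1}; and your claimed extra factor $\tau_n^{1/2}$ in the bound for $\|E_{r-1}\|_{L^2(I_n)}$ is not actually obtainable (each integration step gains exactly one power of $\tau_n$ in $L^2$), but it is also not needed, since the $L^2$-to-$L^2$ bound $\|E_{r-1}\|_{L^2(I_n)}\le\tau^{r-1}\|\D^{r}\bm{\omega}_0\|_{L^2(I_n)}$ already gives the stated rate.
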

\begin{proof}
If $r=1$, the theorem follows from the first estimate of Theorem \ref{thm:2.1}.
Now we assume $r\ge 2$. Let $\Pi_r\bY\in\bV^r_\tau$ be the interpolation of $\bY$ defined in Lemma \ref{lem:2.3}. Denote $\bm{\omega}_0=\bY-\Pi_r\bY$. For $1\leq i\leq r-1$, we define correction functions $\bm{\omega}_i$ such that
\be
\bm{\omega}_i(t_n)=\mathbf{0},\,  \bm{\omega}_i'=\D\cP_{r-1}\bm{\omega}_{i-1} \ \ \text{ in } I_n,\ \  n=0,1,\cdots,N-1. \label{correct_fun}
\ee
We claim that $\bm{\omega}_i(t_{n+1})=\mathbf{0}$ so that $\bm{\omega}_i\in\bV^r_\tau$. In fact,
by \eqref{Pi1}, we have $(\bm{\omega}_0',\bv)_{I_n}=0$ for any $\bv\in [P^{r-1}]^M$, where $(\cdot,\cdot)_{I_n}$ is the
inner product of $[L^2(I_n)]^M$. Since $\bm{\omega}_0(t_n)=\bm{\omega}_0(t_{n+1})=\bm{0}$, we obtain by integration by parts that $(\bm{\omega}_1',\bv')_{I_n}=(\mathbb{D}\bm{\omega}_0,\bv')_{I_n}=0$.
Therefore, $(\bm{\omega}_1',\bv)_{I_n}=0$ for any $\bv\in [P^{r-2}]^M$ and consequently, $\bm{\omega}_1(t_{n+1})=\int_{I_n}\bm{\omega}_1'dt=\mathbf{0}$. By mathematical induction, we know easily by the same argument that
$(\bm{\omega}_i',\bv)_{I_n}=0$ for any $\bv\in [P^{r-i-1}]^M$ and $\bm{\omega}_i(t_{n+1})=\mathbf{0}$. This shows the claim.

Let $\displaystyle\bm{\omega}=\sum_{i=1}^{r-1}\bm{\omega}_{i}\in \bV^r_\tau$. By \eqref{err_equ} and \eqref{correct_fun}, we have
\ben
\bY_r'-(\Pi_r\bY)'+\bm{\omega}'=\D\cP_{r-1}(\bY_r-\Pi_r\bY+\bm{\omega})-\D\cP_{r-1}(\bm{\omega}_{r-1}).
\een
As $(\bY_r-\Pi_r\bY+\bm{\omega})(0)=\mathbf{0}$, we use \eqref{g3} to obtain
\ben
\|(\bY_r-\Pi_r\bY+\bm{\omega})(t_n)\|_{\R^M}&\le&CT^{1/2}\|\D\cP_{r-1}(\bm{\omega}_{r-1})\|_{L^2(0,T)}
\le CT^{1/2}\|\D\bm{\omega}_{r-1}\|_{L^2(0,T)}.
\een
Now it follows from \eqref{correct_fun} that
\ben
\|\D\bm{\omega}_i\|_{L^2(I_n)}\le \tau\|\D^2\bm{\omega}_{i-1}\|_{L^2(I_n)},\quad 1\leq i\leq r-1.
\een
By using \eqref{Pi2} we have then
\ben
\|\D\bm{\omega}_{r-1}\|_{L^2(I_n)}\le \tau^{r-1}\|\D^r\bm{\omega}_0\|_{L^2(I_n)}\le C\frac{\tau^{\min(2r,s+r-1)}}{r^s}\|\D^{r}\bY\|_{H^s(I_n)}.
\een
This completes the proof since by \eqref{Pi1} and \eqref{correct_fun}, $(\bY_r-\Pi_r\bY+\bm{\omega})(t_n)=(\bY_r-\bY)(t_n)$, $1\le n\le N$.   
\end{proof}

The correction function $\bm{\omega}=\sum^{r-1}_{i=1}\bm{\omega}_i$ is introduced in \cite{Aziz} which is related to the idea of quasi-projection in Douglas Jr. et al \cite{Douglas}. Our new observation is that $\bm{\omega}_i=\bf{0}$ at the nodes for $1\le i\le r-1$, which simplifies the proof.


To conclude this section, we recall some facts about Pad\'e approximation to the exponential function which can be found in Saff and Varga \cite{saff1975Num} and the references therein. For any integers $m,n\ge 0$, the $[m/n]$ Pad\'e approximation to $e^z$ is defined as the polynomials $P_m(z)\in P^m$, $Q_n(z)\in P^n$, $Q_n(0)=1$, for which
\ben
e^z-\frac{P_m(z)}{Q_n(z)}=O(|z|^{m+n+1})\ \ \ \ \mbox{as }|z|\to 0.
\een
It is known that
\beq\label{pade}
P_m(z)=\sum^m_{j=0}\frac{(m+n-j)!m!z^j}{(m+n)!j!(m-j)!},\ \ Q_n(z)=\sum^n_{j=0}\frac{(m+n-j)!n!(-z)^j}{(m+n)!j!(n-j)!}.
\eeq
Obviously, $Q_n(z)=P_n(-z)$. When $m=n$, $P_m(z), Q_m(z)$ are called diagonal Pad\'e numerator and denominator of type $[m/m]$ for $e^z$. The following lemma follows easily from \eqref{pade}

\begin{lem}\label{lem:2.4}
The diagonal Pad\'e numerator of type $[m/m]$ for $e^z$ satisfies $P_0(z)=1, P_1(z)=1+\frac 12z$, $P_2(z)=1+\frac 12 z+\frac 1{12}{z^2}$, and
\ben
P_m(z)=P_{m-1}(z)+\frac{z^2}{4(2m-1)(2m-3)}P_{m-2}(z),\ \ \ \ m\ge 2.
\een
\end{lem}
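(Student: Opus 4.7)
The plan is to read off all three claims directly from the explicit formula \eqref{pade} with $n=m$. Writing $P_m(z) = \sum_{j=0}^m c^m_j z^j$ with
\[
c^m_j = \frac{(2m-j)!\, m!}{(2m)!\, j!\, (m-j)!}, \qquad 0 \le j \le m,
\]
and with the convention $c^m_j = 0$ for $j<0$ or $j>m$, the three base cases $P_0, P_1, P_2$ follow immediately by evaluating $c^m_j$ at $m \in \{0,1,2\}$ and the admissible $j$.

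To establish the three-term recurrence, I would compare the coefficient of $z^j$ on both sides and verify
\[
c^m_j = c^{m-1}_j + \frac{1}{4(2m-1)(2m-3)}\, c^{m-2}_{j-2}, \qquad 0 \le j \le m.
\]
The boundary indices split into two trivial cases. For $j\in\{0,1\}$ the correction term vanishes since $c^{m-2}_{j-2}=0$, and the identity reduces to $c^m_j = c^{m-1}_j$, which is a one-line factorial computation. For $j = m$ the middle term $c^{m-1}_m$ vanishes (as $m>m-1$) and the identity reduces to checking
\[
\frac{m!}{(2m)!} = \frac{1}{4(2m-1)(2m-3)}\cdot \frac{(m-2)!}{(2m-4)!},
\]
which follows from the elementary simplification $(2m)(2m-2)=4m(m-1)$. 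For the generic interior range $2\le j\le m-1$, I would factor the common quantity $\frac{(2m-2-j)!\,(m-2)!}{j!\,(m-j)!}$ out of all three terms and reduce the identity to a polynomial equality of low degree in $m$ and $j$, which is verified by direct expansion.

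The only obstacle is the routine bookkeeping of the factorial ratios; there is no conceptual content beyond the explicit formula \eqref{pade} borrowed from Saff and Varga. A slightly slicker alternative would exploit the hypergeometric representation of the diagonal Pad\'e approximants to $e^z$ and derive the recurrence from classical contiguous relations, but direct coefficient comparison is the most transparent route for this self-contained lemma.
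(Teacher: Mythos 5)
Your proposal is correct and is essentially the paper's own argument: the paper gives no separate proof, simply asserting that the lemma ``follows easily from \eqref{pade}'', which is precisely the direct coefficient comparison you carry out. The interior-index verification does reduce, after clearing the common factorial factor, to the polynomial identity $(2m-j)(2m-1-j)=2(2m-1)(m-j)+j(j-1)$, which checks out, so the bookkeeping you defer is indeed routine.
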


The following lemma is proved in Hairer and Wanner \cite[Theorem 4.12]{Hairer1996}, \cite[Theorem 2.4]{saff1975Num}. It is essential in proving the A-stability of numerical methods
for ODEs based on the Pad\'e approximation of the exponential function.

\begin{lem}\label{lem:2.5}
All zeros of the diagonal Pad\'e numerator of type $[m/m]$, $m\ge 1$, for $e^z$ are simple and lie in the half-plane $\{z\in\C:\Re (z)\le-2\}$.
\end{lem}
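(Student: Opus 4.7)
I would split the statement into two independent claims, simplicity of the zeros and the location bound $\Re(z)\le -2$, since the techniques differ.

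For simplicity, the cleanest route is via Kummer's confluent hypergeometric equation. A direct comparison with \eqref{pade} yields $P_m(z)={}_1F_1(-m;-2m;z)$, so $P_m$ solves
\ben
zP_m''(z)+(-2m-z)P_m'(z)+mP_m(z)=0.
\een
Since $P_m(0)=1\ne 0$, any zero $z_0$ of $P_m$ satisfies $z_0\ne 0$. If in addition $P_m'(z_0)=0$, evaluating Kummer's equation at $z_0$ yields $z_0P_m''(z_0)=0$, hence $P_m''(z_0)=0$. Differentiating Kummer's equation successively and evaluating at $z_0$, one obtains inductively that $P_m^{(k)}(z_0)=0$ for every $k\ge 0$, forcing $P_m\equiv 0$, a contradiction. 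Hence every zero is simple.

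For the location bound, it is equivalent to show that the rational function $R_m(z):=P_m(z)/P_m(-z)$, whose poles are the negatives of the zeros of $P_m$, is analytic in the open half-plane $\{\Re(z)<2\}$. The starting point is the Pad\'e residual identity
\ben
P_m(-z)e^z-P_m(z)=\frac{(-1)^m(m!)^2}{(2m)!}\int_0^z e^{z-t}t^m(z-t)^m\,dt,
\een
which implies both $R_m(z)-e^z=O(z^{2m+1})$ near $0$ and $|R_m(iy)|=1$ for every real $y$ (the latter being A-stability). Following the $E$-polynomial framework of Saff and Varga, I would then combine the unit-modulus behavior of $R_m$ on the imaginary axis with polynomial growth control of $R_m(z)/e^z$ inside the strip $0\le\Re z\le 2$, and apply a Phragm\'en--Lindel\"of argument in this strip to exclude poles of $R_m$ from $\{\Re(z)<2\}$.

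The main obstacle is the location estimate rather than the simplicity: A-stability alone places the zeros of $P_m$ only in the open left half-plane, and the refinement to $\Re(z)\le -2$ genuinely requires the strip argument together with sharp control of $R_m(z)/e^z$ along $\Re z=2$. That the constant $2$ cannot be improved is already visible at $m=1$, where $P_1(z)=1+z/2$ has its only zero precisely at $z=-2$.
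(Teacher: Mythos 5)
The paper does not actually prove Lemma \ref{lem:2.5}; it quotes the result from the literature, pointing to Hairer--Wanner \cite[Theorem 4.12]{Hairer1996} and Saff--Varga \cite[Theorem 2.4]{saff1975Num}. Your proposal must therefore stand as a free-standing proof. The simplicity half does: the identification $P_m(z)={}_1F_1(-m;-2m;z)$ is correct (the series terminates at $j=m$, before the denominator parameter $-2m$ causes any trouble), one checks directly that $zP_m''+(-2m-z)P_m'+mP_m=0$, and since $P_m(0)=1$ every zero lies away from the singular point $z=0$ of this equation, so your derivative-chasing induction (equivalently, uniqueness for the initial value problem at a regular point) does force $P_m\equiv 0$ if a zero were multiple. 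This is a clean, self-contained argument for the first assertion.

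The location bound, however, contains a genuine gap, and it sits exactly where you locate the difficulty. Two concrete problems. First, a Phragm\'en--Lindel\"of or maximum-principle argument in the strip $0\le\Re z\le 2$ cannot by itself ``exclude poles of $R_m$'' from a region: such arguments presuppose that the function is analytic there, which is precisely what is to be proved. The arguments that actually work are either the order-star/argument-principle machinery of Wanner--Hairer--N{\o}rsett, which counts poles topologically from the structure of the set $\{z:|R_m(z)|>|e^z|\}$, or Saff--Varga's direct analysis of the zeros of the Pad\'e denominators; neither reduces to a strip maximum principle. Second, the constant $2$ is attained: already for $m=1$ the unique pole of $R_1(z)=(1+z/2)/(1-z/2)$ sits at $z=2$, on the very line $\Re z=2$ along which you propose to obtain ``sharp control of $R_m(z)/e^z$'', so no uniform bound of $R_m$ on that line can hold, and any correct argument must produce the closed half-plane $\Re(z)\le-2$ rather than an open one. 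A minor further point: identifying the poles of $R_m$ with the points $-\zeta_j$ uses that $P_m(z)$ and $P_m(-z)$ have no common zero; this is true (normality of the Pad\'e table of $e^z$) but should be stated. In summary, your first half is a valid proof of simplicity; your second half correctly names the right tools and obstacles but is a plan, not a proof, and as written the plan's central step would fail.
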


For $m\ge 1$, denote $\zeta_1,\cdots,\zeta_{m}\in\C$ the zeros of $P_m(z)$, the diagonal Pad\'e numerator of type $[m/m]$ for $e^z$, then by \eqref{pade}
\ben
P_m(z)=\frac{m!}{(2m)!}(z-\zeta_1)\cdots(z-\zeta_m),\ \ Q_m(z)=(-1)^m\frac{m!}{(2m)!}(z+\zeta_1)\cdots(z+\zeta_m).
\een
Recall that any polynomial $F\in P^{m-1}$ can be expanded as the Lagrange interpolation function at $m$ distinct zeros of $P_m(-z)$. This yields the following partial fraction formula (see, e.g., Szeg\"o \cite[Theorem 3.3.5]{Szego})
\beq\label{pade1}
\frac{F(z)}{P_m(-z)}=\sum^m_{j=1}-\frac{F(-\zeta_j)}{P_m'(\zeta_j)}\,\frac{1}{z+\zeta_j}.
\eeq
Since $P_m(z)-(-1)^mP_m(-z)\in P^{m-1}$, we obtain the partial fraction formula for the $[m/m]$ Pad\'e approximation of $e^z$ (see Gallopoulos and Saad \cite{Saad1989})
\beq\label{pade2}
R_{m,m}(z)=\frac{P_m(z)}{P_m(-z)}=(-1)^m+\sum^m_{j=1}-\frac{P_m(-\zeta_j)}{P_m'(\zeta_j)}\,\frac{1}{z+\zeta_j}.
\eeq
Recall that if $p(z)=\sum^m_{i=0}a_i z^i$ is a polynomial of degree $m$, then $p(\X):=\sum^m_{i=0}a_i\X^i$ for any matrix $\X\in\R^{d\times d}$, $d\ge 1$. Obviously, if $p(z)=p_1(z)+p_2(z)$ or $q(z)=p_1(z)\cdot p_2(z)$, where $p_1,p_2$ are polynomials, then $p(\X)=p_1(\X)+p_2(\X)$, $q(\X)=p_1(\X)\cdot p_2(\X)$. It follows now from \eqref{pade1}-\eqref{pade2} that for any $F\in P^{m-1}$ and any matrix $\X\in\R^{d\times d}$ such that $P_m(-\X)$ is invertible,
\be
& &\frac{F(\X)}{P_m(-\X)}=\sum^m_{j=1}-\frac{F(-\zeta_j)}{P_m'(\zeta_j)}(\zeta_j\I+\X)^{-1},\label{pade3}\\
& &\frac{P_m(\X)}{P_m(-\X)}=(-1)^m\I+\sum^m_{j=1}-\frac{P_m(-\zeta_j)}{P_m'(\zeta_j)}\,(\zeta_j\I+\X)^{-1}.\label{pade4}
\ee
The identity \eqref{pade4} is the basis of the method \eqref{a1} in the introduction.

\section{Parallel implementation}\label{sec_implementation}
In this section, we propose parallel algorithms to implement the problem \eqref{dp} based on finding analytic formulas of the determinant and all factors of the stiffness matrix of the continuous time Galerkin method at each time step. To form the stiffness matrix, we use the Legendre polynomials $\{\widetilde L_j\}^r_{j=0}$ in $I_n$ as the basis functions.
We assume
\begin{align*}
\bY_r(t)=\sum_{j=0}^{r}\bfa_j\widetilde{L}_j(t),\quad \mathcal{P}_{r-1}\bR=\sum_{j=0}^{r-1}\bR_j\widetilde{L}_j(t),
\end{align*}
where $\bfa_j,\bR_j\in\mathbb{R}^M$. Since $\cP_{r-1}\widetilde L_r=0$ in $I_n$, from \eqref{dp}, we have
\begin{align*}
\sum_{j=0}^{r}\bfa_j\widetilde{L}_j'(t)=\D\sum_{j=0}^{r-1}\bfa_j\widetilde{L}_j(t)+\sum_{j=0}^{r-1}\bR_j\widetilde{L}_j(t).
\end{align*}
For any $k\ge 1$, multiplying the equation by $(t-t_n)(t_{n+1}-t)\widetilde{L}'_k(t)$ and integrating over $I_n$, we obtain
\begin{align}
\bfa_k\frac{\tau_n}{2}\frac{k(k+1)}{k+\frac 12}=&\D\sum_{j=0}^{r-1}\int_{t_n}^{t_{n+1}}\bfa_j\widetilde{L}_j(t)(t-t_n)(t_{n+1}-t)\widetilde{L}_k'(t)\, dt\nonumber \\
&+\sum_{j=0}^{r-1}\int_{t_n}^{t_{n+1}}\bR_j\widetilde{L}_j(t)(t-t_n)(t_{n+1}-t)\widetilde{L}_k'(t)\, dt,\label{y1}
\end{align}
where we have used the fact that
\begin{align*}
\int_{t_n}^{t_{n+1}}\widetilde{L}_j'\widetilde{L}_k'(t-t_n)(t_{n+1}-t)\, dt=\frac{\tau_n}2\int^1_{-1}L_j'L_k'(1-t^2)dt=\frac{\tau_n}{2}\frac{k(k+1)}{k+\frac 12}\delta_{j,k}.
\end{align*}
Here $\delta_{j,k}$ is the Kronecker delta function. By the recursion relation $(2k+1)\widetilde{L}_k(t)=\frac{\tau_n}{2}(\widetilde{L}'_{k+1}(t)-\widetilde{L}'_{k-1}(t))$,
\begin{align*}
&\int_{t_n}^{t_{n+1}}\widetilde{L}_j(t)(t-t_n)(t_{n+1}-t)\widetilde{L}'_k(t)\, dt\\
=&\,\frac{1}{2j+1}\int_{t_n}^{t_{n+1}}\frac{\tau_n}{2}(\widetilde{L}'_{j+1}(t)-\widetilde{L}'_{j-1}(t))\widetilde{L}'_k(t)(t-t_n)(t_{n+1}-t)\, dt\nonumber\\
=&\,\frac{\tau_n^2}{4}\frac{k(k+1)}{k+\frac 12}\left(\frac{1}{2k-1}\delta_{j+1,k}-\frac{1}{2k+3}\delta_{j-1,k}\right).
\end{align*}
Substituting the identity into \eqref{y1}, we have
\begin{align}
&\bfa_k=\frac{\tau_n}{2}\D\left(\frac{\bfa_{k-1}}{2k-1}-\frac{\bfa_{k+1}}{2k+3}\right)+\frac{\tau_n}{2}\left(\frac{\bR_{k-1}}{2k-1}-\frac{\bR_{k+1}}{2k+3}\right),\label{coef_1} \quad 1\leq k\leq r-2,\\
&\bfa_{k}=\frac{\tau_n}{2}\D\frac{\bfa_{k-1}}{2k-1}+\frac{\tau_n}{2}\frac{\bR_{k-1}}{2k-1}, \quad k=r-1,r.\label{coef_2}
\end{align}
By the condition $\bY_r(t_{n})=\bY_r^n$, we also have
\begin{align}\label{coef_4}
\sum_{j=0}^{r}(-1)^j\bfa_j=\bY_r^n.
\end{align}
\eqref{coef_1}-\eqref{coef_4} can be written as a system of linear equations
\begin{align}\label{y2}
\A\mathbf{X}=\bB,
\end{align}
where $\mathbf{X}=(\bfa_0^T,\bfa_1^T,\cdots,\bfa_{r}^T)^T$, $\bB=(\bfb_0^T,\bfb_1^T,\cdots,\bfb_{r}^T)^T$ with 
\ben
\bfb_{k-1}=\left\{\begin{array}{ll}
-\frac{\tau_n}2\left(\frac{\bR_{k-1}}{2k-1}-\frac{\bR_{k+1}}{2k+3}\right) & \mbox{if } 1\le k\le r-2,\\
-\frac{\tau_n}2\frac{\bR_{k-1}}{2k-1} & \mbox{if } k=r-1,r, \\
\bY^n_r & \mbox{if }k=r+1,
\end{array}\right.
\een
and
\begin{align*}
\A=\left(\begin{array}{ccccccc}
\frac{\tau_n}{2}\D&-\I&-\frac{\tau_n}{2}\D\frac{1}{5}&\cdots&\cdots&\cdots&0\\
0&\frac{\tau_n}{2}\D\frac{1}{3}&-\I&-\frac{\tau_n}{2}\D\frac{1}{7}&\cdots&\cdots&0\\
\vdots&\vdots&\ddots&\ddots&\ddots&\vdots&\vdots\\
0&\cdots&\cdots&\frac{\tau_n}{2}\D\frac{1}{2r-5}&-\I&-\frac{\tau_n}{2}\D\frac{1}{2r-1}&0\\
0&\cdots&\cdots&\cdots&\frac{\tau_n}{2}\D\frac{1}{2r-3}&-\I&0\\
0&\cdots&\cdots&\cdots&\cdots&\frac{\tau_n}{2}\D\frac{1}{2r-1}&-\I\\
\I&-\I&\I&-\I&\cdots&\cdots&(-1)^{r+2}\I
\end{array}\right).
\end{align*}
Here $\I\in\mathbb{R}^{M\times M}$ is the identity matrix. By Lemma \ref{lem:2.2}, \eqref{y2} has a unique solution.
Since $\A\in \mathbb{R}^{M(r+1)\times M(r+1)}$, it is expensive to solve \eqref{y2} directly when $M\gg 1$.
Here we propose efficient and parallel algorithms to solve \eqref{y2}.

Notice that $\A$ is a $(r+1)\times (r+1)$ block matrix. For $\lambda\in\R$, we define $\E_{r+1}(\lam)\in\R^{(r+1)\times(r+1)}$ by
\begin{align}\label{defEr}
\E_{r+1}(\lambda):=\left(\begin{array}{ccccccc}
a_1&-1&b_1&\cdots&\cdots&\cdots&0\\
0&a_2&-1&b_2&\cdots&\cdots&0\\
\vdots&\vdots&\ddots&\ddots&\ddots&\vdots&\vdots\\
0&\cdots&\cdots&a_{r-2}&-1&b_{r-2}&0\\
0&\cdots&\cdots&\cdots&a_{r-1}&-1&0\\
0&\cdots&\cdots&\cdots&\cdots&a_r&-1\\
c_1&c_2&\cdots&\cdots&\cdots&c_r&c_{r+1}\\
\end{array}\right)
\end{align}
with
\beq\label{y3}
a_k=\frac \lam2\frac 1{2k-1},\ \  b_k=-\frac \lam 2\frac{1}{2k+3},\ \ c_k=(-1)^{k+1},\ \ k=1,\cdots,r+1.
\eeq
Then $\A=\E_{r+1}(\tau_n\D)$ by replacing each element $e_{ij}(\lam)$ of $\E_{r+1}(\lam)$ by the $M\times M$ matrix $e_{ij}(\tau_n\D)\I$, $i,j=1,\cdots,r+1$.

We are going to solve \eqref{y2} by extending the Cramer rule for the block matrices. We first introduce some notation. For any matrix $\X=(X_{ij})^d_{i,j=1}$, $d\ge 1$, we denote $\X_{i,j}$ the matrix obtained by removing the $i$-th row and $j$-th column, $i,j=1,\dots,d$. We also denote $\X_{(i_1,\cdots,i_k),(j_1,\cdots,j_l)}$ the matrix obtained by removing the $i_1,\cdots,i_k$-th rows and the $j_1,\cdots,j_l$-th columns, where $1\le i_1<\cdots<i_k\le d, 1\le j_1<\cdots<j_l\le d$. The following property about the adjugate matrix is well known
\beq\label{y4}
(\det\X)\delta_{i,j}=\sum^d_{k=1}(-1)^{i+k}(\det\X_{k,i})X_{kj}.
\eeq

Denote by $\mathbb{H}=\E_{r+1}(\lam)_{(r-1,r,r+1),(r-1,r,r+1)}\in\R^{(r-2)\times(r-2)}$. Then the matrix $\E_{r+1}(\lam)$ can be partitioned as
\begin{align}\label{c1}
\E_{r+1}(\lam)=\left(\begin{array}{ccc|ccc}
  & & & & & \\
 & \mathbb{H}& &b_{r-3}& 0&0 \\
 & & &-1&b_{r-2}&0\\\cline{1-3}
\multicolumn{3}{c}{ }&a_{r-1}&-1&0\\
\multicolumn{3}{c}{ }&0&a_{r}&-1\\
c_1&\cdots &\multicolumn{1}{c}{c_{r-2}}&c_{r-1}&c_{r}&c_{r+1}
\end{array}\right).
\end{align}
Similarly, we have
\begin{align}\label{c2}
\E_{r}(\lam)=\left(\begin{array}{ccc|cc}
  & & & &  \\
 & \mathbb{H}& &b_{r-3}&0 \\
 & & &-1&0\\\cline{1-3}
\multicolumn{3}{c}{ }&a_{r-1}&-1\\
c_1&\cdots &\multicolumn{1}{c}{c_{r-2}}&c_{r-1}&c_{r}
\end{array}\right),\ \
\E_{r-1}(\lam)=\left(\begin{array}{ccc|c}
  & & &   \\
 & \mathbb{H}& &0\\
 & & &-1\\\cline{1-3}
c_1&\cdots &\multicolumn{1}{c}{c_{r-2}}&c_{r-1}
\end{array}\right).
\end{align}
The following simple identities will play an important role in our analysis
\beq\label{c3}
\E_{r+1}(\lam)_{r,(r,r+1)}=\E_r(\lam)_{*,r},
\eeq
where for any $\X\in\R^{d\times d}$, we denote $\X_{*,j}\in\R^{d\times (d-1)}$ the matrix by removing the $j-$th column of $\X$. Similarly, we denote $\X_{i,*}\in\R^{(d-1)\times d}$ the matrix by removing the $i$-th row of $\X$.

The following elementary lemma is useful in our analysis.

\begin{lem}\label{lem:3.0}
For any $r\ge 3$ and $1\le j\le r-2$, we have
\begin{align}
\det[\E_{r+1}(\lam)_{(r-2,r,r+1),(j,r,r+1)}]&=-a_{r-1}\det[\E_{r-1}(\lam)_{r-1,j}]\nn\\
&=a_{r-1}c_{r-1}\det[\E_{r-1}(\lam)_{r-2,j}],\label{c4}\\
\det[\E_{r+1}(\lam)_{(r-1,r,r+1),(j,r,r+1)}]&=-\det[\E_r(\lam)_{r,j}]\nn\\
&=c_r\det[\E_r(\lam)_{r-1,j}].\label{c5}
\end{align}
\end{lem}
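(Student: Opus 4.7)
The plan is to prove both identities by cofactor expansion along a row or column that carries a single nonzero entry. In each of the five reduced matrices appearing in \eqref{c4}--\eqref{c5}, the relevant row or column turns out to have exactly one nonzero entry, so every determinant will reduce, up to an explicit prefactor, to the same ``core'' submatrix of $\E_{r+1}(\lam)$.

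For \eqref{c4}, I would let $M$ denote the determinant of the submatrix formed by the first $r-3$ rows of $\E_{r+1}(\lam)$ restricted to columns $\{1,\dots,r-2\}\setminus\{j\}$. The key sparsity facts are: (i) the last row of $\E_{r+1}(\lam)_{(r-2,r,r+1),(j,r,r+1)}$, inherited from row $r-1$ of $\E_{r+1}(\lam)$, contains only the entry $a_{r-1}$, located in its last column; (ii) the last column of $\E_{r-1}(\lam)_{r-1,j}$, inherited from column $r-1$ of $\E_{r-1}(\lam)$, contains only the entry $-1$, in its last row; and (iii) the last column of $\E_{r-1}(\lam)_{r-2,j}$ contains only $c_{r-1}$ in its last row, since the $-1$ entry now lies in the deleted row $r-2$. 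Laplace expansion along the identified row or column then gives $a_{r-1}M$, $-M$, and $c_{r-1}M$ respectively, and both equalities in \eqref{c4} follow by matching these three expressions, using $c_{r-1}^2=1$.

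For \eqref{c5} I would apply the same strategy, this time passing through $\E_r(\lam)$. A key preliminary observation is that rows $1,\dots,r-2$ of $\E_{r+1}(\lam)$ and of $\E_r(\lam)$ agree once both are restricted to columns $1,\dots,r-1$: the only potentially distinguishing entry $b_{r-2}$ of $\E_{r+1}(\lam)$ sits in column $r$ and is hence excluded. Consequently, $\E_{r+1}(\lam)_{(r-1,r,r+1),(j,r,r+1)}$ coincides with the top-left block that surfaces after expanding the determinants $\det[\E_r(\lam)_{r,j}]$ and $\det[\E_r(\lam)_{r-1,j}]$ along their last column, namely column $r$ of $\E_r(\lam)$, which carries only the single entry $-1$ in row $r-1$ or $c_r$ in row $r$. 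Denoting this common core determinant by $N$, the two expansions yield $\det[\E_r(\lam)_{r,j}]=-N$ and $\det[\E_r(\lam)_{r-1,j}]=c_rN$, and both equalities in \eqref{c5} then follow at once using $c_r^2=1$.

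The essentially only non-routine step is the bookkeeping: one must carefully track which row and column each surviving entry occupies after several simultaneous deletions, and verify that the designated row or column really does have a single nonzero entry in the asserted position. The hypothesis $1\le j\le r-2$ is used precisely here to ensure that columns $r-1$ and $r$ of the ambient matrices are not among those deleted, and hence occupy the final positions of the reduced submatrices.
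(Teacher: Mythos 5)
Your proposal is correct and follows essentially the same route as the paper: both arguments reduce each of the determinants, via a Laplace expansion along a row or column with a single nonzero entry, to the common core submatrix $\mathbb{H}_{r-2,j}$ (your $M$) for \eqref{c4} and the analogous core for \eqref{c5}, with your "agreement of rows restricted to the surviving columns" observation playing exactly the role of the paper's identity \eqref{c3} and the partitions \eqref{c1}--\eqref{c2}. The only cosmetic difference is that the paper first converts the triple minor of $\E_{r+1}(\lam)$ into a minor of $\E_r(\lam)$ before expanding, and writes out only \eqref{c4} in detail.
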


\begin{proof} We only prove \eqref{c4}. The identity \eqref{c5} can be proved similarly. By \eqref{c3},
\ben
\det[\E_{r+1}(\lam)_{(r-2,r,r+1),(j,r,r+1)}]&=&\det[\E_r(\lam)_{(r-2,r),(j,r)}]\\
&=&\det\left(\begin{array}{ccc|c}
  & & &   \\
 & \mathbb{H}_{r-2,j}& &\\
 & & &b_{r-3}\\\cline{1-3}
&&\multicolumn{1}{c}{}&a_{r-1}
\end{array}\right)\\
&=&a_{r-1}\det[\mathbb{H}_{r-2,j}].
\een
On the other hand, it is easy to see from \eqref{c3} that
\ben
\det[\E_{r-1}(\lam)_{r-1,j}]=-\det\mathbb{H}_{r-2,j},\ \ \det[\E_{r-1}(\lam)_{r-2,j}]=c_{r-1}\det[\mathbb{H}_{r-2,j}].
\een
This completes the proof.  
\end{proof}

To proceed, we note that
\beq\label{y6}
\E_2(\lam)=\left(\begin{array}{cc}
a_1 & -1 \\
1 & -1
\end{array}\right),\ \ \ \ \E_3(\lam)=\left(\begin{array}{ccc}
a_1  & -1 & 0 \\
0 & a_2 & -1 \\
1 & -1 & 1
\end{array}\right).
\eeq

\begin{lem}\label{lem:3.1}
Let $\varphi_r(\lam)=\det\E_{r+1}(\lam)$ be the determinant of $\E_{r+1}(\lam)$. Then $\varphi_1(\lambda)=1-a_1$, $\varphi_2(\lambda)=1-a_1+a_1a_2$, and
\begin{align}
\varphi_r(\lambda)=\varphi_{r-1}(\lambda)+a_ra_{r-1}\varphi_{r-2}(\lambda),\quad r\geq 3.\label{detE_1}
\end{align}
Moreover, $\varphi_r(\lambda)=P_r(-\lambda)$, where $P_r(\lambda)$ is the numerator of $[r/r]$ Pad\'{e} approximation of $e^\lambda$.
\end{lem}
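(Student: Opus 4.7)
The plan is to prove the recurrence $\varphi_r(\lam) = \varphi_{r-1}(\lam) + a_r a_{r-1} \varphi_{r-2}(\lam)$ for $r\geq 3$ by induction, and then to identify $\varphi_r(\lam)=P_r(-\lam)$ by matching against Lemma~\ref{lem:2.4}. The base cases $\varphi_1 = 1-a_1$ and $\varphi_2 = 1-a_1+a_1 a_2$ follow by direct expansion of the $2\times 2$ and $3\times 3$ matrices in \eqref{y6}. The identification with $P_r(-\lam)$ is then immediate: since $a_r a_{r-1} = \lam^2/[4(2r-1)(2r-3)]$, setting $z=-\lam$ in Lemma~\ref{lem:2.4} yields exactly the same recurrence, and the base values $1-\lam/2$ and $1-\lam/2+\lam^2/12$ agree with $P_1(-\lam)$ and $P_2(-\lam)$.

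The key algebraic fact used throughout is $b_k=-a_{k+2}$, immediate from the definitions \eqref{y3}. For the recurrence, I expand $\det\E_{r+1}(\lam)$ along its last column, which has only two non-zero entries: $-1$ in row $r$ and $c_{r+1}$ in row $r+1$. The minor at $(r+1,r+1)$ is upper triangular with diagonal $a_1,\dots,a_r$, so its cofactor contributes $c_{r+1}\,a_1\cdots a_r$. The cofactor at $(r,r+1)$, after tracking the sign $(-1)^{r+(r+1)}\cdot(-1)=+1$, contributes $\det M$, where $M$ denotes $\E_{r+1}(\lam)$ with row $r$ and column $r+1$ deleted. Comparing $M$ with $\E_r(\lam)$ entry-by-entry shows the two matrices coincide except at position $(r-2,r)$, where $M$ has $b_{r-2}$ while $\E_r(\lam)$ has $0$. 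Multilinearity in row $r-2$, with overall sign $(-1)^{(r-2)+r}=+1$, then gives
\[
\det M = \det \E_r(\lam) + b_{r-2}\,\det [\E_r(\lam)]_{r-2,r}.
\]

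The crux is evaluating $\det[\E_r(\lam)]_{r-2,r}$. After removing row $r-2$ and column $r$, the row that was originally row $r-1$ retains only the entry $a_{r-1}$ in its last remaining column. Expanding along that row yields $\det[\E_r(\lam)]_{r-2,r} = -a_{r-1}\det Q$ for an $(r-2)\times(r-2)$ sub-submatrix $Q$. A direct comparison with the block description \eqref{c2} of $\E_{r-1}(\lam)$ identifies $Q$ with $[\E_{r-1}(\lam)]_{r-2,r-1}$, which is precisely the minor arising in the same last-column expansion applied to $\E_{r-1}(\lam)$ itself. That expansion gives $\det \E_{r-1}(\lam) = \det Q + c_{r-1}\,a_1\cdots a_{r-2}$, hence
\[
\det Q = \det \E_{r-1}(\lam) - c_{r-1}\,a_1\cdots a_{r-2}.
\]

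Substituting back and using $b_{r-2}=-a_r$ together with $c_{r+1}=c_{r-1}=(-1)^r$, the contribution $-c_{r-1}a_1\cdots a_r$ produced by the $\det Q$ substitution cancels the initial $+c_{r+1}a_1\cdots a_r$, leaving the clean recurrence $\det \E_{r+1}(\lam) = \det \E_r(\lam) + a_r a_{r-1}\,\det \E_{r-1}(\lam)$. The main obstacle I anticipate is the index and sign bookkeeping through the two nested cofactor expansions, and in particular verifying the identification $Q=[\E_{r-1}(\lam)]_{r-2,r-1}$; this rests on the fact that the $b$-pattern in $\E_m$ terminates at row $m-3$, and the final cancellation depends crucially on the special relation $b_k=-a_{k+2}$ peculiar to the Legendre-basis discretization.
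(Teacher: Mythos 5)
Your proof is correct and follows essentially the same route as the paper: establish the three-term recurrence by elementary determinant manipulation hinging on the identity $b_{r-2}=-a_r$, then match it against the Pad\'e recurrence of Lemma \ref{lem:2.4} using the base cases from \eqref{y6}. The only difference is organizational --- the paper adds row $r$ to row $r-2$ to cancel $b_{r-2}$ up front and then expands along rows $r$ and $r-1$, whereas you expand along the last column and let the cancellation (between $c_{r+1}a_1\cdots a_r$ and $c_{r-1}a_1\cdots a_r$) emerge at the end; both your sign bookkeeping and the identification $Q=[\E_{r-1}(\lam)]_{r-2,r-1}$ check out.
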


\begin{proof}
The determinants of $\E_{r+1}(\lambda)$ for $r=1,2$ follow easily from \eqref{y6}. Since by \eqref{y3}, $a_r=-b_{r-2}$, we obtain by adding the $r$-th row to the $(r-2)$-th row of $\E_{r+1}(\lam)$ in \eqref{c1} and then expanding the determinant by the $r$-th row that
\ben
\varphi_r(\lam)&=&\det\left(\begin{array}{ccc|ccc}
  & & & & & \\
 & \mathbb{H}&& b_{r-3}&0& 0 \\
 & & &-1&0&-1\\\cline{1-3}
\multicolumn{3}{c}{ }&a_{r-1}&-1&0\\
\multicolumn{3}{c}{ }&0&a_{r}&-1\\
c_1&\cdots &\multicolumn{1}{c}{c_{r-2}}&c_{r-1}&c_{r}&c_{r+1}
\end{array}\right)\\
&=&\det\E_r(\lam)+a_r\det\left(\begin{array}{ccc|cc}
  & & & &  \\
 & \mathbb{H}& &b_{r-3}&0 \\
 & & &-1&-1\\\cline{1-3}
\multicolumn{3}{c}{ }&a_{r-1}&0\\
c_1&\cdots &\multicolumn{1}{c}{c_{r-2}}&c_{r-1}&c_{r}
\end{array}\right)\\
&=&\det\E_{r}(\lam)+a_ra_{r-1}\det\E_{r-1}(\lam),
\een
where in the last equality we have expanded the determinant by the $(r-1)$-th row. Since $a_1=\lam/2$, $a_2=\lam/6$,  we use Lemma \ref{lem:2.4} to conclude $\varphi_r(\lam)=P_r(-\lam)$, where $P_r(\lambda)$ is the numerator of $[r/r]$ Pad\'{e} approximation of $e^\lambda$.   
\end{proof}

\begin{thm}\label{thm:3.1}
The matrix $\varphi_{r}(\tau_n\D)$ is invertible. The solution of \eqref{y2} $\mathbf{X}=(\bfa_0^T,\cdots,\bfa_r^T)^T$ satisfies that for $i=1,\cdots,r+1$,
\ben
\bfa_{i-1}=(-1)^r\delta_{i,r+1}\bfb_{r}+\sum^r_{j=1}(\zeta_j\mathbb{I}+\tau_n\D)^{-1}\left(\sum^{r+1}_{k=1}(-1)^{i+k+1}\frac{\phi_{ki}(-\zeta_j)}{P'_r(\zeta_j)}\,\bfb_{k-1}\right),
\een
where $\phi_{ki}(\lam)=\det[\E_{r+1}(\lam)_{k,i}]$, $k,i=1,\cdots,r+1$, are the minors of $\E_{r+1}(\lam)$.
\end{thm}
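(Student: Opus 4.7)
The plan is to apply a block version of Cramer's rule (valid because every block of $\A=\E_{r+1}(\tau_n\D)$ is a polynomial in $\tau_n\D$, so all blocks commute), and then convert the resulting rational expression into partial fractions via \eqref{pade3}.

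First, I would verify invertibility. Assumption \eqref{p2} yields $u^T\D u\le 0$ for every real $u$, so every eigenvalue $\mu$ of $\D$ satisfies $\Re\mu\le 0$; combined with Lemma \ref{lem:2.5} (the zeros $\zeta_j$ of $P_r$ lie in $\Re z\le -2$), this shows that $\varphi_r(\tau_n\D)=P_r(-\tau_n\D)$ and each $\zeta_j\I+\tau_n\D$ are invertible. Because all block entries of $\A$ lie in the commutative ring $\R[\tau_n\D]$, the polynomial identity $\E_{r+1}(\lam)\,\mathrm{adj}(\E_{r+1}(\lam))=\varphi_r(\lam)I_{r+1}$ (in $\lam$) remains valid when evaluated at $\lam=\tau_n\D$, which together with \eqref{y4} gives
\beq\label{cramer_plan}
\bfa_{i-1}=\varphi_r(\tau_n\D)^{-1}\sum_{k=1}^{r+1}(-1)^{i+k}\phi_{ki}(\tau_n\D)\,\bfb_{k-1},\qquad i=1,\ldots,r+1.
\eeq

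Next, I would bound $\deg_\lam\phi_{ki}(\lam)$. The only $\lam$-linear entries of \eqref{defEr} are the $a_k$ and $b_k$ in rows $1,\ldots,r$; row $r+1$ is purely constant. Hence if $k\le r$, the minor $\E_{r+1}(\lam)_{k,i}$ still contains row $r+1$, and consequently $\deg\phi_{ki}\le r-1$. If $k=r+1$, then rows $r-1$ and $r$ of the minor each contain only a single linear entry ($a_{r-1}$ at column $r-1$ and $a_r$ at column $r$); propagating this rigidity backwards forces row $j\le r-2$ to use column $j$ if every row is to contribute a linear factor. Thus the only permutation attaining degree $r$ uses columns $\{1,\ldots,r\}$, which requires $i=r+1$, and in that case $\phi_{r+1,r+1}(\lam)=a_1\cdots a_r=\lam^r\,r!/(2r)!$. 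Comparing with the leading coefficient $(-1)^r r!/(2r)!$ of $P_r(-\lam)$, I obtain the Euclidean decomposition
\[
(-1)^{i+k}\phi_{ki}(\lam)=(-1)^r\delta_{k,r+1}\delta_{i,r+1}\,P_r(-\lam)+Q_{ki}(\lam),\qquad Q_{ki}\in P^{r-1}.
\]

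Finally, I would divide by $P_r(-\lam)$, apply the partial fraction identity \eqref{pade3} to $Q_{ki}$, and use $Q_{ki}(-\zeta_j)=(-1)^{i+k}\phi_{ki}(-\zeta_j)$ (which holds since $P_r(\zeta_j)=0$) to obtain
\[
\varphi_r(\tau_n\D)^{-1}(-1)^{i+k}\phi_{ki}(\tau_n\D)=(-1)^r\delta_{k,r+1}\delta_{i,r+1}\,\I+\sum_{j=1}^r\frac{(-1)^{i+k+1}\phi_{ki}(-\zeta_j)}{P_r'(\zeta_j)}(\zeta_j\I+\tau_n\D)^{-1}.
\]
Substituting this into \eqref{cramer_plan}, summing over $k$, and recalling that $\bfb_r=\bY_r^n$ produces the stated formula. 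The main obstacle is the degree/rigidity analysis of $\phi_{ki}$---specifically identifying that the sole minor of degree $r$ occurs at $(k,i)=(r+1,r+1)$ with leading coefficient exactly $r!/(2r)!$; the remaining steps are algebraic manipulations with polynomials and the partial fraction identity already established.
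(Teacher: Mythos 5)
Your proposal is correct and follows essentially the same route as the paper: invertibility of $\varphi_r(\tau_n\D)=P_r(-\tau_n\D)$ from Lemma \ref{lem:2.5} together with the spectrum of $\D$ lying in the left half-plane, the blockwise adjugate/Cramer identity over the commutative ring $\R[\tau_n\D]$ (the paper derives this via \eqref{y4}--\eqref{y5} and notes the equivalence with Brown's result), and the partial fraction formula \eqref{pade3} applied after observing that $\phi_{ki}\in P^{r-1}$ except at $(k,i)=(r+1,r+1)$, where $\phi_{r+1,r+1}(\lam)=\prod_{j=1}^r a_j=\frac{r!}{(2r)!}\lam^r$ matches the leading term of $(-1)^rP_r(-\lam)$. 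Your degree/rigidity analysis of the minors is spelled out in more detail than the paper's one-line assertion, but it establishes the same facts.
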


\begin{proof}
By Lemma \ref{lem:3.1}, $\varphi_r(\lam)=P_r(-\lam)=(-1)^r\frac{r!}{(2r)!}(\lam+\zeta_1)\cdots(\lam+\zeta_r)$, where $\zeta_1,\cdots,\zeta_r\in\C$ are zeros of the diagonal Pad\'e numerator of type $[r/r]$ for $e^\lam$. By Lemma \ref{lem:2.5}, $\Re(\zeta_k)\le -2$, $k=1,\cdots,r$. On the other hand, \eqref{p2} implies that the eigenvalues of $\D$ lie in the left half-plane. Thus the eigenvalues of $\zeta_k\I+\tau_n\D$, $1\le k\le r$, lie in the half-plane $\{z\in\C:\Re (z)\le -2\}$. This shows $\varphi_r(\tau_n\D)$ is invertible.

Now by \eqref{y4} we have
\ben
[\det\E_{r+1}(\lam)]\delta_{i,j}=\sum^{r+1}_{k=1}(-1)^{i+k}[\det\E_{r+1}(\lam)_{k,i}]e_{kj}(\lam),
\een
where $e_{kj}(\lam)$ is the $(k,j)$ element of $\E_{r+1}(\lam)$. By replacing $\lam$ by $\tau_n\D$ in above equality, we have
\beq\label{y5}
\varphi_r(\tau_n\D)\delta_{i,j}=\sum^{r+1}_{k=1}(-1)^{i+k}\phi_{ki}(\tau_n\D)e_{kj}(\tau_n\D).
\eeq
From \eqref{y2} we have
\ben
\sum^{r+1}_{j=1}e_{ij}(\tau_n\D)\bfa_{j-1}=\bfb_{i-1},\ \ \ i=1,\cdots,r+1.
\een
Thus multiplying \eqref{y5} by $\bfa_{j-1}$ and summing over $j$ from $1$ to $r+1$, we obtain
\be
\varphi_r(\tau_n\D)\bfa_{i-1}&=&\sum^{r+1}_{k=1}(-1)^{i+k}\phi_{ki}(\tau_n\D)\cdot\sum^{r+1}_{j=1}e_{kj}(\tau_n\D)\bfa_{j-1}\nn\\
&=&\sum^{r+1}_{k=1}(-1)^{i+k}\phi_{ki}(\tau_n\D)\bfb_{k-1}.\label{y7}
\ee
Note that for $(k,i)\not=(r+1,r+1)$, $\phi_{ki}(\lambda)\in P^m$, $m\le r-1$, by \eqref{pade3} we have
\ben
\frac{\phi_{ki}(\tau_n\D)}{\varphi_r(\tau_n\D)}=\sum^r_{j=1}-\frac{\phi_{ki}(-\zeta_j)}{P_r'(\zeta_j)}(\zeta_j\I+\tau_n\D)^{-1}.
\een
For $(k,i)=(r+1,r+1)$, we have from \eqref{defEr} that $\det[E_{r+1}(\lambda)_{r+1,r+1}]=\prod_{j=1}^{r}a_j=\frac{r!}{(2r)!}\lambda^r$. Thus $\phi_{r+1,r+1}(\lambda)-(-1)^r\varphi_r(\lambda)\in P^{r-1}$, by using \eqref{pade3} again we obtain
\ben
\frac{\phi_{r+1,r+1}(\tau_n\D)}{\varphi_r(\tau_n\D)}=(-1)^r\I+\sum^r_{j=1}-\frac{\phi_{r+1,r+1}(-\zeta_j)}{P_r'(\zeta_j)}(\zeta_j\I+\tau_n\D)^{-1}.
\een
This completes the proof of the theorem.  
\end{proof}

We remark that \eqref{y7} can also be proved by using an abstract result in Brown \cite[Theorem 2.19 and Corollary 2.21]{Brown}
where linear algebra when matrix elements are defined over a space of commuting matrices are studied.

From this theorem we know that the discrete problem \eqref{dp} can be solved by solving $r(r+1)$ linear systems of equations of order $M\times M$ in parallel once all minors of $\E_{r+1}(\lam)$ at $\lam=-\zeta_j, j=1,\cdots,r$, are known. In the following, we will find recursive formulas to computing these minors.

Let $\G_r(\lam)=\E_{r+1}(\lam)_{r,r+1}$, $r\ge 1$. The determinants of $\G_r(\lam)$ for $r=1,2$ can be calculated by \eqref{y6}. We have the following lemma for $\det\G_r(\lam)$ for $r\ge 3$.

\begin{lem}\label{lem:3.2}
For $r\ge 3$, we have
\ben
\det\G_r(\lam)=\det\G_{r-1}(\lam)-a_{r-1}b_{r-2}\det\G_{r-2}(\lam)+c_r\prod^{r-1}_{k=1}a_k.
\een
\end{lem}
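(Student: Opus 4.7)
The plan is to prove Lemma \ref{lem:3.2} by cofactor expansion of $\det\G_r(\lam)$ along its last column (column $r$), recognizing that after expansion the three resulting minors can be identified either directly with $\det\G_{r-1}(\lam)$ and $\det\G_{r-2}(\lam)$, or with a triangular matrix whose determinant is the product of the $a_k$'s.

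First I would read off the entries of column $r$ of $\G_r(\lam)$: since rows $1,\dots,r-2$ of $\G_r$ (coming from rows $1,\dots,r-2$ of $\E_{r+1}(\lam)$) have nonzero entries only in columns $k,k+1,k+2$, the last column has zeros in rows $1,\dots,r-3$ and the three nonzero entries $b_{r-2}$ in row $r-2$, $-1$ in row $r-1$ (from the $(r-1,r)$-entry of $\E_{r+1}$), and $c_r$ in row $r$ (the former $c$-row). Cofactor expansion along column $r$ therefore gives
\begin{equation*}
\det\G_r(\lam)=b_{r-2}\det(\G_r)_{r-2,r}+\det(\G_r)_{r-1,r}+c_r\det(\G_r)_{r,r},
\end{equation*}
after reading off the sign $(-1)^{i+r}$ for each term.

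Next I would identify each of the three minors. For $(\G_r)_{r,r}$, deleting the last row (the $c$-row) and the last column of $\G_r$ leaves an upper-triangular matrix of size $(r-1)\times(r-1)$ with diagonal entries $a_1,\ldots,a_{r-1}$, so its determinant equals $\prod_{k=1}^{r-1}a_k$. For $(\G_r)_{r-1,r}$, deleting row $r-1$ and column $r$ of $\G_r$ removes precisely the data $(a_{r-1},-1)$ that distinguished $\G_r$ from $\G_{r-1}$, and the resulting matrix is exactly $\G_{r-1}(\lam)$, giving $\det\G_{r-1}(\lam)$.

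The key step is the third term. In $(\G_r)_{r-2,r}$, the row that was row $r-1$ of $\G_r$ becomes a row with a single nonzero entry $a_{r-1}$ (since the $-1$ lived in the now-deleted column $r$). Expanding this minor along that row produces, up to sign $(-1)^{(r-2)+(r-1)}=-1$, the determinant of an $(r-2)\times(r-2)$ matrix. I would then check that deleting the trivial row and column leaves exactly $\G_{r-2}(\lam)$: the $c$-row truncates to $c_1,\ldots,c_{r-2}$, and in the top rows, the only effect of deleting column $r-1$ is to remove the $b_{r-3}$-entry from the old row $r-3$, making its new trailing structure match that of $\G_{r-2}$. Hence $\det(\G_r)_{r-2,r}=-a_{r-1}\det\G_{r-2}(\lam)$, and assembling the three contributions yields the stated recurrence.

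The main obstacle is bookkeeping: making sure the indices align (especially the claim that the leftover submatrix in the third term really is $\G_{r-2}$ rather than a perturbation of it) and that all the $(-1)^{i+j}$ signs in the cofactor expansions combine correctly. A quick sanity check for $r=3$, where the formula reads $\det\G_3=\det\G_2-a_2b_1\det\G_1+c_3a_1a_2=1-a_1-a_2b_1+a_1a_2$, matches the direct $3\times 3$ computation and guards against sign errors.
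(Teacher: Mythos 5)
Your proof is correct and follows essentially the same route as the paper's: both are cofactor expansions of $\det\G_r(\lam)$ exploiting its sparse band structure, identifying the resulting minors with $\det\G_{r-1}(\lam)$, $\det\G_{r-2}(\lam)$, and the triangular product $\prod_{k=1}^{r-1}a_k$. The only difference is that you expand along the last column in a single step, whereas the paper first expands along row $r-1$ and then along the last column of one of the resulting blocks; your sign bookkeeping and the $r=3$ sanity check are all in order.
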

\begin{proof} By definition and the partition in \eqref{c1}, we know that
\ben
\G_r(\lam)=\left(\begin{array}{ccc|cc}
  & & & &  \\
 & \mathbb{H}& &b_{r-3}&0 \\
 & & &-1&b_{r-2}\\\cline{1-3}
\multicolumn{3}{c}{ }&a_{r-1}&-1\\
c_1&\cdots &\multicolumn{1}{c}{c_{r-2}}&c_{r-1}&c_{r}
\end{array}\right).
\een
By expanding the determinant by the $(r-1)$-th row and use \eqref{c3}, we obtain
\ben
\det\G_r(\lam)&=&\det[\E_{r+1}(\lam)_{(r-1,r),(r,r+1)}]+a_{r-1}\det\left(\begin{array}{ccc|c}
  & & &   \\
 & \mathbb{H}& & \\
  & & & b_{r-2}\\\cline{1-3}
c_1&\cdots &\multicolumn{1}{c}{c_{r-2}}&c_{r}
\end{array}\right)\\
&=&\det[\E_{r}(\lam)_{r-1,r}]+a_{r-1}(c_r\det\mathbb{H}-b_{r-2}\det[\E_{r+1}(\lam)_{(r-2,r-1.r),(r-1,r,r+1)}])\\
&=&\det\G_{r-1}(\lam)+c_r\prod^{r-1}_{k=1}a_k-a_{r-1}b_{r-2}\det[\E_{r}(\lam)_{(r-2,r-1),(r-1,r)}]\\
&=&\det\G_{r-1}(\lam)+c_r\prod^{r-1}_{k=1}a_k-a_{r-1}b_{r-2}\det[\E_{r-1}(\lam)_{r-2,r-1}]\\
&=&\det\G_{r-1}(\lam)-a_{r-1}b_{r-2}\det\G_{r-2}(\lam)+c_r\prod^{r-1}_{k=1}a_k,
\een
where we have used the fact that $\det\mathbb{H}=\Pi^{r-2}_{k=1}a_k$ and expanded the determinant by the last column in the second equality. This completes the proof.  
\end{proof}

The minors of $\G_r(\lam)$ for $r=1,2$ can be computed directly by \eqref{y6}. The following lemma gives the recursive formulas for some of the minors of $\G_r(\lam)$ which will be used to compute the minors of $\E_{r+1}(\lam)$.

\begin{lem}\label{lem:3.3} For $r\ge 3$, we have
\ben
\det[\G_r(\lam)_{i,r}]&=&(-1)^{r-i-1}\frac{a_1\cdots a_{r-1}}{a_1\cdots a_i}\det\G_i(\lam),\ \ 1\le i\le r-1,\\
\det[\G_r(\lam)_{r-1,j}]&=&\det[\E_r(\lam)_{r-1,j}]-b_{r-2}\det[\G_{r-1}(\lam)_{r-2,j}],\ \ 1\le j\le r-1,\\
\een
\end{lem}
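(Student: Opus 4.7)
The plan is to prove the two identities using, respectively, induction on $r$ combined with Laplace expansion (for the first identity) and a rank-one perturbation viewpoint combined with determinant multilinearity (for the second). Both proofs rest on two simple structural relationships between $\G_r(\lam)$, $\G_{r-1}(\lam)$, and $\E_r(\lam)$ that can be read off directly from the definition \eqref{defEr}.

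For the first identity, I would induct on $r$, with the inductive hypothesis asserting the claim for $r-1$. The central structural fact needed is the $\G$-analogue of \eqref{c3}, namely $\G_r(\lam)_{r-1,r}=\G_{r-1}(\lam)$, which follows from comparing the two block partitions and immediately implies $\G_r(\lam)_{(i,r-1),(r-1,r)}=\G_{r-1}(\lam)_{i,r-1}$ for $1\le i\le r-2$. The case $i=r-1$ reduces to this structural identity directly, matching the right-hand side of the claim since the sign and the ratio both equal $1$. For $1\le i\le r-2$, I would apply Laplace expansion to $\det[\G_r(\lam)_{i,r}]$ along the row inherited from row $r-1$ of $\G_r$: in $\G_r$ this row has entries $(a_{r-1},-1)$ at columns $(r-1,r)$, so after removing column $r$ it has a single nonzero entry $a_{r-1}$ at the last column, and after removing row $i$ it sits at new row index $r-2$. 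The expansion yields
\ben
\det[\G_r(\lam)_{i,r}]=(-1)^{(r-2)+(r-1)}a_{r-1}\det[\G_{r-1}(\lam)_{i,r-1}]=-a_{r-1}\det[\G_{r-1}(\lam)_{i,r-1}],
\een
and feeding in the inductive hypothesis produces the claimed formula after simplifying the sign $(-1)\cdot(-1)^{r-i-2}=(-1)^{r-i-1}$.

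For the second identity, the essential observation is that $\G_r(\lam)$ and $\E_r(\lam)$ differ in exactly one entry: position $(r-2,r)$ carries $b_{r-2}$ in $\G_r(\lam)$ (inherited from row $r-2$ of $\E_{r+1}(\lam)$) but vanishes in $\E_r(\lam)$, because the pattern of $\E_m(\lam)$ places $b$-entries only on rows $1,\dots,m-3$. Since $j\le r-1<r$, the minor operations preserve this distinguished entry and merely shift it one column to the left, to position $(r-2,r-1)$ of the $(r-1)\times(r-1)$ result. Multilinearity of the determinant in row $r-2$ therefore splits
\ben
\det[\G_r(\lam)_{r-1,j}]=\det[\E_r(\lam)_{r-1,j}]+\det[\tilde M],
\een
where $\tilde M$ agrees with $\E_r(\lam)_{r-1,j}$ except that row $r-2$ is replaced by a row whose only nonzero entry is $b_{r-2}$ at the last column. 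Expanding $\det[\tilde M]$ along that sparse row contributes the sign $(-1)^{(r-2)+(r-1)}=-1$, giving $\det[\tilde M]=-b_{r-2}\det[\tilde M_{r-2,r-1}]$. I would then identify $\tilde M_{r-2,r-1}$ with $\G_{r-1}(\lam)_{r-2,j}$ by observing that both matrices equal $\E_r(\lam)$ with rows $\{r-2,r-1\}$ and columns $\{j,r\}$ removed, using the identity $\G_{r-1}(\lam)=\E_r(\lam)_{r-1,r}$.

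The main obstacle is purely combinatorial: carefully tracking how the removal of rows and columns shifts the indices of the surviving entries, in order to verify the two bookkeeping equalities $\G_r(\lam)_{(i,r-1),(r-1,r)}=\G_{r-1}(\lam)_{i,r-1}$ and $\tilde M_{r-2,r-1}=\G_{r-1}(\lam)_{r-2,j}$, as well as the single-entry discrepancy between $\G_r(\lam)$ and $\E_r(\lam)$. None of these are deep, but each requires comparing the patterns of $\E_r(\lam)$ and $\E_{r+1}(\lam)$ side by side and keeping in mind that $\E_m(\lam)$ carries $b$-entries only up to row $m-3$, so that the extra $b_{r-2}$ appearing in $\G_r$ is a genuine new entry rather than an indexing artifact.
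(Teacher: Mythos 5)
Your proof is correct. For the first identity your argument coincides with the paper's up to packaging: the paper expands the rows $r-1,r-2,\dots,i+1$ of $\G_r(\lam)_{i,r}$ in one shot, extracting the product $(-a_{r-1})\cdots(-a_{i+1})$ and landing directly on $\det\G_i(\lam)$, whereas you peel off a single factor $-a_{r-1}$ per step and close the argument by induction on $r$; both rest on the same structural identity $\G_r(\lam)_{r-1,r}=\G_{r-1}(\lam)$ (just make sure the induction bottoms out: for $r=3$, $i=1$ you invoke the claim at level $2$, which lies outside the lemma's stated range $r\ge 3$ but is again exactly this structural identity, checkable from \eqref{y6}). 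For the second identity your route is genuinely different and arguably cleaner. The paper expands $\det[\G_r(\lam)_{r-1,j}]$ along its last column, which produces a $b_{r-2}$-cofactor and a $c_r$-cofactor, and then needs identity \eqref{c5} of Lemma \ref{lem:3.0} (together with $c_r^2=1$) to recognize the $c_r$ term as $\det[\E_r(\lam)_{r-1,j}]$. Your observation that $\G_r(\lam)$ and $\E_r(\lam)$ differ in the single entry $(r-2,r)$, combined with multilinearity of the determinant in row $r-2$, produces $\det[\E_r(\lam)_{r-1,j}]$ directly and bypasses Lemma \ref{lem:3.0} entirely; it also treats the full stated range $1\le j\le r-1$ uniformly, whereas the paper's displayed partition and \eqref{c5} are written for $1\le j\le r-2$. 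Both approaches use the same elementary toolkit, so the difference is one of economy rather than substance, but your decomposition of $\G_r(\lam)$ as a rank-one (indeed single-entry) perturbation of $\E_r(\lam)$ is the tidier way to see why $\det[\E_r(\lam)_{r-1,j}]$ appears with coefficient $1$.
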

\begin{proof} For $i=1,\cdots,r-2$, by definition, we have
\ben
\det[\G_r(\lam)_{i,r}]&=&\det\left(\begin{array}{ccccccccc}
a_1&-1&b_1& & & & & &\\
&\ddots&\ddots &\ddots &&&&\\
& &a_{i-1}  &-1 & b_{i-1} &&&&\\
& & &0&a_{i+1}& -1& b_{i+1}&& \\
& & & &\ddots &\ddots &\ddots &\ddots&\\
& & & & & 0 &a_{r-3} & -1 & b_{r-3}\\
& & & && & 0 &a_{r-2} & -1\\
& & & & && & 0 & a_{r-1}\\
c_1&\cdots&\cdots&\cdots&\cdots&\cdots& c_{r-3}&c_{r-2}&c_{r-1}\\
\end{array}\right)\\
&=&(-a_{r-1})\cdots(-a_{i+1})\det\left(\begin{array}{ccccc}
a_1&-1&b_1&&\\
&\ddots &\ddots &\ddots &\\
& & a_{i-2}&-1&b_{i-2}\\
& & & a_{i-1}&-1\\
c_1&\cdots&\cdots&c_{i-1}&c_i\\
\end{array}\right)\\
&=&(-1)^{r-i-1}(\prod^{r-1}_{k=i+1}a_k)\det\G_i(\lam).
\een
Finally, for $\G(\lam)_{i,r}$, $i=r-1$, we have by the definition and using the first identity in \eqref{c3}
\ben
\G_r(\lam)_{r-1,r}=\E_{r+1}(\lam)_{(r-1,r),(r,r+1)}=\E_r(\lam)_{r-1,r}=\G_{r-1}(\lam).
\een
This shows the first equality of the lemma. To show the second equality, for any $1\le j\le r-2$, we have by using the partition \eqref{c1} that
\ben
\G_r(\lam)_{r-1,j}=\left(\begin{array}{cccccc|cc}
  & & & & & & & \\
 &&\mathbb{H}_{*,j}&&& &b_{r-3}& 0 \\
 & &&  & & &-1&b_{r-2}\\\cline{1-6}
\multicolumn{6}{c}{ }&&\\
c_1&\cdots&c_{j-1}&c_{j+1}&\cdots &\multicolumn{1}{c}{c_{r-2}}&c_{r-1}&c_{r}
\end{array}\right).
\een
Thus by expanding the determinant by the last column, we have by using \eqref{c3}, \eqref{c5} and the definition of $\G_{r-1}(\lam)$ that
\ben
& &\det[\G_r(\lam)_{r-1,j}]\\
&=&c_r\det[\E_{r+1}(\lam)_{(r-1,r,r+1),(j,r,r+1)}]-b_{r-2}\det[\E_{r+1}(\lam)_{(r-2,r-1,r),(j,r,r+1)}]\\
&=&\det[\E_r(\lam)_{r-1,j}]-b_{r-2}\det[\E_r(\lam)_{(r-2,r-1),(j,r)}]\\
&=&\det[\E_r(\lam)_{r-1,j}]-b_{r-2}\det[\G_{r-1}(\lam)_{r-2,j}].
\een
This completes the proof.  
\end{proof}

The minors of $\E_{r+1}(\lam)$ for $r=1,2$ can be easily computed from \eqref{y6}. The following theorem gives the recursive formulas for computing all minors of $\E_{r+1}(\lam)$ for $r\ge 3$.

\begin{thm}\label{thm:3.2}
Let $r\ge 3$, we have \\
$1^\circ$ For $i=1,\cdots,r-2$,
\ben
& &\det[\E_{r+1}(\lam)_{i,j}]=\det[\E_{r}(\lam)_{i,j}]+a_ra_{r-1}\det[\E_{r-1}(\lam)_{i,j}],\ \ 1\le j\le r-2,\\
& &\det[\E_{r+1}(\lam)_{i,j}]=(-1)^{j-i-1}\frac{a_1\cdots a_{j-1}}{a_1\cdots a_i}\det\G_i(\lam),\ \ j=r-1,r,r+1.
\een
$2^\circ$ For $i=r-1$, we have
\ben
& &\det[\E_{r+1}(\lam)_{r-1,j}]=-c_{r+1}a_r\det[\E_r(\lam)_{r,j}]+\det[\G_r(\lam)_{r-1,j}],\ \ 1\le j\le  r-1,\\
& &\det[\E_{r+1}(\lam)_{r-1,j}]=(-1)^{j-i-1}\frac{a_1\cdots a_{j-1}}{a_1\cdots a_i}\det\G_i(\lam),\ \ j=r,r+1.
\een
$3^\circ$ For $i=r$, we have
\ben
& &\det[\E_{r+1}(\lam)_{r,j}]=\det[\E_{r}(\lam)_{r-1,j}]-a_{r-1}b_{r-2}\det[\E_{r-1}(\lam)_{r-2,j}],\ \ 1\le j\le  r-2,\\
& &\det[\E_{r+1}(\lam)_{r,j}]=c_{r+1}(-1)^{i-j}\prod^{j-1}_{k=1}a_k,\ \ j=r-1,r,\\
& &\det[\E_{r+1}(\lam)_{r,r+1}]=\det\G_{r-1}(\lam)-a_{r-1}b_{r-2}\det\G_{r-2}(\lam)+c_r\prod^{r-1}_{k=1}a_k.
\een
$4^\circ$ For $i=r+1$, we have
\ben
& &\det[\E_{r+1}(\lam)_{r+1,j}]=-\det[\E_{r}(\lam)_{r,j}]-a_{r-1}b_{r-2}\det[\E_{r-1}(\lam)_{r-1,j}],\ \ 1\le j\le r-2,\\
& &\det[\E_{r+1}(\lam)_{r+1,j}]=(-1)^{i-j}\prod^{j-1}_{k=1}a_k,\ \ j=r-1,r,r+1.
\een
\end{thm}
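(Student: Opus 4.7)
The plan is to derive each identity by exploiting the sparse staircase structure of $\E_{r+1}(\lam)$ displayed in the partitions \eqref{c1}--\eqref{c2} and performing targeted row operations and cofactor expansions, in the spirit of the proofs of Lemmas \ref{lem:3.1}, \ref{lem:3.2}, and \ref{lem:3.3}. The identities split naturally into two families: two-term recursions when the removed column index $j$ lies in the ``interior'' range $1 \le j \le r-2$ (extended to $j \le r-1$ in case $2^\circ$), and closed-form product formulas when $j$ sits among the last three indices $r-1, r, r+1$.

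For the recursion family, I would mimic the proof of Lemma \ref{lem:3.1} inside the submatrix. Since $a_r + b_{r-2} = 0$ by \eqref{y3}, adding the $a_r$-row of $\E_{r+1}(\lam)_{i,j}$ to the $b_{r-2}$-row annihilates the $b_{r-2}$ entry and creates a $-1$ in the last column. When $j \le r-2$, the columns involved in this manipulation are all present in the submatrix, so the operation carries over unchanged. Expansion along the $a_r$-row then splits the determinant into two minors: one matches $\det[\E_r(\lam)_{*,j}]$ directly, and the other, after one further expansion along the last column (picking up the factor $a_{r-1}$), matches $\det[\E_{r-1}(\lam)_{*,j}]$. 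This produces the $+\,a_r a_{r-1}\,\det[\E_{r-1}(\lam)_{i,j}]$ term in case $1^\circ$ and its analogues in cases $3^\circ$--$4^\circ$; the shifted indices $\E_r(\lam)_{r-1,j}$ vs.\ $\E_r(\lam)_{r,j}$ on the right-hand side reflect whether the removed row was the $a_r$-row or the $c$-row. In case $2^\circ$ the same trick yields $\G_r$ in place of $\E_{r-1}$ because the row $r-1$ is now absent from the submatrix; this matches the mechanism in Lemma \ref{lem:3.3}. Finally, $\det[\E_{r+1}(\lam)_{r,r+1}]$ in case $3^\circ$ is a direct restatement of Lemma \ref{lem:3.2}, since $\E_{r+1}(\lam)_{r,r+1}=\G_r(\lam)$ by definition.

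For the closed-form family, after removing row $i$ and a column $j$ near the lower-right corner, the rows originally indexed $i+1, i+2,\dots,j-1$ become single-nonzero-entry rows in the submatrix (the $-1$ and $b_k$ entries of these rows lie in removed or about-to-be-removed columns). Sequential cofactor expansions along these rows accumulate the product $a_{i+1}\cdots a_{j-1}$ together with cofactor signs that telescope to $(-1)^{j-i-1}$. Comparing the residual matrix after this cascade with the partition \eqref{c2} identifies it as $\G_i(\lam)$ when $i \le r-1$, which gives the formula with factor $(a_1\cdots a_{j-1})/(a_1\cdots a_i)\,\det\G_i(\lam)$; and as the scalar $1$ when $i \in \{r, r+1\}$ (the $c$-row having already been consumed by the cascade), which gives the closed forms $(-1)^{i-j}\prod_{k=1}^{j-1}a_k$.

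The main obstacle will be the sign-and-index bookkeeping across the iterated expansions: verifying that the accumulated cofactor signs $(-1)^{p+q}$ combine correctly to the stated $(-1)^{j-i-1}$ or $(-1)^{i-j}$ factors in every sub-case. Additional care is needed at boundary values: when $j=r-2$ the row-addition in the first family touches the last column of the submatrix and must be separately checked; the cases $i \in \{r-1, r\}$ require identifying which staircase row is replaced by the $c$-row in the residual submatrix (accounting for the $\det[\E_r(\lam)_{r-1,j}]$ vs.\ $\det[\E_r(\lam)_{r,j}]$ shift); and the base case $r=3$ must be verified directly against the explicit formulas \eqref{y6}. These are all routine determinant computations once the general expansion pattern is in place.
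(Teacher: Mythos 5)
Your proposal follows essentially the same route as the paper: exploiting the staircase partition \eqref{c1}--\eqref{c2}, the cancellation $a_r+b_{r-2}=0$ from Lemma \ref{lem:3.1}, cascaded cofactor expansions along sparse rows/columns (which is exactly the content of Lemmas \ref{lem:3.0} and \ref{lem:3.3} that the paper invokes), and the identification $\E_{r+1}(\lam)_{r,r+1}=\G_r(\lam)$ for the Lemma \ref{lem:3.2} case. The only caveat is that in case $3^\circ$ the $a_r$-row is itself the removed row, so the literal row-addition trick is unavailable there and one must instead expand along the last columns (as the paper does via Lemma \ref{lem:3.0}); since your plan already includes such column expansions and flags the $i\in\{r-1,r\}$ cases for special care, this is a matter of bookkeeping rather than a gap.
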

\begin{proof} The proof is divided into 4 steps. \\ {\sc Step 1}. The first equality in $1^\circ$ can be proved by the same argument as that in Lemma \ref{lem:3.1}. Here we omit the details. We only prove the second equality in $1^\circ$ when $j=r+1$. The other cases can be proved similarly.
By the partition in \eqref{c1}, we know that for $1\le i\le r-2$,
\ben
\E_{r+1}(\lam)_{i,r+1}=\left(\begin{array}{ccc|cc}
  & & & & \\
 &\mathbb{H}_{i,*}& &\ \ \ \ \ \mathbb{F}_{i,*}& \hskip-2mm\\
 & & & &\\\cline{1-3}\cline{4-5}
\multicolumn{3}{c}{}&a_{r-1}&-1\\
\multicolumn{3}{c}{} &0&a_{r}\\
c_1&\cdots &\multicolumn{1}{c}{c_{r-2}}&c_{r-1}&c_{r}
\end{array}\right),\ \ \ \
\mathbb{F}=\left(\begin{array}{ccc}
0 & 0& 0\\
\vdots &\ \vdots &\vdots\\
0 & 0 & 0\\
b_{r-3} &0 &0\\
-1&b_{r-2} & 0\\
\end{array}\right),
\een
where $\mathbb{F}\in\R^{(r-2)\times 3}$. By expanding the determinant first by the $r$-th and then by the $(r-1)$-th row, we know by \eqref{c3} that
\ben
\det[\E_{r+1}(\lam)_{i,r+1}]&=&a_ra_{r-1}\det[\E_{r+1}(\lam)_{(i,r-1,r),(r-1,r,r+1)}]\\
&=&a_ra_{r-1}\det[\E_{r}(\lam)_{(i,r-1),(r-1,r)}]\\
&=&a_ra_{r-1}\det[\G_{r-1}(\lam)_{i,r-1}].
\een
This shows the second equality in $1^\circ$ by the first identity in Lemma \ref{lem:3.3}.

{\sc Step 2}. We only prove the first equality in $2^\circ$ when $1\le j\le r-2$. The other cases can be proved similarly. By the partition in \eqref{c1}, we have for $1\le j\le r-2$,
\ben
\E_{r+1}(\lam)_{r-1,j}=\left(\begin{array}{ccc|ccc}
  & & & & & \\
 & \mathbb{H}_{*,j}& &b_{r-3}& 0&0 \\
 & & &-1&b_{r-2}&0\\\cline{1-3}
\multicolumn{3}{c}{ }&0&a_{r}&-1\\
c_1&\cdots &\multicolumn{1}{c}{c_{r-2}}&c_{r-1}&c_{r}&c_{r+1}
\end{array}\right)
\een
Expanding the determinant by the last column, we obtain
\ben
\det[\E_{r+1}(\lam)_{r-1,j}]
&=&c_{r+1}a_r\det[\E_{r+1}(\lam)_{(r-1,r,r+1),(j,r,r+1)}]+\det[\E_{r+1}(\lam)_{(r-1,r),(j,r+1)}]\\
&=&-c_{r+1}a_r\det[\E_r(\lam)_{r,j}]+\det[\G_{r}(\lam)_{r-1,j}].
\een
This shows the first equality in $2^\circ$ for $1\le j\le r-2$.

{\sc Step 3.} The second equality in $3^\circ$ can be easily proved. The last equality is shown in Lemma \ref{lem:3.2} since $\E_{r+1}(\lam)_{r,r+1}=\G_r(\lam)$ by definition. To show the first equality in $3^\circ$, we again use the partition in \eqref{c1} to obtain for $1\le j\le r-2$,
\ben
\E_{r+1}(\lam)_{r,j}=\left(\begin{array}{cccccc|ccc}
  & && & & & & &\\
 &&\mathbb{H}_{*,j}&&& &b_{r-3}& 0&0 \\
 & &&  & & &-1&b_{r-2}&0\\\cline{1-6}
\multicolumn{6}{c}{ }&a_{r-1}&-1&0\\
c_1&\cdots&c_{j-1}&c_{j+1}&\cdots &\multicolumn{1}{c}{c_{r-2}}&c_{r-1}&c_{r}&c_{r+1}
\end{array}\right).
\een
By expanding the determinant successively by the last columns, we have
\ben
& &\det[\E_{r+1}(\lam)_{r,j}]\\
&=&-c_{r+1}\det[\E_{r+1}(\lam)_{(r-1,r,r+1),(j,r,r+1}]-c_{r+1}b_{r-2}\det[\E_{r+1}(\lam)_{(r-2,r,r+1),(j,r,r+1)}].
\een
This shows the first equality in $3^\circ$ by Lemma \ref{lem:3.0}.

{\sc Step 4.} The first equality in $4^\circ$ can be proved by the same argument as that Step 3. The second equality can be easily proved. Here we omit the details.  
\end{proof}

This theorem indicates that all minors of $\E_{r+1}(\lam)$ can be computed once one knows all minors of $\E_m(\lam)$, $1\le m\le r$, and the minors $\det[\G_r(\lam)_{r-1,j}]$, $1\le j\le r-1$, which can be computed by Lemma \ref{lem:3.3} recursively based on the information of the minors $E_m(\lam)$, $1\le m\le r$.

The following lemma indicates that the nodal values of the solution to \eqref{dp} depends only on the coefficient $\bfa_0$.

\begin{lem}\label{lem:3.4}
Let $\bY_r(t)$, $r\ge 1$,  be the solution of the problem \eqref{dp}. Then
\begin{align*}
\bY_r(t_{n+1})=\bY_r(t_n)+\tau_n \D \bfa_0+\tau_n \bR_0,\ \ \ n=1,\cdots, N-1.
\end{align*}
\end{lem}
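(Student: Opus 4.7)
The plan is to integrate the governing equation \eqref{dp} directly over the interval $I_n$ and exploit the orthogonality of Legendre polynomials to constants. Since the fundamental theorem of calculus gives
\ben
\bY_r(t_{n+1})-\bY_r(t_n)=\int_{I_n}\bY_r'(t)\,dt=\int_{I_n}\D\cP_{r-1}\bY_r\,dt+\int_{I_n}\cP_{r-1}\bR\,dt,
\een
everything reduces to computing the two integrals on the right-hand side in the Legendre expansion.

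First I would write $\cP_{r-1}\bY_r=\sum_{j=0}^{r-1}\bfa_j\widetilde L_j$, using the fact that $\cP_{r-1}\widetilde L_r=0$ in $I_n$ (which was already used in the derivation of \eqref{y1}). Next, I would recall the basic property that $\widetilde L_j$ is orthogonal to the constant function $\widetilde L_0\equiv 1$ on $I_n$ for every $j\ge 1$, so that $\int_{I_n}\widetilde L_j\,dt=0$ for $j\ge 1$ while $\int_{I_n}\widetilde L_0\,dt=\tau_n$. Applying this to both expansions gives
\ben
\int_{I_n}\cP_{r-1}\bY_r\,dt=\tau_n\bfa_0,\qquad\int_{I_n}\cP_{r-1}\bR\,dt=\tau_n\bR_0.
\een
Substituting these into the displayed identity above yields exactly the desired formula $\bY_r(t_{n+1})=\bY_r(t_n)+\tau_n\D\bfa_0+\tau_n\bR_0$.

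There is essentially no obstacle here: the statement is a direct consequence of integrating \eqref{dp} and using the $L^2$-orthogonality of the shifted Legendre basis to constants. The only point worth being careful about is making sure one justifies pulling $\D$ out of the integral (it is a constant matrix) and noting that the scaling factor when passing from $L_j$ on $(-1,1)$ to $\widetilde L_j$ on $I_n$ does not change the orthogonality with respect to constants.
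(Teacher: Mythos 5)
Your proof is correct and follows essentially the same route as the paper: integrate \eqref{dp} over $I_n$, expand $\cP_{r-1}\bY_r$ and $\cP_{r-1}\bR$ in the shifted Legendre basis, and use orthogonality to constants so that only the $j=0$ coefficients survive, giving $\tau_n\D\bfa_0+\tau_n\bR_0$. No gaps.
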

\begin{proof}
We integrate \eqref{dp} over $I_n$ and use the orthogonality of Legendre polynomials to obtain
\begin{align*}
\bY_r(t_{n+1})&=\bY_r(t_n)+\int_{I_n}\D\,\left(\sum_{j=0}^{r-1}\bfa_j\widetilde{L}_j(t)\right)\, dt+\int_{I_n} \sum_{j=0}^{r-1}\bR_j\widetilde{L}_j(t)\, dt,\\
&=\bY_r(t_n)+\tau_n \D \bfa_0+\tau_n \bR_0.
\end{align*}
This completes proof.  
\end{proof}

By Theorem \ref{thm:3.1}, we have then
\ben
\bY_r(t_{n+1})=\bY_r(t_n)+\sum^r_{j=1}(\zeta_j\I+\tau_n\D)^{-1}\left(\sum^{r+1}_{k=1}(-1)^k\frac{\phi_{k1}(-\zeta_j)}{P_r'(\zeta_j)}(\tau_n\D)\,\bfb_{k-1}\right)+\tau_n\bR_0.
\een
This leads to the following parallel algorithm to compute the nodal values of the solution $\bY_r$ to the problem \eqref{dp}.

\medskip
\begin{alg}\label{aa1} Given $\bY_r(t_0)=\bY_0$. For $n=1,\cdots, N-1$, do the following.\\
$1^\circ$ Compute $\bv_j\in\R^M$, $j=1,\cdots,r$, in parallel, where
 \begin{align*}
 \bv_j=\sum^{r+1}_{k=1}(-1)^k\frac{\phi_{k1}(-\zeta_j)}{P_r'(\zeta_j)}(\tau_n\D)\,\bfb_{k-1}.
 \end{align*}
$2^\circ$ Solve $(\tau_n \D+\zeta_j\I)\mathbf{w}_{j}=\bv_j$, $j=1,\cdots,r$, in parallel.\\
$3^{\circ}$ Compute
  \begin{align*}
  \bY_r(t_{n+1})=\bY_r(t_n)+\sum_{j=0}^{r}\mathbf{w}_j+\tau_n \bR_0.
  \end{align*}
\end{alg}

The following parallel-in-time algorithm computes the solution of the problem \eqref{dp} inside each time interval.

\medskip
\begin{alg}\label{aa2} Given $\bY_r(t_0)=\bY_0$. \\
$1^\circ$ Call Algorithm \ref{aa1} to obtain $\bY_r(t_n)$, $n=1,\ldots,N$.\\
$2^\circ$ Compute the coefficients $\bfa_1,\cdots,\bfa_r$ of $\bY_r$ in each time interval $I_n$, $n=1,\cdots,N-1$, in parallel as follows. \\
{\rm (i)} Compute $\bv_{ij}\in\R^M$, $i=2,\cdots,r+1$, $j=1,\cdots r$, in parallel, where
\ben
\bv_{ij}=\sum^{r+1}_{k=1}(-1)^{i+k+1}\frac{\phi_{ki}(-\zeta_j)}{P_r'(\zeta_j)}\,\bfb_{k-1}.
\een
{\rm (ii)} Solve $(\tau_n\D+\zeta_j\I)\mathbf{w}_{ij}=\bv_{ij}$, $i=2,\cdots,r+1,j=1,\cdots,r$, in parallel.\\
{\rm (iii)} Compute $\bfa_{i-1}=(-1)^r\delta_{i,r+1}\bfb_{r}+\sum_{j=1}^r\mathbf{w}_{ij}$, $i=2,\cdots,r+1$, in parallel.
\end{alg}
\medskip

\begin{rem}\label{rem1}
In \cite{Saad1989}, it is observed that the zeros of $P_r(z)$ come in complex conjugate pairs if they are complex. If $\zeta_{j'}=\bar\zeta_j$, $j,j'=1,\cdots,r$, then $\bv_j=\bar{\bv}_{j'}$, and
\ben
\bw_j+\bw_{j'}=\frac{\bv_j}{\tau_n\D+\zeta_j\I}+\frac{\bar\bv_j}{\tau_n\D+\bar\zeta_j\I}=2\,\Re\,\left[\frac{\bv_j}{\tau_n\D+\zeta_j\I}\right].
\een
Thus one need only to solve $k$ complex matrix problems instead of $2k$ in Algorithm \ref{aa1} {\rm ($2^\circ$)} and $kr$ complex matrix problems instead of $2kr$ in Algorithm \ref{aa2} {\rm ($2^\circ$)}, where $2k$, $0\le k\le r/2$, are the number of complex zeros of $P_r(z)$.
\end{rem}

\begin{rem}\label{rem2}
If $\zeta=a+\i b$ is a complex zero of $P_r(z)$, then $a\le -2$ by Lemma \ref{lem:2.4}. By Remark \ref{rem1}, without loss of generality, we can choose one of the zeros such that $b<0$. Let $\bw=\bw_1+\i\bw_2$, $\bv=\bv_1+\i\bv_2$, where $\bv_i,\bw_i\in\R^M$, $i=1,2$, satisfy $(\tau_n\D+\zeta\I)\bw=\bv$. Then
\ben
\widetilde{\D}\left(\begin{array}{c}
\bw_1 \\
\bw_2 \\
\end{array}\right):=\left(\begin{array}{cc}
\tau_n\D+a\I & -b\,\I\\
-b\,\I & -(\tau_n\D+a\I)\\
\end{array}\right)\left(\begin{array}{c}
\bw_1 \\
\bw_2 \\
\end{array}\right)=\left(\begin{array}{c}
\bv_1 \\
-\bv_2 \\
\end{array}\right).
\een
Let $\mathbb{F}={\rm diag}(\tau_n\D+(a+b)\,\I,\tau_n\D+(a+b)\,\I)\in\R^{2M\times 2M}$ be the diagonal matrix. It is shown in Chen et al \cite[Lemma 4.1]{Chen} that the condition number $\kappa(\mathbb{F}^{-1}\widetilde{\D})\le\sqrt 2$. Therefore, the complex system $(\tau_n\D+\zeta\I)\bw=\bv$ can be efficiently solved if one has the efficient solver for the real matrix
$\tau_n\D+(a+b)\,\I$, where $a+b\le -2$. Notice that the eigenvalues of $\D$ lie in the left half-plane due to the assumption $\D+\D^T\le 0$.
\end{rem}

\begin{rem} \label{rem3}
For non-standard ODE systems of the form,
\begin{align}\label{equ_ODE_nonstandard}
\bM\bY'=\D\bY+\bR\ \ \text{ in } (0,T), \quad \bY(0)=\bY_0,
\end{align}
one can use the transformations $\widetilde{\bY}=\bM^{\frac 12}\bY$, $\widetilde{\D}=\bM^{-\frac 12}\D\bM^{-\frac 12}$, $\widetilde{\bR}=\bM^{-\frac 12}\bR$ to transform the problem \eqref{equ_ODE_nonstandard} to \eqref{p1} and use above algorithms to solve the transformed problem. This leads to the following algorithm which is similar to Algorithm \ref{aa1} to find the nodal values of the solution of the continuous time Galerkin method for solving \eqref{equ_ODE_nonstandard}. A similar algorithm to Algorithm \ref{aa2} can also be formulated.

\medskip
\begin{alg}\label{aa4} Given $\bY_r(t_0)=\bY_0$. For $n=1,\cdots, N-1$, do the following.\\
$1^\circ$ Compute $\bv_j\in\R^M$, $j=1,\cdots,r$, in parallel, where
 \begin{align*}
 \bv_j=\sum^{r+1}_{k=1}(-1)^k\frac{\phi_{k1}(-\zeta_j)}{P_r'(\zeta_j)}(\tau_n\D)\mathbb{M}^{-1}\,\bfb_{k-1}.
 \end{align*}
$2^\circ$ Solve $(\tau_n \D+\zeta_j\mathbb{M})\mathbf{w}_{j}=\bv_j$, $j=1,\cdots,r$, in parallel.\\
$3^{\circ}$ Compute
  \begin{align*}
  \bY_r(t_{n+1})=\bY_r(t_n)+\sum_{j=1}^{r}\mathbf{w}_j+\tau_n\mathbb{M}^{-1}\bR_0.
  \end{align*}
\end{alg}
\end{rem}

To conclude this section, we prove the following theorem for finding the nodal values of the solution \eqref{dp} which extends \eqref{a1} for solving the ODE system \eqref{p1} when $\bR=\bf{0}$.

\begin{thm}\label{thm:3.3}
Let $\bY_r\in\bV^r_\tau$, $r\ge 1$, be the solution of the problem \eqref{dp}. Then for $n=1,\cdots,N-1$,
\ben
\bY_r(t_{n+1})=\frac{P_r(\tau_n\D)}{P_r(-\tau_n\D)}\bY_r(t_n)+\sum^r_{k=1}(-1)^{k+1}\frac{\phi_{k1}(\tau_n\D)}{P_r(-\tau_n\D)}\,\bfb_{k-1}+\tau_n\bR_0,
\een
where $\phi_{k1}(\lam)=\det[\E_{r+1}(\lam)_{k,1}]$.
\end{thm}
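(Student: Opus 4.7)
The plan is to use Lemma \ref{lem:3.4} (which gives $\bY_r(t_{n+1})=\bY_r(t_n)+\tau_n\D\bfa_0+\tau_n\bR_0$) and to eliminate $\bfa_0$ through identity \eqref{y7} from the proof of Theorem \ref{thm:3.1}. With $i=1$, and using $\bfb_r=\bY_r(t_n)$, \eqref{y7} reads
\[
P_r(-\tau_n\D)\bfa_0=\sum_{k=1}^{r}(-1)^{k+1}\phi_{k1}(\tau_n\D)\bfb_{k-1}+(-1)^r\phi_{r+1,1}(\tau_n\D)\bY_r(t_n).
\]
Multiplying Lemma \ref{lem:3.4} by $P_r(-\tau_n\D)$ on both sides, substituting this expression for $\tau_n\D P_r(-\tau_n\D)\bfa_0$ (polynomials in $\tau_n\D$ commute), and collecting terms shows that the coefficient of $\bY_r(t_n)$ becomes $P_r(-\tau_n\D)+(-1)^r\tau_n\D\,\phi_{r+1,1}(\tau_n\D)$, while the remaining terms are explicit polynomial-in-$\tau_n\D$ combinations of the $\bfb_{k-1}$ and $\bR_0$.

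The decisive step is then the scalar polynomial identity
\[
P_r(\lam)-P_r(-\lam)=(-1)^r\lam\,\phi_{r+1,1}(\lam),\qquad r\ge 1,
\]
which, since $\varphi_r(\lam)=P_r(-\lam)$ by Lemma \ref{lem:3.1}, rewrites the coefficient $P_r(-\tau_n\D)+(-1)^r\tau_n\D\phi_{r+1,1}(\tau_n\D)$ exactly as $P_r(\tau_n\D)$. I would prove this identity by induction on $r$. The base cases $r=1,2$ follow from direct calculation with the matrices in \eqref{y6}: $\phi_{2,1}(\lam)=-1$ and $\phi_{3,1}(\lam)=1$ give the correct values. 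For the inductive step, the first identity of item $4^\circ$ of Theorem \ref{thm:3.2}, specialized to $j=1$, reads
\[
\phi_{r+1,1}(\lam)=-\det[\E_{r}(\lam)_{r,1}]-a_{r-1}b_{r-2}\det[\E_{r-1}(\lam)_{r-1,1}],
\]
and the two minors on the right are, by definition, exactly the $\phi_{r+1,1}$-type quantities for the problems of one and two sizes smaller. Substituting $a_{r-1}=\lam/[2(2r-3)]$ and $b_{r-2}=-\lam/[2(2r-1)]$ gives $a_{r-1}b_{r-2}=-\lam^{2}/[4(2r-1)(2r-3)]$, which is exactly the weight appearing in the three-term recursion for $P_r$ in Lemma \ref{lem:2.4}. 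Multiplying the recursion for $\phi_{r+1,1}$ through by $(-1)^r\lam$ and invoking the induction hypothesis reproduces the same recursion for $P_r(\lam)-P_r(-\lam)$, closing the induction.

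Once the polynomial identity is established, divide the assembled equation by $P_r(-\tau_n\D)$, which is invertible by the spectral argument from Theorem \ref{thm:3.1} (Lemma \ref{lem:2.5} together with $\D+\D^T\le 0$); the stated formula follows. The main obstacle is the inductive matching of the two three-term recurrences: one must carefully track the sign $(-1)^r$, confirm that the product of the explicit values of $a_{r-1}$ and $b_{r-2}$ produces exactly the $-\lam^{2}/[4(2r-1)(2r-3)]$ factor of Lemma \ref{lem:2.4}, and keep straight that $\det[\E_{r}(\lam)_{r,1}]$ and $\det[\E_{r-1}(\lam)_{r-1,1}]$ are the $\phi_{\cdot,1}$ minors for smaller-size problems rather than unrelated determinants. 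Everything else is bookkeeping with identities already at hand.
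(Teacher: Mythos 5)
Your proposal is correct and follows essentially the same route as the paper: eliminate $\bfa_0$ via Lemma \ref{lem:3.4} and the Cramer-type identity \eqref{y7} with $i=1$, then identify $(-1)^r\lam\,\phi_{r+1,1}(\lam)=P_r(\lam)-P_r(-\lam)$ by matching the three-term recursion from Theorem \ref{thm:3.2}, $4^\circ$ (with $a_{r-1}b_{r-2}=-\lam^2/[4(2r-1)(2r-3)]$) against Lemma \ref{lem:2.4}, using the base cases from \eqref{y6}. The signs and base values you report ($\phi_{2,1}=-1$, $\phi_{3,1}=1$, both giving $\psi_r(\lam)=\lam$) check out, so the argument is complete.
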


\begin{proof}
By Lemma \ref{lem:3.1} and \eqref{y7} we have
\ben
P_r(-\tau_n\D)\bfa_0=\sum^{r+1}_{k=1}(-1)^{k+1}\phi_{k1}(\tau_n\D)\bfb_{k-1}.
\een
Since $\bfb_{r+1}=\bY_r(t_n)$, by Lemma \ref{lem:3.4},
\ben
\bY_r(t_{n+1})&=&\left[\I+(-1)^r\frac{\tau_n\D\phi_{r+1,1}(\tau_n\D)}{P_r(-\tau_n\D)}\right]\bY_r(t_n)\\
& &+\sum^r_{k=1}(-1)^{k+1}\phi_{k1}(\tau_n\D)\bfb_{k-1}+\tau_n\bR_0.
\een
Denote by $\psi_r(\lam)=(-1)^r\lam\phi_{r+1,1}(\lam)=(-1)^r\lam\det[\E_{r+1}(\lam)_{r+1,1}]$. By Theorem \ref{thm:3.2}, $4^\circ$, we know that $\psi_r(\lam)$ satsifies
\ben
\psi_r(\lam)=\psi_{r-1}(\lam)+\frac{\lam^2}4\frac 1{(2r-1)(2r-3)}\psi_{r-2}(\lam),\ \ r\ge 3.
\een
On the other hand, by \eqref{y6}, we have $\psi_1(\lam)=\lam,\psi_2(\lam)=\lam$. This implies by Lemma \ref{lem:2.4} that $\psi_r(\lam)=P_r(\lam)-P_r(-\lam)$. Thus
\ben
\I+(-1)^r\frac{\tau_n\D\phi_{r+1,1}(\tau_n\D)}{P_r(-\tau_n\D)}=\I+\frac{P_r(\tau_n\D)-P_r(-\tau_n\D)}{P_r(-\tau_n\D)}=P_r(\tau_n\D).
\een
This completes the proof.  
\end{proof}

\section{The dissipative system}
In this section, we propose an alternative way to compute the coefficients $\bfa_1,\cdots,\bfa_r$ of the solution $\bY_r$ of the problem \eqref{dp} when the ODE system \eqref{p1} is dissipative $\D+\D^T<0$. The algorithm is based on the block tridiagonal structure of the matrix and is less expensive than the step $2^\circ$ in Algorithm \ref{aa2}.

Let $\widetilde{\mathbf{X}}=(\bfa_1^T,\cdots,\bfa_r^T)^T\in\R^{rM}$ and $\widetilde{\bB}=(\widetilde{\bfb}^T_1,\bfb_2^T,\cdots,\bfb_r^T)^T$ with $\widetilde{\bfb}_1=\bfb_1-\frac 12(\tau_n\D)\bfa_0$. It follows from \eqref{y2} that $\widetilde{\mathbf{X}}\in\R^{rM}$ satisfies
\beq\label{z1}
\widetilde{\A}\widetilde{\mathbf{X}}=\widetilde{\bB},
\eeq
where
\ben
\widetilde{\A}=\left(\begin{array}{cccccc}
-\I&-\frac{\tau_n}{2}\D\frac{1}{5}&\cdots&\cdots&\cdots&0\\
\frac{\tau_n}{2}\D\frac{1}{3}&-\I&-\frac{\tau_n}{2}\D\frac{1}{7}&\cdots&\cdots&0\\
\vdots&\ddots&\ddots&\ddots&\vdots&\vdots\\
\cdots&\cdots&\frac{\tau_n}{2}\D\frac{1}{2r-5}&-\I&-\frac{\tau_n}{2}\D\frac{1}{2r-1}&0\\
\cdots&\cdots&\cdots&\frac{\tau_n}{2}\D\frac{1}{2r-3}&-\I&0\\
\cdots&\cdots&\cdots&\cdots&\frac{\tau_n}{2}\D\frac{1}{2r-1}&-\I\\
\end{array}\right).
\een
It is easy to see that $\widetilde{\A}=\E_{r+1}(\tau_n\D)_{r+1,1}$, let $\phi_r(\lam)=\det[\E_{r+1}(\lam)_{r+1,1}]$. The goal of this section is to show that $\phi_r(\tau_n\D)\in\R^{M\times M}$ is invertible so that the standard chasing algorithm for block tridiagonal matrices (cf., e.g., Golub and Van Load \cite[\S4.5]{Golub}) can be used to solve \eqref{z1}.

We start with the following lemma.

\begin{lem}\label{lem:4.1}
Let $u_n\in P^n$, $n\ge 0$, such that {\rm (i)} $u_0=0, u_1=1$ or {\rm (ii)} $u_0=1,u_1=1+A_1t$, $A_1\in\R$, and for $n\ge 2$,
\beq
u_n(t)=(1+A_nt)u_{n-1}(t)-C_nt^2u_{n-2},\label{z2}
\eeq
where $A_n\in\R, C_n>0$ for $n\ge 2$. Then we have, for $n\ge 2$,
\ben
u_n(u_{n-1}+tu'_{n-1})-tu_{n-1}u'_n>0\ \ \ \mbox{in }\R\backslash\{0\}.
\een
\end{lem}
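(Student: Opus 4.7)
The plan is to reduce the lemma to a simple induction by establishing the clean recursive identity
\[
D_n(t) := u_n(u_{n-1} + tu_{n-1}') - tu_{n-1}u_n' = u_{n-1}^2 + C_n t^2\, D_{n-1}(t) \qquad (n\ge 2),
\]
with the convention $D_1 := u_1(u_0+tu_0') - tu_0u_1'$. Once this is in hand, the inequality $D_n>0$ on $\R\setminus\{0\}$ follows immediately by induction from the positivity of $C_n$.

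To derive the identity, I would substitute the three-term recurrence \eqref{z2} together with its derivative $u_n' = A_n u_{n-1} + (1+A_n t)u_{n-1}' - 2C_n t u_{n-2} - C_n t^2 u_{n-2}'$ directly into $D_n$ and expand. The terms proportional to $A_n t u_{n-1}^2$ cancel in pairs, as does the contribution $(1+A_n t)t u_{n-1} u_{n-1}'$, leaving
\[
D_n = u_{n-1}^2 + C_n t^2 u_{n-1}u_{n-2} - C_n t^3\bigl(u_{n-2}u_{n-1}' - u_{n-1}u_{n-2}'\bigr).
\]
The key observation is that $u_{n-1}u_{n-2} - t(u_{n-2}u_{n-1}' - u_{n-1}u_{n-2}')$ is exactly $D_{n-1}$, so the last two terms collapse to $C_n t^2 D_{n-1}(t)$, giving the claimed identity. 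This algebraic bookkeeping is the one nontrivial step; the mechanism is that the recurrence \eqref{z2} contains the specific combination $C_n t^2 u_{n-2}$, which is precisely what is needed for the Wronskian-type cross terms to reassemble into the lower-order $D_{n-1}$.

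For the base case I compute $D_1$ directly. In case (i), $u_0=0$ forces $D_1\equiv 0$, so the identity yields $D_2 = u_1^2 + C_2t^2\cdot 0 = 1$, positive on all of $\R$. In case (ii), the contributions $(1+A_1t)$ and $-tA_1$ cancel to give $D_1\equiv 1$, hence $D_2 = (1+A_1t)^2 + C_2 t^2$, which is strictly positive for $t\ne 0$ since $C_2>0$. The inductive step is then immediate: if $D_{n-1}(t)>0$ for every $t\ne 0$, then for such $t$ we have $C_n t^2 D_{n-1}(t)>0$ (using $C_n>0$) while $u_{n-1}(t)^2\ge 0$, and therefore $D_n(t)>0$. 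This closes the induction and completes the proof.
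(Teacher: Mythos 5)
Your proof is correct and rests on the same key identity as the paper's: your recursion $D_n = u_{n-1}^2 + C_n t^2 D_{n-1}$ is precisely the diagonal ($t'\to t$) limit of the paper's two-variable relation $G_n(t,t') = u_{n-1}(t)u_{n-1}(t') + C_n t t'\, G_{n-1}(t,t')$ for the divided-difference kernel $G_n(t,t')=\bigl(t'u_n(t)u_{n-1}(t')-tu_{n-1}(t)u_n(t')\bigr)/(t'-t)$. The only difference is in execution — the paper derives the identity without differentiating and then unwinds it into an explicit sum of squares, while you obtain the diagonal version directly from the recurrence and its derivative and close by induction — and your base-case treatment of $D_1\equiv 0$ versus $D_1\equiv 1$ in the two cases correctly handles the one point where the bare induction would otherwise stall.
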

\begin{proof} We set, for $n\ge1$,
\ben
G_n(t,t')=\frac{t'u_n(t)u_{n-1}(t')-tu_{n-1}(t)u_n(t')}{t'-t}.
\een
It is easy to see by \eqref{z2} that $G_1(t,t')=u_0$, and for $n\ge 2$,
\ben
G_n(t,t')=u_{n-1}(t)u_{n-1}(t')+C_ntt'G_{n-1}(t,t').
\een
This implies easily
\ben
G_n(t,t')=u_{n-1}(t)u_{n-1}(t')+\sum^{n-2}_{k=0}C_n\cdots C_{k+2}(tt')^{n-1-k}u_k(t)u_k(t'),
\een
where we have used $G_1(t,t')=u_0(t)u_0(t')$ in both cases (i) and (ii).
By letting $t'\to t$, we obtain for $n\ge 2$,
\ben
\lim_{t'\to t}G_n(t,t')=u_n(t)[tu_{n-1}(t)]'-tu_{n-1}(t)u_n'(t)>0\ \ \mbox{in }\R\backslash\{0\},
\een
where we have used the condition $u_1=1$ in the case (i) and $u_0=1$ in the case (ii). This completes the proof.  
\end{proof}

The following theorem is the main result of this section.

\begin{thm}\label{thm:4.1}
Let $\D\in\R^{M\times M}$ satisfy $\D+\D^T<0$, that is, $\D+\D^T$ is negative definite. Then the matrix $\phi_r(\tau_n\D)\in\R^{M\times M}$, $r\ge 1$, $1\le n\le N-1$, is invertible.
\end{thm}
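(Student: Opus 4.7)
The overall strategy is to show that every zero of the real polynomial $\phi_r(\lam)$ is purely imaginary. Once that is established, invertibility follows immediately: the hypothesis $\D+\D^T<0$ implies every eigenvalue $\eta$ of $\D$ satisfies $\Re\eta<0$ (by the standard quadratic-form argument, splitting a complex eigenvector into real and imaginary parts and using that $\D+\D^T$ is negative definite on $\R^M$), so no eigenvalue of $\tau_n\D$ is purely imaginary, and by the spectral mapping theorem $\phi_r(\tau_n\D)$ has no zero eigenvalue.

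To bring Lemma~\ref{lem:4.1} into play, I would first derive a three-term recursion for $\phi_r$. The computation in the proof of Theorem~\ref{thm:3.3} already yields $(-1)^r\lam\phi_r(\lam)=\psi_r(\lam):=P_r(\lam)-P_r(-\lam)$, and Lemma~\ref{lem:2.4} gives $\psi_1=\psi_2=\lam$ together with $\psi_r=\psi_{r-1}+\frac{\lam^2}{4(2r-1)(2r-3)}\psi_{r-2}$ for $r\ge 3$. Translating back one obtains $\phi_1=-1$, $\phi_2=1$, and
\[
\phi_r(\lam)=-\phi_{r-1}(\lam)+\tfrac{\lam^2}{4(2r-1)(2r-3)}\,\phi_{r-2}(\lam),\qquad r\ge 3.
\]
Substituting $\lam=it$ and defining $u_r(t):=(-1)^r\phi_r(it)\in\R[t]$ produces $u_1=u_2=1$ and $u_r=u_{r-1}-\frac{t^2}{4(2r-1)(2r-3)}u_{r-2}$ for $r\ge 3$. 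With the shift $U_n:=u_{n+1}$ this is exactly case~(ii) of Lemma~\ref{lem:4.1} with $A_n\equiv 0$ and strictly positive $C_n$, so the lemma supplies
\[
W_r(t):=u_r(t)\bigl[u_{r-1}(t)+tu_{r-1}'(t)\bigr]-tu_{r-1}(t)u_r'(t)>0,\quad t\in\R\setminus\{0\},\ r\ge 3,
\]
while the recursion evaluated at $t=0$ gives $u_r(0)=1$ for every $r$, hence $W_r(0)=u_r(0)u_{r-1}(0)=1$ as well.

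The heart of the proof is an induction on $r$ showing that $u_r$ has exactly $\deg u_r$ distinct real zeros. The cases $r=1,2$ are trivial as $u_r\equiv 1$. For $r\ge 3$, assume the claim for $r-1,r-2$ and consider $\rho_r(t):=u_r(t)/[tu_{r-1}(t)]$; a direct computation gives $\rho_r'(t)=-W_r(t)/[tu_{r-1}(t)]^2<0$ wherever defined, so $\rho_r$ is strictly decreasing on every interval where $tu_{r-1}\neq 0$. By the inductive hypothesis, $tu_{r-1}$ has $\deg u_{r-1}+1$ simple real zeros (the zeros of $u_{r-1}$ together with $t=0$), and $W_r>0$ at each of them forces $u_r$ to be nonzero there, so $\rho_r$ has a vertical asymptote at every zero of $tu_{r-1}$. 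On each of the $\deg u_{r-1}$ bounded intervals between consecutive zeros of $tu_{r-1}$, $\rho_r$ decreases from $+\infty$ to $-\infty$ and must cross zero exactly once, contributing $\deg u_{r-1}$ real zeros of $u_r$. On the two unbounded intervals, the ratio of leading coefficients of $u_r$ and $u_{r-1}$ (which the recursion shows has opposite signs when $r$ is odd and the same sign when $r$ is even) determines the limit of $\rho_r$ at $\pm\infty$: for $r$ odd $\rho_r\to\mp\infty$ as $t\to\pm\infty$, giving one extra zero on each unbounded piece; for $r$ even $\rho_r\to 0$ monotonically without changing sign, giving none. In either case the total count equals $\deg u_r$, closing the induction.

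Since $u_r$ then has only real zeros, the zeros of $\phi_r(\lam)=(-1)^r u_r(-i\lam)$ are exactly $\{it_0:u_r(t_0)=0\}$, all purely imaginary, and the conclusion follows from the opening paragraph. The main obstacle is the last counting step on the two unbounded intervals: Lemma~\ref{lem:4.1} supplies the monotonicity of $\rho_r$ on each interval cleanly, but the behaviour of $\rho_r$ at $\pm\infty$ is not encoded in the Wronskian itself and requires the separate parity-dependent leading-coefficient analysis sketched above.
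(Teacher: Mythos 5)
Your proof is correct, and it shares the paper's overall strategy: both reduce Theorem~\ref{thm:4.1} to showing that every zero of $\phi_r$ lies on the imaginary axis (after which $\D+\D^T<0$ and the spectral mapping theorem finish the job), both extract the three-term recursion for $\phi_r$ from Theorem~\ref{thm:3.2} ($4^\circ$), and both use Lemma~\ref{lem:4.1} as the positivity input for an interlacing count. Where you differ is in the technical realization. The paper substitutes $t=\lam^2$, splits $\psi_r=(-1)^r\phi_r$ by parity into $f_m(t)=\psi_{2m}$ and $g_m(t)=\psi_{2m+1}$, derives three-term recursions of the Lemma~\ref{lem:4.1} form (with $A_m\ne 0$) for each subsequence, and counts $m$ negative real zeros of $f_m$ by exhibiting $f_m,f_{m-1},\dots,f_1$ as a Sturmian sequence on $[-M,-\delta]$ and counting sign variations. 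You instead substitute $\lam=it$, keep the single real sequence $u_r(t)=(-1)^r\phi_r(it)$, which after an index shift fits case (ii) of Lemma~\ref{lem:4.1} with $A_n\equiv 0$, and count real zeros of $u_r$ via the strictly decreasing rational function $\rho_r=u_r/(tu_{r-1})$. Your variant handles both parities of $r$ in one induction and replaces the appeal to Sturm's theorem by an elementary pole-counting argument; the price you pay is the bookkeeping at $\pm\infty$. On that last point, the ``main obstacle'' you flag is in fact unnecessary: a strictly decreasing function on $(c_{\max},\infty)$ cannot tend to $+\infty$ at $+\infty$, so for odd $r$, where the degree count gives $|\rho_r|\to\infty$ at infinity, monotonicity alone forces $\rho_r\to-\infty$ on the right (and symmetrically $\rho_r\to+\infty$ on the left), while for even $r$ the degrees give $\rho_r\to 0$ and monotonicity pins its sign on each unbounded interval. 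The parity-dependent leading-coefficient analysis can therefore be dropped entirely, which makes your route, if anything, slightly cleaner than the Sturm-sequence count in the paper.
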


\begin{proof} We are going to show that all zeros of $\phi_r(\lam)$ locate at the imaginary axis $\{z\in\C:\Re(z)=0\}$. This implies easily $\phi_r(\tau_n\D)$ is invertible since the eigenvalues of $\D$ lie in the left-half plane due to the dissipative property $\D+\D^T<0$.

To study the property of the zeros of $\phi_r(\lam)$, we denote $\psi_r(\lam)=(-1)^r\phi_r(\lam)$. By \eqref{y6} and Theorem \ref{thm:3.2} ($4^\circ$) we know that $\psi_1(\lam)=1,\psi_2(\lam)=1$, and
\beq\label{z3}
\psi_r(\lam)=\psi_{r-1}(\lam)+d_{r}\lam^2\psi_{r-2}(\lam),\ \ \ d_r=\frac 1{4(2r-1)(2r-3)},\ \ \ \mbox{for }r\ge 3.
\eeq
We note that \eqref{z3} is also valid for $r=2$ if we define $\psi_0(\lam)=0$. Set $t=\lam^2$ and for $m\ge 0$, define $f_m(t)=\psi_{2m}(\lam), g_m=\psi_{2m+1}(\lam)$. Then
\ben
& &f_m(t)=g_{m-1}(t)+d_{2m}tf_{m-1}(t),\ \ f_0(t)=0,f_1(t)=1,\\
& &g_m(t)=f_{m}(t)+d_{2m+1}tg_{m-1}(t),\ \ g_0(t)=1,g_1(t)=1+d_3t.
\een
This implies that, for $m\ge 2$,
\begin{align}
&f_m(t)=(1+A_mt)f_{m-1}(t)-C_mt^2f_{m-2}(t),\label{z4}\\
&g_m(t)=(1+\tilde{A}_mt)g_{m-1}(t)-\tilde{C}_mt^2f_{m-2}(t),\label{z6}
\end{align}
where $A_m=d_{2m}+d_{2m-1}, C_m=d_{2m-1}d_{2m-2}, \tilde{A}_m=d_{2m+1}+d_{2m}, \tilde{C}_m=d_{2m}d_{2m-1}$.
We observe that $f_m,g_m$ satisfy the same recurrence relation but with different coefficients and initial values. In the following we will only prove $f_m$, $m\ge 2$, has $m$ real zeros in $(-\infty,0)$ which then implies that $\phi_{2m}(\lam)=f_m(\lam^{1/2})$ has all zeros on the imaginary axis. The proof for $\phi_{2m+1}(\lam)$, $m\ge 1$, is similar and we omit the details.

We extend the argument in \cite[\S3.3 (4)]{Szego} for orthogonal polynomials to show that $f_m, m\ge 2$, has $m$ zeros in $(-\infty,0)$ by using Sturm theorem (cf., e.g., Perron \cite[pp.7-9]{Perron}) based on the recurrence formula \eqref{z4}. We first note that if $f_m(t)=\sum^m_{k=0}\theta_kt^k$, then $\theta_m>0$ since by \eqref{z3} the leading coefficients of the polynomial $\psi_r(\lam)$ are positive, and $\theta_0=f_m(0)=1$ by \eqref{z4}. Now we claim that
\beq\label{z5}
f_m(t),f_{m-1}(t),\cdots,f_1(t)
\eeq
form a Sturmian sequence in $[-M,-\delta]$ for sufficiently large $M>0$ and sufficiently small $\delta>0$ in the following sense. (i) $f_1(t)=1$ has no zeros in $[-M,-\delta]$. (ii) $f_m(-M)f_m(-\delta)\not=0$ for $M\gg 1$ and $\delta\ll 1$ since $\theta_m>0$ and $\theta_0=1$. (iii) If $c\in [-M,-\delta]$ is a zero of $f_k(t)$, $1\le k\le m-1$, then $f_{k+1}(c)f_{k-1}(c)<0$. In fact, By Lemma \ref{lem:4.1}, $f_{k-1}(c)\not=0$. By \eqref{z4}, $f_{k+1}(c)=-C_mc^2f_{k-1}(c)$. (iv) If $c\in [-M,-\delta]$ such that $f_m(c)=0$, then $f_m'(c)f_{m-1}(c)>0$, which is a direct consequence of Lemma \ref{lem:4.1}. Now the number of variations of sign in \eqref{z5} at $t=-M$ is $m$ for sufficiently large $M$ since $\theta_m>0$; it is zero at $t=-\delta$ for sufficiently small $\delta>0$ since $\theta_0=1$. Thus by Sturm theorem we conclude that $f_m(t)$ has exactly $m$ zeros in $[-M,-\delta]$. This completes the proof.  
\end{proof}

Based on this theorem, we can use the following parallel-in-time algorithm to compute the solution $\bY_r$ to the problem \eqref{dp}.

\medskip
\begin{alg}\label{aa3} Given $\bY_r(t_0)=\bY_0$. \\
$1^\circ$ Call Algorithm \ref{aa1} to obtain $\bY_r(t_n)$ for $n=1,\ldots,N$.\\
$2^\circ$ Compute $\bY_r$ in each time interval $I_n$, $n=1,\cdots,N-1$, in parallel by solving \eqref{z1} to obtain $\bfa_1,\cdots,\bfa_r$ using the LU decomposition for block tridiagonal matrices.
\end{alg}
\medskip

We remark that the algorithm of $LU$ decomposition for block tridiagonal matrices for solving \eqref{z1} requires to solve $r$ systems of linear equations of size $M$ in sequential instead of to solve $r^2$ systems of linear equations of size $M$ in parallel in Step $2^\circ$ of Algorithm \ref{aa2}.

\section{Optimal stability and error esstimates}\label{sec_hyp}
In this section, we show optimal stability and error estimates of the continuous time Galerkin method \eqref{dp} in terms of $r$ when $\D$ is a symmetric or skew-symmetric matrix. This will be achieved by using the explicit formulas in Theorem \ref{thm:3.1}. We start by studying further properties of the minors of the stiffness matrix of the continuous time Galerkin method $\mathbb{A}=\E_{r+1}(\tau_n\D)$.

Let $\chi_{r+1,j}(\lam)=(-1)^{r+1}\det[\E_{r+1}(\lam)_{r+1,j}]$, then by \eqref{y6} we have
\beq\label{k1}
\chi_{2,1}=-1,\chi_{2,2}=a_1,\chi_{3,1}=-1,\chi_{3,2}=a_1,\chi_{3,3}=-a_1a_2.
\eeq
For $r\ge 3$, by $4^{\circ}$ in Theorem \ref{thm:3.2}, we have the following recursive formulas
\begin{align}
&\chi_{r+1,j}(\lam)=\chi_{r,j}(\lam)+a_{r}a_{r-1}\chi_{r-1,j}(\lam),\ \ 1\le j\le r-2,\label{k2}\\
&\chi_{r+1,j}(\lam)=(-1)^{j}\prod^{j-1}_{k=1}a_k,\ \ j=r-1,r,r+1.\label{k3}
\end{align}
Let $\varphi_0=1$ and $\varphi_r(\lam)=\det\E_{r+1}(\lam)$, $r\ge 1$. Then by Lemma \ref{lem:3.1}, $\varphi_1=1-a_1$,
\be\label{k4}
\varphi_{r+1}=\varphi_r+a_ra_{r+1}\varphi_{r-1},\ \ r\ge 1.
\ee

\begin{lem}\label{lemma_app1}
For any $r\ge1$ and $\lam\in\R$, we have
\begin{align}
&\sum_{j=1}^{r-1}[\chi_{r,j}(-\lam)\chi_{r+1,j}(\lam)+\chi_{r,j}(\lam)\chi_{r+1,j}(-\lam)]\frac{1}{2j-1}\nn\\
&=\varphi_{r-1}(-\lam)\varphi_r(\lam)+\varphi_{r-1}(\lam)\varphi_r(-\lam)+\frac{(-1)^r2r}{2r-1}(a_1\cdots a_{r-1})^2,\label{eqn_app1}\\
&\sum_{j=1}^{r+1}\chi_{r+1,j}(-\lam)\chi_{r+1,j}(\lam)\frac{1}{2j-1}=\varphi_{r}(-\lam)\varphi_r(\lam)+\frac{(-1)^{r+1}2r}{2r+1}(a_1\cdots a_r)^2.\label{eqn_app2}
\end{align}
\end{lem}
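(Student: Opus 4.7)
The plan is to prove \eqref{eqn_app1} and \eqref{eqn_app2} simultaneously by induction on $r$, using the recurrences \eqref{k2}--\eqref{k4}. The base cases $r=1,2$ can be verified directly from \eqref{k1} and the closed forms $\varphi_0=1$, $\varphi_1=1-a_1$, $\varphi_2=1-a_1+a_1a_2$. Throughout I will use that $a_k(-\lam)=-a_k(\lam)$, so $a_r a_{r-1}$ is even in $\lam$ and $(a_1\cdots a_m)(\lam)(a_1\cdots a_m)(-\lam)=(-1)^m(a_1\cdots a_m)^2$. Denote by $S_m(\lam)$ and $T_m(\lam)$ the left-hand sides of \eqref{eqn_app2} and \eqref{eqn_app1} at index $m$.

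To establish \eqref{eqn_app2} at index $r$, I split $\sum_{j=1}^{r+1}$ into $\sum_{j=1}^{r-2}+\sum_{j=r-1}^{r+1}$. In the low block I substitute $\chi_{r+1,j}(\pm\lam)=\chi_{r,j}(\pm\lam)+a_r a_{r-1}\chi_{r-1,j}(\pm\lam)$ from \eqref{k2} and expand the product. Completing each of the three resulting partial sums to its natural upper limit, with boundary corrections supplied by the closed form \eqref{k3}, yields
\[
S_{r-1}(\lam)+a_r a_{r-1}\,T_{r-1}(\lam)+(a_r a_{r-1})^2\,S_{r-2}(\lam)
\]
plus explicit residues at $j=r-2,r-1$. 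The top block $\sum_{j=r-1}^{r+1}$ is evaluated in closed form from \eqref{k3}. Invoking the inductive hypothesis on $S_{r-1},T_{r-1},S_{r-2}$ and the recurrence $\varphi_r=\varphi_{r-1}+a_{r-1}a_r\varphi_{r-2}$ of \eqref{k4}, the $\varphi$-bilinear terms assemble exactly into $\varphi_r(-\lam)\varphi_r(\lam)$, while the remaining algebraic residue collapses to $\tfrac{(-1)^{r+1}2r}{2r+1}(a_1\cdots a_r)^2$ after applying the identity $a_r a_{r-1}(a_1\cdots a_{r-2})^2=\tfrac{2r-3}{2r-1}(a_1\cdots a_{r-1})^2$ (immediate from $a_k=\lam/[2(2k-1)]$) together with the arithmetic simplification $2(r-2)(2r+1)+4=2r(2r-3)$.

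The proof of \eqref{eqn_app1} proceeds by the same template: split $\sum_{j=1}^{r-1}$ at $j=r-2$, apply \eqref{k2} in the low part, and use the coincidence $\chi_{r+1,r-1}=\chi_{r,r-1}=(-1)^{r-1}\prod_{k=1}^{r-2}a_k$ (both given by \eqref{k3}) to evaluate the single boundary term, which contributes $2\,\chi_{r,r-1}(\lam)\chi_{r,r-1}(-\lam)/(2r-3)$. The expansion expresses $T_r(\lam)$ as $2S_{r-1}(\lam)+a_r a_{r-1}\,T_{r-1}(\lam)$ plus explicit boundary corrections, and after applying the inductive hypothesis and \eqref{k4} the $\varphi$-bilinear piece becomes $\varphi_{r-1}(-\lam)\varphi_r(\lam)+\varphi_{r-1}(\lam)\varphi_r(-\lam)$, while the polynomial residue simplifies via the same $a_r a_{r-1}(a_1\cdots a_{r-2})^2$ identity to $\tfrac{(-1)^r 2r}{2r-1}(a_1\cdots a_{r-1})^2$.

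The main obstacle is the bookkeeping of residues and boundary corrections: each use of the inductive hypothesis drops in a term of the form $(-1)^k(a_1\cdots a_m)^2$ with a rational coefficient, and these must be reconciled with the explicit boundary contributions at $j=r-2,r-1,r,r+1$ and the sign rules dictated by $a_k(-\lam)=-a_k(\lam)$. The observation that makes the final cancellation transparent is that every non-$\varphi$ term can be rewritten as a rational multiple of $(a_1\cdots a_{r-1})^2$ or $(a_1\cdots a_r)^2$ via the single substitution $a_r/a_{r-1}=(2r-3)/(2r-1)$; once this is done, the closing step is a one-line arithmetic check.
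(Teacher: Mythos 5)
Your proposal is correct and follows essentially the same route as the paper: a simultaneous induction on $r$ for \eqref{eqn_app1} and \eqref{eqn_app2}, driven by the recurrences \eqref{k2}--\eqref{k4}, with the boundary terms (in particular the coincidence $\chi_{r+1,r-1}=\chi_{r,r-1}$) evaluated from the closed form \eqref{k3}. The only difference is organizational --- the paper tracks the differences $A_r$, $B_{r+1}$ between the two sides and derives recurrences for them, while you expand the sums directly --- but the underlying computation and the key identities are the same.
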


\begin{proof}
We denote
\ben
A_{r}:&=&\sum_{j=1}^{r-1}\left[\chi_{r,j}(-\lam)\chi_{r+1,j}(\lam)+\chi_{r,j}(\lam)\chi_{r+1,j}(-\lam)\right]\frac{1}{2j-1}\\
& &-\left[\varphi_{r-1}(-\lam)\varphi_r(\lam)+\varphi_{r-1}(\lam)\varphi_r(-\lam)\right],\\
B_{r+1}:&=&\sum_{j=1}^{r+1}\chi_{r+1,j}(-\lam)\chi_{r+1,j}(\lam)\frac{1}{2j-1}-\varphi_{r}(-\lam)\varphi_r(\lam).
\een
We will argue by induction. First \eqref{eqn_app1}-\eqref{eqn_app2} are obvious for $r=1,2$ by \eqref{k1}. Now we assume \eqref{eqn_app1}-\eqref{eqn_app2} are valid for all $r\leq n$, $n\ge 2$. Since by \eqref{k3}, $\chi_{n+2,n}(\lam)=\chi_{n+1,n}(\lam)$, we have by \eqref{k2} and \eqref{k4} that
\ben
A_{n+1}=2B_{n+1}-2\chi_{n+1,n+1}(-\lam)\chi_{n+1,n+1}(\lam)\frac{1}{2n+1}+a_na_{n+1}A_n.
\een
Now by \eqref{k3} and the induction assumption that \eqref{eqn_app1}-\eqref{eqn_app2} are valid for $r=n$, we obtain
\ben
A_{n+1}=\frac{(-1)^{n+1}2(n+1)}{2n+1}(a_1\cdots a_n)^2.
\een
This shows \eqref{eqn_app1} for $r=n+1$. Similarly, we can prove by \eqref{k2}-\eqref{k4} that 
\ben
B_{n+2}&=&B_{n+1}+(a_na_{n+1})^2B_n+a_na_{n+1}A_n\\
& &+\chi_{n+2,n+2}(-\lam)\chi_{n+2,n+2}(\lam)\frac 1{2n+3}-(a_na_{n+1})^2\chi_{n,n}(-\lam)\chi_{n,n}(\lam)\frac{1}{2n-1}.
\een
Now by the induction assumption \eqref{eqn_app2} for $r=n,n+1$ and \eqref{eqn_app1} for $r=n$, we obtain by using \eqref{k3} that
\ben
B_{n+2}=(-1)^{n+2}\frac{2(n+1)}{2n+3}(a_1\cdots a_{n+1})^2.
\een
This completes the proof.  
\end{proof}

\begin{lem}\label{lem:5.2}
Let $r\ge 1$. For any $\lam\le 0$, we have
\beq\label{k5}
\sum^{r-1}_{j=1}\chi_{r,j}(\lam)\chi_{r+1,j}(\lam)\frac 1{2j-1}\le\varphi_{r-1}(\lam)\varphi_r(\lam),\ \ \sum^{r+1}_{j=1}\chi_{r+1,j}(\lam)^2\frac 1{2j-1}\le\varphi_r(\lam)^2.
\eeq
\end{lem}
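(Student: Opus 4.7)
The plan is to establish both inequalities simultaneously by induction on $r$, mimicking the algebraic bookkeeping in the proof of Lemma \ref{lemma_app1} but tracking signs rather than exact equalities. Define the discrepancies
\begin{align*}
A_r(\lambda) &:= \sum_{j=1}^{r-1}\chi_{r,j}(\lambda)\chi_{r+1,j}(\lambda)\frac{1}{2j-1} - \varphi_{r-1}(\lambda)\varphi_r(\lambda),\\
B_{r+1}(\lambda) &:= \sum_{j=1}^{r+1}\chi_{r+1,j}(\lambda)^2\frac{1}{2j-1} - \varphi_r(\lambda)^2,
\end{align*}
so that the lemma reduces to showing $A_r(\lambda)\le 0$ and $B_{r+1}(\lambda)\le 0$ for every $\lambda\le 0$.

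The first task will be to derive closed recursions for $A_r$ and $B_{r+1}$. I would use \eqref{k2} to expand $\chi_{n+2,j}^2 = (\chi_{n+1,j}+a_n a_{n+1}\chi_{n,j})^2$ for $1\le j\le n-1$, treat the three boundary indices $j=n,n+1,n+2$ via the closed forms in \eqref{k3}, and exploit the collapse $\chi_{n+2,n+2}^2 = (a_n a_{n+1})^2\chi_{n,n}^2$ to obtain recursions for the raw sums $\tilde A_{n+1}$ and $\tilde B_{n+2}$. Subtracting the expansions $\varphi_{n+1}^2 = \varphi_n^2 + 2a_n a_{n+1}\varphi_n\varphi_{n-1} + (a_n a_{n+1})^2\varphi_{n-1}^2$ and $\varphi_n\varphi_{n+1} = \varphi_n^2 + a_n a_{n+1}\varphi_n\varphi_{n-1}$ coming from \eqref{k4} should then produce the clean identities
\begin{align*}
B_{n+2} &= B_{n+1} + 2a_n a_{n+1}\,A_n + (a_n a_{n+1})^2 B_n - \frac{4(a_1\cdots a_{n+1})^2}{(2n+3)(2n-1)},\\
A_{n+1} &= B_{n+1} + a_n a_{n+1}\,A_n - \frac{(a_1\cdots a_n)^2}{2n+1}.
\end{align*}

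With these recursions the sign analysis is immediate: for $\lambda\le 0$ every $a_k = \lambda/(2(2k-1))$ is nonpositive, so $a_n a_{n+1}\ge 0$, $(a_n a_{n+1})^2\ge 0$, and the trailing constants are nonpositive. Hence $A_n,B_n,B_{n+1}\le 0$ propagate to $A_{n+1},B_{n+2}\le 0$, and the base cases $B_1 = \chi_{1,1}^2-\varphi_0^2 = 0$, $A_1 = -\varphi_0\varphi_1 = a_1-1\le 0$, and $B_2 = 1+a_1^2/3-(1-a_1)^2 = 2a_1(1-a_1/3)\le 0$ are straightforward from \eqref{k1}. The main obstacle I anticipate is the algebraic bookkeeping in deriving the two recursions: after expanding $\chi_{n+2,j}^2$ one is left with residual boundary contributions at $j=n,n+1,n+2$, whose treatment relies on \eqref{k3}, and only through the identity $\chi_{n+2,n+2}^2 = (a_n a_{n+1})^2\chi_{n,n}^2$ do they collapse into the single manifestly nonpositive remainder displayed above. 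Once the recursions are in hand the induction is routine.
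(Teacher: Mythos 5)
Your proposal is correct and follows essentially the same route as the paper: the paper defines the same two discrepancies (called $C_r$ and $D_{r+1}$ there), derives the same two recursions from \eqref{k2}--\eqref{k4} and the boundary values \eqref{k3}, and closes the induction by observing that $a_na_{n+1}\ge 0$ and the trailing remainders are nonpositive for $\lambda\le 0$. Your recursions check out exactly (indeed your coefficient $(a_na_{n+1})^2$ on $B_n$ is the correct one, where the paper's displayed recursion for $D_{n+2}$ writes $a_na_{n+1}D_n$, an immaterial slip since both factors are nonnegative), so nothing further is needed.
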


\begin{proof} For any $r\ge 1$, we denote
\ben
C_r:=\sum^{r-1}_{j=1}\chi_{r,j}\chi_{r+1,j}\frac 1{2j-1}-\varphi_{r-1}\varphi_r,\ \ D_{r+1}:=\sum^{r+1}_{j=1}\chi^2_{r+1,j}\frac 1{2j-1}-\varphi^2_r.
\een
We again argue by induction. First \eqref{k5} is obvious for $r=1,2$ by \eqref{k1} since $\lam\le 0$. Now we assume \eqref{k5} is valid for all $r\leq n$, $n\ge 2$.
By \eqref{k2}-\eqref{k4}, it is easy to see that
\ben
C_{n+1}=-\chi^2_{n+1,n+1}\frac 1{2n+1}+D_{n+1}+a_na_{n+1}C_n,
\een
where we have used $\chi_{n+2,n}(\lam)=\chi_{n+1,n}(\lam)$. Thus if $C_n\le 0, D_{n+1}\le 0$, then $C_{n+1}\le 0$.

On the other hand, by \eqref{k2}-\eqref{k4}, we have
\ben
D_{n+2}=D_{n+1}+a_na_{n+1}D_n+2a_na_{n+1}C_n+(a_1\cdots a_{n+1})^2(\frac 1{2n+3}-\frac 1{2n-1}).
\een
Thus $D_{n+2}\le 0$ if $D_n\le 0,D_{n+1}\le 0$, and $C_n\le 0$. This completes the proof.  
\end{proof}

The following theorem is the main result of this section.

\begin{thm}\label{thm:5.1}
Let $\D$ be a symmetric or skew-symmetric matrix and $\bY_r\in\bV^r_\tau$ is the solution of the problem \eqref{dp}. Then we have
\begin{align}
&\|\bY_r\|_{L^2(0,T)}\leq T^{1/2}\|\bY_0\|_{\mathbb{R}^M}+CT\|\bR\|_{L^2(0,T)},\label{stab_L2}\\
&\max_{0\leq t\leq T}\|\bY_r\|_{\mathbb{R}^M}\leq Cr(\|\bY_0\|_{\mathbb{R}^M}+T^{1/2}\|\bR\|_{L^2(0,T)}),\label{stab_Linf}
\end{align}
where the constant $C$ is independent of $\tau,r,\D$, and $\bR$.
\end{thm}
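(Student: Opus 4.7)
The plan is to improve the bound on the ``free'' part of $\bY_r$ by exploiting the diagonalizability of $\D$. Specifically, on each $I_n$ I would write $\bY_r=\hat\bY_r+(\bY_r-\hat\bY_r)$, where $\hat\bY_r\in[P^r]^M$ solves $\hat\bY_r'=\D\cP_{r-1}\hat\bY_r$ in $I_n$ with $\hat\bY_r(t_n)=\bY_r^n$, exactly as in the proof of Lemma~\ref{lem:2.2}. That proof already supplies the $r$-independent bound $\|\bY_r-\hat\bY_r\|_{L^2(I_n)}\le 2\tau_n\|\bR\|_{L^2(I_n)}$ from \eqref{g7}, so the entire task reduces to replacing the suboptimal estimate $\|\hat\bY_r\|_{L^2(I_n)}\le C\tau_n^{1/2}r\|\bY_r^n\|_{\R^M}$ in \eqref{g6} by the sharp bound $\|\hat\bY_r\|_{L^2(I_n)}\le \tau_n^{1/2}\|\bY_r^n\|_{\R^M}$.

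To that end I would diagonalize $\D$: set $\D=Q\Lambda Q^T$ with $Q$ real orthogonal and real eigenvalues $\mu_l\le 0$ when $\D$ is symmetric negative semidefinite, or $\D=U\Lambda U^*$ with $U$ unitary and purely imaginary eigenvalues $\mu_l=i\omega_l$ when $\D$ is skew-symmetric. In either case the change of variable is an isometry, the ODE for $\hat\bY_r$ decouples into independent scalar equations indexed by $\mu_l$, and orthogonality of $\{\widetilde L_j\}$ gives $\|\hat\bY_r\|_{L^2(I_n)}^2=\tau_n\sum_l\sum_{i=1}^{r+1}|\hat a_{i-1}^{(l)}|^2/(2i-1)$. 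For each scalar mode, equation \eqref{y7} applied with data $\bfb_0=\cdots=\bfb_{r-1}=0$ and $\bfb_r=\hat Y_r^{n,(l)}$, combined with the identity $\chi_{r+1,i}=(-1)^{r+1}\phi_{r+1,i}$, yields the closed form
\begin{align*}
\hat a_{i-1}^{(l)}=(-1)^i\,\frac{\chi_{r+1,i}(\tau_n\mu_l)}{\varphi_r(\tau_n\mu_l)}\,\hat Y_r^{n,(l)},\qquad i=1,\dots,r+1.
\end{align*}

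The sharp bound then follows by invoking, for each $l$, the second inequality in Lemma~\ref{lem:5.2} with $\lambda=\tau_n\mu_l\le 0$ in the symmetric case. In the skew-symmetric case $\lambda:=\tau_n\mu_l$ lies on the imaginary axis; since $\chi_{r+1,j}$ and $\varphi_r$ have real coefficients, $\chi_{r+1,j}(-\lambda)=\overline{\chi_{r+1,j}(\lambda)}$ and $\varphi_r(-\lambda)=\overline{\varphi_r(\lambda)}$, so the second identity of Lemma~\ref{lemma_app1} reduces to $\sum_{j=1}^{r+1}|\chi_{r+1,j}(\lambda)|^2/(2j-1)=|\varphi_r(\lambda)|^2+\frac{(-1)^{r+1}2r}{2r+1}(a_1\cdots a_r)^2$; since $a_k=\lambda/(2(2k-1))$ and $\lambda^{2r}=(-1)^r(\tau_n\omega_l)^{2r}$, the last term equals $-\frac{2r(\tau_n\omega_l)^{2r}}{(2r+1)[2^r(2r-1)!!]^2}\le 0$. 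Hence $\sum_i|\chi_{r+1,i}(\lambda)|^2/((2i-1)|\varphi_r(\lambda)|^2)\le 1$ in both cases, and summation over $l$ produces $\|\hat\bY_r\|_{L^2(I_n)}\le\tau_n^{1/2}\|\bY_r^n\|_{\R^M}$.

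Combining with \eqref{g7} gives $\|\bY_r\|_{L^2(I_n)}\le\tau_n^{1/2}\|\bY_r^n\|_{\R^M}+2\tau_n\|\bR\|_{L^2(I_n)}$; summing in $n$, invoking the nodal bound \eqref{g3} together with $\sum_n\tau_n^2\|\bR\|_{L^2(I_n)}^2\le T^2\|\bR\|_{L^2(0,T)}^2$ yields \eqref{stab_L2}. Applying the $hp$ inverse estimate $\max_{t\in I_n}\|\bY_r\|_{\R^M}\le C\tau_n^{-1/2}r\|\bY_r\|_{L^2(I_n)}$ once and appealing again to \eqref{g3} then delivers \eqref{stab_Linf}; the single factor of $r$ now comes solely from the inverse estimate, rather than from the two compounded factors of $r$ appearing in Lemma~\ref{lem:2.2}. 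I expect the main technical subtlety to be the skew-symmetric case, which requires verifying the sign of the correction term on the imaginary axis and tracking that complex diagonalization still preserves the relevant Euclidean norm; both issues are handled cleanly by the real-coefficient structure of $\chi_{r+1,j}$ and $\varphi_r$ established in Section~\ref{sec_implementation}.
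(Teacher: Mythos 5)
Your proposal is correct and follows essentially the same route as the paper: the decomposition $\bY_r=\hat\bY_r+(\bY_r-\hat\bY_r)$ with \eqref{g7}, the explicit Legendre-coefficient formula $\bfa_{i-1}=(-1)^i\chi_{r+1,i}(\tau_n\D)\varphi_r(\tau_n\D)^{-1}\bY_r^n$ from Theorem \ref{thm:3.1}, and the sharp bound $\|\hat\bY_r\|_{L^2(I_n)}\le\tau_n^{1/2}\|\bY_r^n\|_{\R^M}$ obtained from Lemma \ref{lemma_app1} (skew-symmetric case) and Lemma \ref{lem:5.2} (symmetric case). The only difference is that you diagonalize $\D$ explicitly and apply the identities scalar-mode by scalar-mode, whereas the paper applies them directly as matrix identities via $p(\D)^T=p(\pm\D)$; these are equivalent.
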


\begin{proof}
Let $\hat\bY_r\in [P^r]^M$ be defined in \eqref{g1} of Lemma \ref{lem:2.2}, we claim that
\be\label{k6}
\|\hat\bY_r\|_{L^2(I_n)}\le\tau_n^{1/2}\|\bY^n_r\|_{\R^M},
\ee
which improves the bound \eqref{g6} in the proof of Lemma \ref{lem:2.2}. To show \eqref{g1}, by Theorem \ref{thm:3.1}, we have
\begin{align*}
\hat{\bY}_r(t)&=\sum_{j=1}^{r+1}\bfa_{j-1}\tilde{L}_{j-1}(t)
=\sum_{j=1}^{r+1}(-1)^{j}\frac{\chi_{r+1,j}(\tau_n\D)}{\varphi_r(\tau_n\D)}\bY_r^n\tilde{L}_{j-1}(t)\ \ \forall t\in I_n.
\end{align*}
Thus by \eqref{d2},
\be\label{k7}
\|\hat{\bY}_r\|_{L^2(I_n)}^2=\sum_{j=1}^{r+1}\|\chi_{r+1,j}(\tau\D)\varphi_r(\tau_n\D)^{-1}\bY^n_r\|_{\mathbb{R}^M}^2\frac{\tau_n}{2j-1}.
\ee
Denote $\mathbf{Z}_r^n=\varphi_r(\tau_n\D)^{-1}\bY_r^n$. If $\D$ is skew-symmetric $\D^T=-\D^T$, by \eqref{d2}, \eqref{eqn_app2}, we have
\begin{align*}
\|\hat{\bY}_r\|_{L^2(I_n)}^2&=\sum_{j=1}^{r+1}\|\chi_{r+1,j}(\tau_n\D)\mathbf{Z}_r^n\|_{\mathbb{R}^M}^2\frac{\tau_n}{2j-1}\\
&=\tau_n\|\varphi_r(\tau_n\D)\mathbf{Z}_r^n\|_{\mathbb{R}^M}^2-\tau_n\left\|\sqrt{\frac{2r}{2r+1}}\frac{r!}{(2r)!}(\tau_n\D)^r\mathbf{Z}_r^n\right\|_{\mathbb{R}^M}^2\\
&\leq\tau_n\|\bY_r^n\|_{\mathbb{R}^M}^2.
\end{align*}
This shows the claim \eqref{k6} when $\D$ is skew-symmetric. 

If $\D$ is symmetric, the eigenvalues of $\D$ are non-positive since $\D+\D^T\le 0$. By Lemma \ref{lem:5.2}, it is easy to show that
\ben
\sum^{r+1}\|\chi_{r+1,j}(\tau_n\D)\mathbf{Z}^n_r\|_{\R^M}^2\frac{\tau_n}{2j-1}-\tau_n\|\varphi_r(\tau_n\D)\mathbf{Z}^n_r\|_{\R^M}^2\le 0,
\een
which yields
\ben
\sum^{r+1}_{j=1}\|\chi_{r+1,j}(\tau_n\D)\varphi_r(\tau_n\D)^{-1}\bY^n_r\|_{\R^M}^2\frac{\tau_n}{2j-1}\le\tau_n\|\bY^n_r\|_{\R^M}^2.
\een
Now it follows from \eqref{k7} that $\|\hat\bY_r\|_{L^2(I_n)}\le\tau_n^{1/2}\|\bY^n_r\|_{\R^M}$. This shows the claim \eqref{k6} when $\D$ is a symmetric matrix.

It follows from \eqref{k6}, \eqref{g7} and \eqref{g3} that
\begin{align*}
\|{\bY}_r\|_{L^2(I_n)}&\leq \tau_n^{1/2}\|\bY_r^n\|_{\mathbb{R}^M}+2\tau_n\|\bR\|_{L^2(I_n)}\\
&\leq \tau_n^{1/2}\|\bY_0\|_{\mathbb{R}^M}+C\tau_n^{1/2}T^{1/2}\|\bR\|_{L^2(0,T)}
\end{align*}
This implies \eqref{stab_L2} easily. Now by the $hp$ inverse estimate,
\begin{align*}
\max_{t_n\leq t\leq t_{n+1}}\|\bY_r\|_{\mathbb{R}^M}\leq C\tau_n^{-1/2}r\|\bY_r\|_{L^2(I_n)}\leq Cr(\|\bY_0\|_{\mathbb{R}^M}+T^{1/2}\|\bR\|_{L^2(0,T)}).
\end{align*}
This shows \eqref{stab_Linf}. This completes the proof.  
\end{proof}

The following theorem which improves the error estimates in Theorem \ref{thm:2.1} can be proved by the argument in Theorem \ref{thm:2.1} by using Theorem \ref{thm:5.1} instead of Lemma \ref{lem:2.2}. Here we omit the details.

\begin{thm}\label{thm:5.2}
Let $\D$ be a symmetric or skew-symmetric matrix. Assume that $\bR\in [H^s(0,T)]^M$, $\bY\in [W^{1+s,\infty}(0,T)]^M$, $s\ge 1$, and $\bY_r\in\bV^r_\tau$ is the solution of the problem \eqref{dp}, we have
\ben
& & \|\bY - \bY_r\|_{L^2(0,T)} \leq C(1+T)\frac{\tau^{\min(r+1,s)}}{r^s}(\|\bY\|_{H^s(0,T)}+\|\D\bY\|_{H^s(0,T)}),\\
& &\max_{0\leq t\leq T}\|\bY - \bY_r\|_{\R^M}\leq C(1+T^{1/2})\frac{\tau^{\min(r+1,s)}}{r^{s-1}}(T^{1/2}\|\bY\|_{W^{s+1,\infty}(0,T)}+\|\bR\|_{H^s(0,T)}),
\een
where the constant $C$ is independent of $\tau,r,\D$, and $\bR$ but may depend on $s$.
\end{thm}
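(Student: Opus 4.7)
My plan is to follow the error-equation argument from the proof of Theorem \ref{thm:2.1} verbatim, but to substitute the improved stability bounds of Theorem \ref{thm:5.1} in place of the bounds \eqref{g3}--\eqref{g4} used there. The key fact that makes this substitution clean is that the interpolation operator $\Pi_r$ commutes with the constant matrix $\D$, since $\cP_{r-1}$ does; consequently, $\D\cP_{r-1}(\Pi_r\bY-\bY)=\cP_{r-1}(\D\Pi_r\bY-\D\bY)=\cP_{r-1}(\Pi_r(\D\bY)-\D\bY)$, so the source term in the error equation can be estimated using the interpolation bound \eqref{Pi2} applied to $\D\bY$.

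Concretely, let $\Pi_r\bY\in\bV^r_\tau$ denote the interpolant from Lemma \ref{lem:2.3}, set $\bE_r:=\bY_r-\Pi_r\bY$, and $\bm{\eta}:=\Pi_r\bY-\bY$. Combining \eqref{Pi1} with $\bY'=\D\bY+\bR$ gives $(\Pi_r\bY)'=\D\cP_{r-1}\bY+\cP_{r-1}\bR$, which together with \eqref{dp} yields the error equation
\[
\bE_r'=\D\cP_{r-1}\bE_r+\D\cP_{r-1}\bm{\eta}\ \text{ in }I_n,\qquad \bE_r(0)=\mathbf{0}.
\]
Because $\D\cP_{r-1}\bm{\eta}\in[P^{r-1}]^M$ on each $I_n$, this is exactly the continuous time Galerkin problem \eqref{dp} with data $(\mathbf{0},\D\cP_{r-1}\bm{\eta})$, so Theorem \ref{thm:5.1} applies to $\bE_r$.

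For the first bound, I would apply \eqref{stab_L2} to $\bE_r$ and then use the stability of $\cP_{r-1}$ together with the commutation identity above:
\[
\|\bE_r\|_{L^2(0,T)}\le CT\|\D\cP_{r-1}\bm{\eta}\|_{L^2(0,T)}\le CT\|\Pi_r(\D\bY)-\D\bY\|_{L^2(0,T)}\le C\,T\,\frac{\tau^{\min(r+1,s)}}{r^s}\|\D\bY\|_{H^s(0,T)},
\]
where the last step uses \eqref{Pi2} on $\D\bY$. Adding $\|\bY-\Pi_r\bY\|_{L^2(0,T)}\le C\,\tau^{\min(r+1,s)}r^{-s}\|\bY\|_{H^s(0,T)}$ by the triangle inequality gives the first estimate.

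For the second bound, I would instead invoke \eqref{stab_Linf} on $\bE_r$, which produces the factor $Cr$ rather than $Cr^2$ and accounts for the gain from $r^{s-2}$ to $r^{s-1}$:
\[
\max_{0\le t\le T}\|\bE_r\|_{\R^M}\le Cr\,T^{1/2}\|\D\cP_{r-1}\bm{\eta}\|_{L^2(0,T)}\le C\,T^{1/2}\,\frac{\tau^{\min(r+1,s)}}{r^{s-1}}\|\D\bY\|_{H^s(0,T)}.
\]
Then I would estimate $\|\D\bY\|_{H^s(0,T)}\le\|\bY'\|_{H^s(0,T)}+\|\bR\|_{H^s(0,T)}\le T^{1/2}\|\bY\|_{W^{s+1,\infty}(0,T)}+\|\bR\|_{H^s(0,T)}$ by using $\bY'=\D\bY+\bR$, and bound the interpolation part $\|\bY-\Pi_r\bY\|_{L^\infty(0,T)}$ by \eqref{Pi3}. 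Combining these via the triangle inequality gives the second estimate. The only nontrivial point in the whole argument is the commutation $\Pi_r\circ\D=\D\circ\Pi_r$ and $\cP_{r-1}\circ\D=\D\circ\cP_{r-1}$, which is immediate since $\D$ is a constant matrix acting pointwise in $t$; beyond that, everything is a mechanical substitution of Theorem \ref{thm:5.1} for Lemma \ref{lem:2.2} in the proof of Theorem \ref{thm:2.1}.
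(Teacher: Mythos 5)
Your proposal is correct and is exactly the route the paper intends: the paper omits the proof of Theorem \ref{thm:5.2}, stating only that it follows by repeating the argument of Theorem \ref{thm:2.1} with Theorem \ref{thm:5.1} substituted for Lemma \ref{lem:2.2}, which is precisely what you carry out. Your identification of the error equation for $\bY_r-\Pi_r\bY$ as an instance of \eqref{dp} with zero initial data and source $\D\cP_{r-1}(\Pi_r\bY-\bY)$, together with the commutation of $\cP_{r-1}$ and $\Pi_r$ with the constant matrix $\D$ and the bound \eqref{Pi2} applied to $\D\bY$, correctly fills in the details the paper leaves out.
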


We remark that the first estimate in Theorem \ref{thm:5.2} is optimal both in $\tau$ and $r$.

\section{Numerical examples}
\label{sec_numeric}
In this section, we provide some numerical examples to confirm the theoretical results in this paper.

\begin{exmp}\label{exampe_1} (Dissipative problem)
Let $\Om=(0,1)\times (0,1)$ and $T=1$. We consider the following constant coefficient convection-diffusion problem
\begin{align}\label{2d_convection_diff}
\left\{\begin{aligned}
&u_t+\nabla\cdot(\bm{\beta}u-\epsilon\nabla u)=f \ \ \mbox{in } \Omega\times(0,T),\\
&u(\mathbf{x},0)=u_0(\mathbf{x})\ \ \mbox{in }\Om.
\end{aligned}\right.
\end{align}
The boundary condition is set to be periodic. The source term $f$ is chosen such that the exact solution is $u(\mathbf{x},t)=\exp(-t)\sin(4\pi(x_1-t))\cos(4\pi(x_2-t))$.
 \end{exmp}

We choose $\bm{\beta}=(1,1)^T$ and $\epsilon=1$ in \eqref{2d_convection_diff}. For spatial discretizations, we apply the local discontinuous Galerkin (LDG) method in Cockburn and Shu \cite{CockburnShu1998SINUM} by using purely upwind fluxes for convection terms and alternating fluxes for diffusion terms. For the sake of completeness, we recall the method for solving \eqref{2d_convection_diff} here.

Let $\mathcal{M}$ denote a uniform Cartesian mesh of $\Om$ with $h$ the length of the sides of the elements.
$\mathcal{E}=\mathcal{E}^{\rm side}\cup\mathcal{E}^{\rm bdy}$, where $\mathcal{E}^{\rm side}:=\{e=\partial K \cap \partial K': K, K'\in \mathcal{M}\}$, $\mathcal{E}^{\rm bdy}:=\{e=\partial K \cap \partial \Omega: K\in \mathcal{M}\}$. For any subset $\widehat{\cam}\subset\cam$ and $\widehat{\caE}\subset\caE$, we use the notation
\begin{align*}
(u,v)_{\widehat{\cam}}=\sum_{K \in \widehat{\cam}}(u,v)_{K}, \ \ \langle u,v\rangle_{\widehat{\mathcal{E}}}=\sum_{e \in \widehat{\caE}}\langle u,v\rangle_{e},
\end{align*}
where $(\cdot,\cdot)_K$ and $\la\cdot,\cdot\ra_e$ denote the inner product of $L^2(K)$ and $L^2(e)$, respectively.

For any $e\in \caE$, we fix a unit normal vector $\mathbf{n}_e$ of $e$ with the convention that $\mathbf{n}_e$ is the unit outer normal to $\partial \Omega$ if $e \in \caE^{\rm bdy}$. For any $v\in H^1(\cM):=\{v:v\in H^1(K), K\in\cM\}$, we define the jump operator of $v$ across $e$:
\begin{align*}
[\![v]\!]_e:=v^{-} -v^{+}\ \  \forall e \in \caE^{\rm side},\ \ \ \
[\![v]\!]_e:=v^{-}\ \ \forall e \in \caE^{\rm bdy},
\end{align*}
where $v^{\pm}(\mathbf{x}):=\lim_{\varepsilon\rightarrow 0^+} v(\mathbf{x}\pm\varepsilon \mathbf{n}_e)\ \forall\mathbf{x}\in e$.
For any integer $p\geq0$, we define the finite element space
\begin{align*}
V_h^p:=\{v\in L^2(\Omega):v|_{K}\in Q^p(K), K\in \mathcal{M}\},
\end{align*}
where $Q^p(K)$ denotes the space of polynomials of degree at most $p$ in each variable in $K$.

The semi-discrete problem is to find $(u_h,\mathbf{q}_h)\in [V_h^p]^3$ such that, for all test functions $(v_h,\mathbf{r}_h)\in [V_h^p]^3$,
\begin{align*}
&(\partial_tu_h,v_h)_{\mathcal{M}}+\mathcal{G}(\bm{\beta}u_h,v_h)=\sqrt{\epsilon}\,\left[-(\mathbf{q}_h,\nabla v_h)_{\mathcal{M}}+\langle {\mathbf{q}}^-_h\cdot \mathbf{n},[\![v_h]\!]\rangle_{\mathcal{E}}\right]
+(f,v_h)_{\mathcal{M}},\\
&(\mathbf{q}_h,\mathbf{r}_h)_{\mathcal{M}}=\sqrt{\epsilon}\,\left[-(u_h,\text{div} \mathbf{r}_h)_{\mathcal{M}}+\langle {u}^+_h,[\![\mathbf{r}_h]\!]\cdot \mathbf{n}\rangle_{\mathcal{E}}\right],\\
&u_h(\mathbf{x},0)=(\mathcal{P}_hu_0)(\mathbf{x})\ \ \mbox{in }\Om.
\end{align*}
Here $\mathcal{P}_h:L^2(\Om)\to V_h^p$ is the standard $L^2$ projection operator, and
\begin{align*}
&\mathcal{G}(\bm{\beta}u_h,v_h)=-(\bm{\beta}u_h,\nabla v_h)_{\mathcal{M}}+\langle \check{u}_h\bm{\beta}\cdot \mathbf{n},[\![v_h]\!]\rangle_{\mathcal{E}},\end{align*}
where $\check{u}_h$ is chosen as the upwind flux: $\check{u}_h=u_h^-$ if $\bm{\beta}\cdot \mathbf{n} >0$, $\check{u}=u^+_h$ if $\bm{\beta}\cdot \mathbf{n} <0$. For $e\in \caE^{\rm bdy}$, we use the periodic boundary condition to define ${u}_h^+$.

The optimal $L^2$-norm error estimate of order $p+1$ of the semi-discrete scheme for quasi-uniform Cartesian meshes can be found in Cheng et al \cite[Theorem 2.4]{YaoMC2017}, where it is shown that
$\max_{0\le t\le T}\|u-u_h\|_{L^2(\Omega)}\leq C (1+T)h^{p+1}$.
Therefore, combined with the continuous time Galerkin scheme, we know that the fully discrete scheme has $O(h^{p+1}+\tau^{r+1})$ accuracy in the norm $\|\cdot\|_{L^\infty(0,T;L^2(\Omega))}$ and $O(h^{p+1}+\tau^{2r})$ in the $L^2$ norm at nodes $t=t_n$, $n=1,\cdots,N$.

To test the accuracy at the nodes, we set $\tau=h^{\frac{p+1}{2r}}$ and thus $N=T/\tau=T\beta^{\frac 1r}$, where $\beta=h^{-\frac{p+1}{2}}$. The numerator of the $[r/r]$ Pad\'e approximation $P_r(z)$ has $2k$ complex zeros and $1$ real root if $r=2k+1, k\ge 1$, and $2k$ complex zeros if $r=2k$, $k\ge 1$. Denote by $C(2M)$ the costs of solving the matrix problem $\tau_n\D+\zeta_j\I$ with $\zeta_j$ being complex and $C(M)$ the costs of solving the matrix problem $\tau_n\D+\zeta_j\I$ with $\zeta_j$ real, where $\zeta_j$, $j=1,\cdots, r$, are zeros of $P_r(z)$. Then the computational time in each time step of Algorithm \ref{aa1} is proportion to $C(2M)$ for the parallel computation and proportion to $kC(2M)+(r-2k)C(M)$ for the sequential computation. The wall time of using Algorithm \ref{aa1} using parallel machines is then proportion to $N=T\beta^{\frac 1r}$ which is decreasing in $r$. Thus high order time discretization is preferred for parallel computations. On the other hand, for the sequential computation, the wall time of using Algorithm \ref{aa1} is proportion to $rN=Tr\beta^{\frac 1r}$ which minimizes at $r=\ln\beta$ for $r>0$. This implies that the optimal choice of the order for the sequential computation is $r=\lfloor\ln\beta\rfloor+1$, where $\lfloor a\rfloor$ is the maximum integer strictly less than $a>0$. Table \ref{tab1} shows the error $\|(u-u_h)(\cdot,T)\|_{L^2(\Om)}$ at the terminal time when $r=\lfloor\ln\beta\rfloor+1$. The optimal $(p+1)$-th order is observed which confirms our theoretical results. We observe that the errors of high order schemes are significant smaller than the low order schemes.

To test the accuracy in the $\|\cdot\|_{L^\infty(0,T;L^2(\Om))}$ norm, we set $\tau=h^{\frac{p+1}{r+1}}$ and thus $N=T/\tau=T\gamma^{\frac 1{r+1}}$, where $\gamma=h^{-(p+1)}$. The wall time of using Algorithm \ref{aa1} and Algorithm \ref{aa2} for the parallel computation is proportion to $N=T\gamma^{\frac 1{r+1}}$ which decreases in $r$. On the other hand, for the sequential computation, the wall time of using Algorithm \ref{aa1} and Algorithm \ref{aa3} is $rN=Tr\gamma^{\frac 1{r+1}}$ which is increasing in $r$ if $\ln\gamma\le 4$ and minimizes at $r^*=[-(2-\ln\gamma)+\sqrt{(2-\ln\gamma)^2-4}\,]/2$ if $\ln\gamma\ge 4$. Since $r^*\ge 1$ is equivalent to $\ln\gamma\ge 4$, the optimal choice of the order for minimizing the computation wall time is $r=\max(1,\lfloor r^*\rfloor+1)$. We note that for the sequential computation, Algorithm \ref{aa3} is cheaper than Algorithm \ref{aa2}. Table \ref{tab2} shows the error
\ben
\max_{0\le n\le N-1,1\le k\le 10}\|(u-u_h)(\cdot,t_n+0.1k\tau_n)\|_{L^2(\Om)}
\een
as the approximation of $\|u-u_h\|_{L^\infty(0,T;L^2(\Omega))}$ when $r=\max(1,\lfloor r^*\rfloor+1)$. We again observe the optimal $(p+1)$-th order convergence and that high order methods perform much better than low order methods.

\begin{table}[!ht]\centering
	\caption{Example \ref{exampe_1}: numerical errors of $\|(u-u_h)(\cdot,T)\|_{L^2(\Om)}$ and orders.}\label{tab1}

\begin{tabular}{|c|cc|cc|cc|}
  \hline
 &\multicolumn{2}{|c|}{$p=3$}&\multicolumn{2}{|c|}{$p=4$}&\multicolumn{2}{|c|}{$p=5$}\\\hline
  $h$    & error & order& error & order & error & order  \\ \hline
 $1/4$  & 8.06E-03 & -- & 1.29E-03 & --  & 1.71E-04 &  --  \\
 $1/8$ & 5.64E-04 & 3.84	& 4.40E-05 & 4.88& 2.84E-06 & 5.91  \\
 $1/16$  & 3.55E-05 & 3.99	& 1.37E-06 & 5.01& 4.33E-08 & 6.04\\
 $1/32$  & 2.03E-06 & 4.13	& 4.26E-08 & 5.00& 6.94E-10 & 5.96  \\ \hline
\end{tabular}

\end{table}

\begin{table}[!ht]\centering
	\caption{Example \ref{exampe_1}: numerical errors in $\|\cdot\|_{L^\infty(0,T;L^2(\Om))}$ norm and orders.}\label{tab2}

\begin{tabular}{|c|cc|cc|cc|}
  \hline
 &\multicolumn{2}{|c|}{$p=3$}&\multicolumn{2}{|c|}{$p=4$}&\multicolumn{2}{|c|}{$p=5$}\\\hline
  $h$    & error & order& error & order & error & order  \\ \hline
 $1/4$  & 2.77E-02 & --   &3.99E-03 & -- & 5.81E-04 &  --  \\
 $1/8$ & 2.00E-03 & 3.79	&1.36E-04 & 4.87 & 1.02E-05 & 5.83  \\
 $1/16$ & 1.14E-04 & 4.14	&4.40E-06 & 4.95 & 1.69E-07 & 5.91\\
 $1/32$  & 7.18E-06 & 3.98	&1.33E-07 & 5.05 & 2.73E-09 & 5.95 \\ \hline
 \end{tabular}

\end{table}

\begin{exmp}\label{example4}
(Wave propagation problem) Let $\Omega=(-2,2)\times(-2,2)$ and $T=1$. We consider the following wave equation with discontinuous coefficients
\begin{align}\label{modelproblem}
\left\{\begin{aligned}
&\frac{1}{\rho c^2} \pa_tu= \mbox{\rm div}\, \mathbf{q}+f,\ \  \rho\pa_t\mathbf{q}=\nabla u \ \ \ \ \mbox{in }\Omega\times (0,T),\\
&[\![u]\!]=0,\quad [\![\mathbf{q}\cdot \mathbf{n}]\!]=0\ \ \ \ \text{ on } \Gamma\times (0,T),\\
&u=0\quad \text{ on }\partial\Omega\times(0,T),\\
&u(\mathbf{x},0)=u_0(\mathbf{x}),\ \  \mathbf{q}(\mathbf{x},0)=\mathbf{q}_0(\mathbf{x})\ \ \ \ \mbox{in }\Omega.
\end{aligned}\right.
\end{align}
We assume the interface $\Gamma$ is the union of two closely located ellipses. We take $\Omega_1=\{\mathbf{x}\in\Omega:\frac{(x_1-d_1)^2}{a^2}+\frac{x_2^2}{b^2}< 1 \text{ or }\frac{(x_1-d_2)^2}{a^2}+\frac{x_2^2}{b^2}< 1\}$, which is the union of two disks, and $\Omega_2=\Omega\backslash \bar{\Omega}_1$. Here $d_1=-0.82$, $d_2=0.82$, $a=0.81$, and $b=0.51$. The distance between two ellipses is $0.02$. We consider the wave equation \eqref{modelproblem} with $\rho_1=1/2,\, \rho_2=1$, $c_1=c_2= 1$, and the source $f$ is chosen such that the exact solution is
\begin{align*}
&u(\mathbf{x},t)=\left\{\begin{array}{cc}
\cos(3t)\sin(r_1-1)\sin(r_2-1)\sin(3\pi x_1)\sin(3\pi x_2) & \text{ in } \Omega_1,\\
\\
2\cos(3t)\sin(r_1-1)\sin(r_2-1)\sin(3\pi x_1)\sin(3\pi x_2) & \text{ in } \Omega_2,
\end{array}\right.
\end{align*}
where $r_1=\frac{(x_1-d_1)^2}{a^2}+\frac{x_2^2}{b^2}\quad r_2=\frac{(x_1-d_2)^2}{a^2}+\frac{x_2^2}{b^2}$.
The exact solution $\bfq(\bfx,t)$ is computed by \eqref{modelproblem} with the initial condition $\bfq_0=0$.
\end{exmp}

We use the unfitted finite element method in Chen et al \cite{Chen2021preprint} to discretize the problem in space. Let $\mathcal{M}$ be an induced mesh which is constructed from a Cartesian partition $\mathcal{T}$ of the domain $\Omega$ with possible local refinements and hanging nodes so that the elements are large with respect to both domains $\Om_1,\Om_2$. Let $\mathcal{M}^{\Gamma}:=\{K\in\mathcal{M}:K\cap\Gamma\not=\emptyset\}$ and $\caE=\caE^{\rm side}\cup\caE^{\Gamma}\cup\caE^{\rm bdy}$, where $\caE^\Gamma:=\{\GaK=\Gamma \cap K: K\in \mathcal{M} \}$.

For any $K\in\cM^\Ga$, $i=1,2$, let $K_i=K\cap\Omega_i$ and $K_i^h$ the polygonal approximation of $K_i$ bounded by the sides of $K$ and $\Ga_K^h$ which is the line segment connecting two intersection points of $\Ga_K\cap\pa K$. $K_i^h$ is the union of shape regular triangles $K_{ij}^h$, $1\leq J_i^K\leq 3$, whose sides are the sides of $K_i^h$ and $\Ga_K^h$. We always set $K_{i1}^h$ the element having $\Gamma_K^h$ as one of its sides. From $K_{ij}^h$ we define the curved element $\widetilde K_{ij}^h$ by
\ben
\widetilde K_{i1}^h=(K_i\cap K_{i1}^h)\cup(K_i\backslash\bar K_{i1}^h),\ \ \widetilde K_{ij}^h=K_i\cap K_{ij}^h,\ \ j=2,\cdots,J_i^K.
\een
Then we know that $K$ is the union of curved triangles $\widetilde K_{ij}^h$, $i=1,2, j=1,\cdots,J_i^K$.

For any integers $p,q\ge 1$, the space $P^p(K)$ denotes the space of polynomials of degree at most $p$ in $K$ and $Q^{p,q}(K)$ denotes the space of polynomials of degree at most $p$ for the first variable and $q$ for the second variable in $K$. For any $K\in\cM^\Ga$, we define the interface finite element spaces
\ben
W_p(K)=\{\varphi: \varphi|_{\widetilde K_{ij}^h}\in P^p(\widetilde K_{ij}^h),\ i=1,2,\ j=1,\cdots,J_i^K\},
\een
and $X_p(K)=W_p(K)\cap H^1(K_1\cup K_2)$. Notice that the functions in $X_p(K)$ are conforming in each $K_i, i=1,2$.
Now we define the following unfitted finite element spaces
\ben
& &X_p(\mathcal{M}):=\{v\in H^1(\Om_1\cup\Om_2): v|_K\in X_p(K)\ \ \forall K\in\cam^\Ga,\\
& &\hskip4.8cm v|_K\in Q^p(K)\ \ \forall K\in\cam\backslash\cam^\Ga\},\\
& &\mathbf{W}_p(\mathcal{M}):=\{\bmp: \bmp|_K \in [W_p(K)]^2 \ \ \forall K\in \mathcal{M}^{\Gamma}, \\
& &\hskip4.3cm \bmp|_K\in Q^{p-1,p}(K)\times Q^{p,p-1}(K)\ \ \forall K\in\cam\backslash\cam^\Ga\}.
\een
Let $X_p^0(\cM)=X_p(\cM)\cap H^1_0(\Om_1\cup\Om_2)$, where $H^1_0(\Om_1\cup\Om_2)=\{v\in H^1(\Om_1\cup\Om_2): v=0 \mbox{ on } \pa\Om\}$.

The semi-discrete unfitted finite element method for solving \eqref{modelproblem} is then to find $(u_h,\bfq_h)\in X_p^0(\cM)\times \fespaceq$ such that for all $(\varphi_h,\bmp_h)\in X_p^0(\cM)\times\fespaceq$,
\begin{align}
&\Big(\frac{1}{\rho c^2}\pa_tu_h,\varphi_h\Big)_{\cam}=-(\bfq_h,\nabla \varphi_h)_{\cam}+\langle{\bfq}_h^-\cdot \mathbf{n}, [\![\varphi_h]\!]\rangle_{\mathcal{E}^{\Gamma}}+(f,\varphi_h)_{\cam},\label{ldgsch1}\\
&(\rho\pa_t\bfq_h,\bmp_h)_{\cam}=-(u_h, \text{div}\bmp_h)_{\cam}+\langle {u}_h^+, \lj \bmp_h \rj\cdot \mathbf{n}\rangle_{\mathcal{E}},\label{ldgsch2}\\
&u_h(\mathbf{x},0)=(\mathcal{P}_h u_0)(\mathbf{x}),\ \ \bfq_h(\mathbf{x},0)=(\bm{P}_h\bfq_0)(\mathbf{x})\ \ \mbox{in }\Omega,\label{ldgsch3}
\end{align}
where $\cP_h:L^2(\Om)\to X_p^0(\cM)$ and $\bm{P}_h:[L^2(\Om)]^2\to\fespaceq$ are the standard $L^2$ projection operators.

It is shown in \cite[Theorem 2.2]{Chen2021preprint} that the following energy error of the semi-discrete scheme
\begin{align*}
E_{en}(t):=(\|(u- {u}_h)(\cdot,t)\|_{L^2(\Omega)}^2+\|(\bfq-{\bfq}_h)(\cdot,t)\|_{L^2(\Om)}^2)^{1/2}.
\end{align*}
has $p$-th order convergence.
The semi-discrete problem \eqref{ldgsch1}-\eqref{ldgsch3} is an ODE system which is solved by the continuous time Galerkin method in this paper. By Theorem \ref{thm:2.1} and Theorem \ref{super_conver}, we know that the energy error has $O(h^{p}+\tau^{r+1})$ convergence rate in the norm $\max_{0\le t\le T}E_{en}(t)$ and $O(h^{p}+\tau^{2r})$ in the $E_{en}(t)$ at nodes $t=t_n$, $n=1,\ldots,N$.

Note that these two ellipses are close but not tangent. To resolve the interface $\Gamma$ well, we locally refine the mesh near the interface such that the interface deviation $\eta_K\leq\eta_0=0.05$ for all $K\in \mathcal{M}^\Gamma$. For the concept of the interface deviation we refer to \cite[Definition 2.2]{Chen2021preprint}, see also Chen et al \cite[Definition 2.2]{ChenLi}. As an illustration, we show the computational mesh for $h=1/4$ in Figure \ref{fig_mesh_exmp_two_circles}. 


In this example, we test the accuracy of the error in the $\|\cdot\|_{L^\infty(0,T;L^2(\Om))}$ norm and $E_{en}(T)$. As in Example 1, to test the accuracy at nodes, we set $\tau=\frac{p}{2r}$, thus the wall time of using Algorithm \ref{aa1} for the sequential computation is proportion to $Tr\nu^{\frac 1r}$ which minimizes at $r=\ln \nu$ for $r>0$, where $\nu=h^{-\frac{p}{2}}$. Table \ref{tab3} shows the error $E_{en}(T)$ at the terminal time when $r=\lfloor\ln\nu\rfloor+1$.

To test the accuracy in the $\|\cdot\|_{L^\infty(0,T;L^2(\Om))}$ norm, we set $\tau=h^{\frac p{r+1}}$ and thus $N=T/\tau=T\mu^{\frac 1{r+1}}$, where $\mu=h^{-p}$. As in Example 1, the wall time of using Algorithm \ref{aa1} and Algorithm \ref{aa2} for the parallel computation is proportion to $N=T\mu^{\frac 1{r+1}}$ which decreases in $r$. However, since the wave equation is not dissipative, we cannot use Algorithm \ref{aa3} for the sequential computation. In this case, the wall time of using Algorithm \ref{aa1} and Algorithm \ref{aa2} for the sequential computation is proportion to $rN+r^2N=Tr(r+1)\mu^{\frac 1{r+1}}$ which is increasing if $\ln\mu\le 6$ and minimizes at $r^{**}=[-(3-\ln\mu)+\sqrt{(3-\ln\mu)^2-8}\,]/4$ if $\ln\mu\ge 6$. Since $r^{**}\ge 1$ is equivalent to $\ln\mu\ge 6$, the optimal choice of the order for minimizing the computation wall time is $r=\max(1,\lfloor r^{**}\rfloor+1)$. Table \ref{tab3} shows the error
\ben
\max_{0\le n\le N-1,1\le k\le 10}E_{en}(t_n+0.1k\tau_n)
\een
as the approximation of $\max_{0\le t\le T}E_{en}(t)$ when $r=\max(1,\lfloor r^{**}\rfloor+1)$. We clearly observe the optimal $p$-th order convergence and the superior performance of high order methods from Tables \ref{tab3}-\ref{tab4}.


\begin{figure}[!ht]
\begin{minipage}[c]{0.45\textwidth}
\includegraphics[width=1.1\textwidth, height = 0.9\textwidth]{./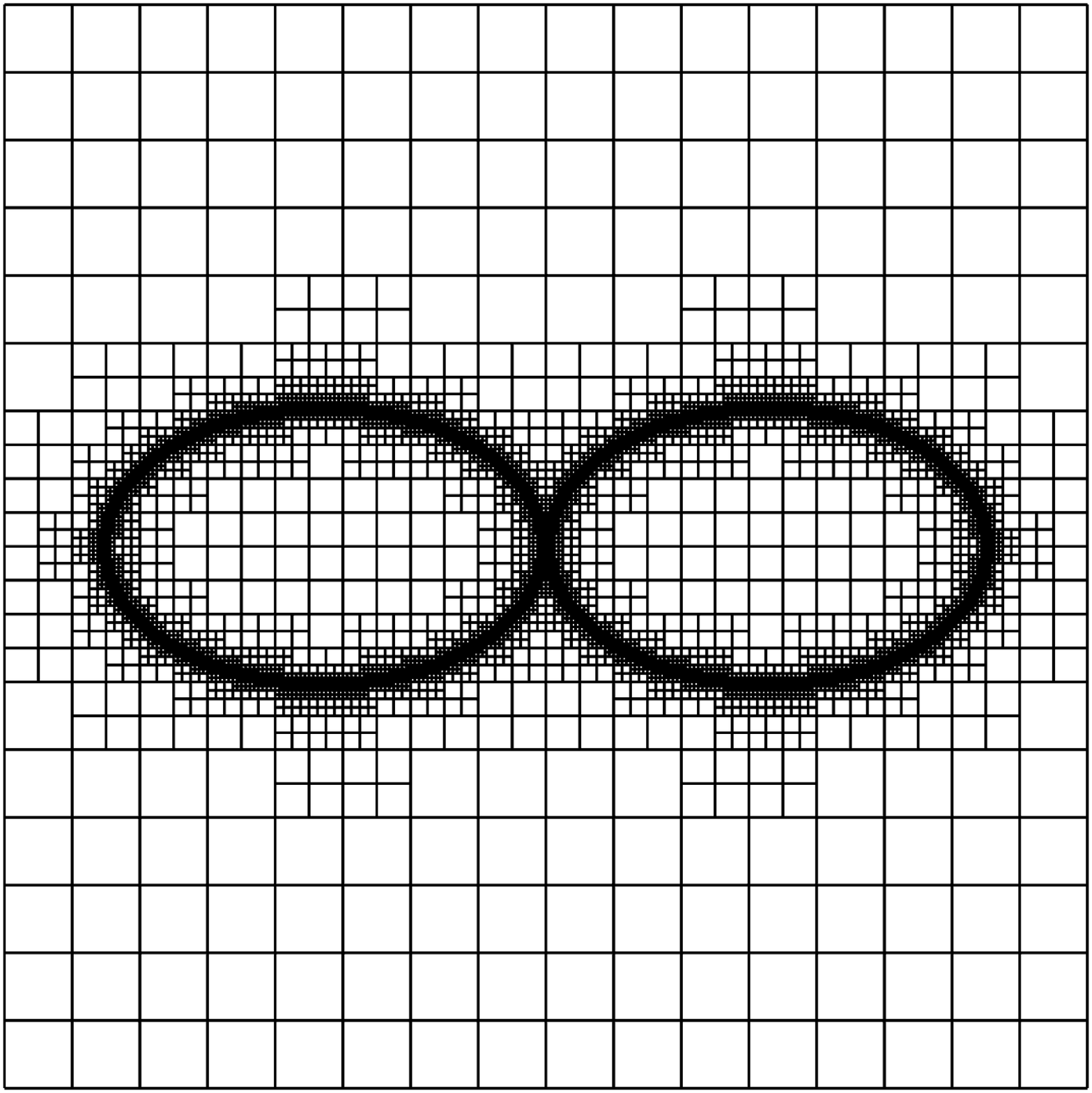}
\end{minipage}
\hspace{-0.5cm}
\begin{minipage}[c]{0.45\textwidth}
\includegraphics[width=0.9\textwidth, height = 0.9\textwidth]{./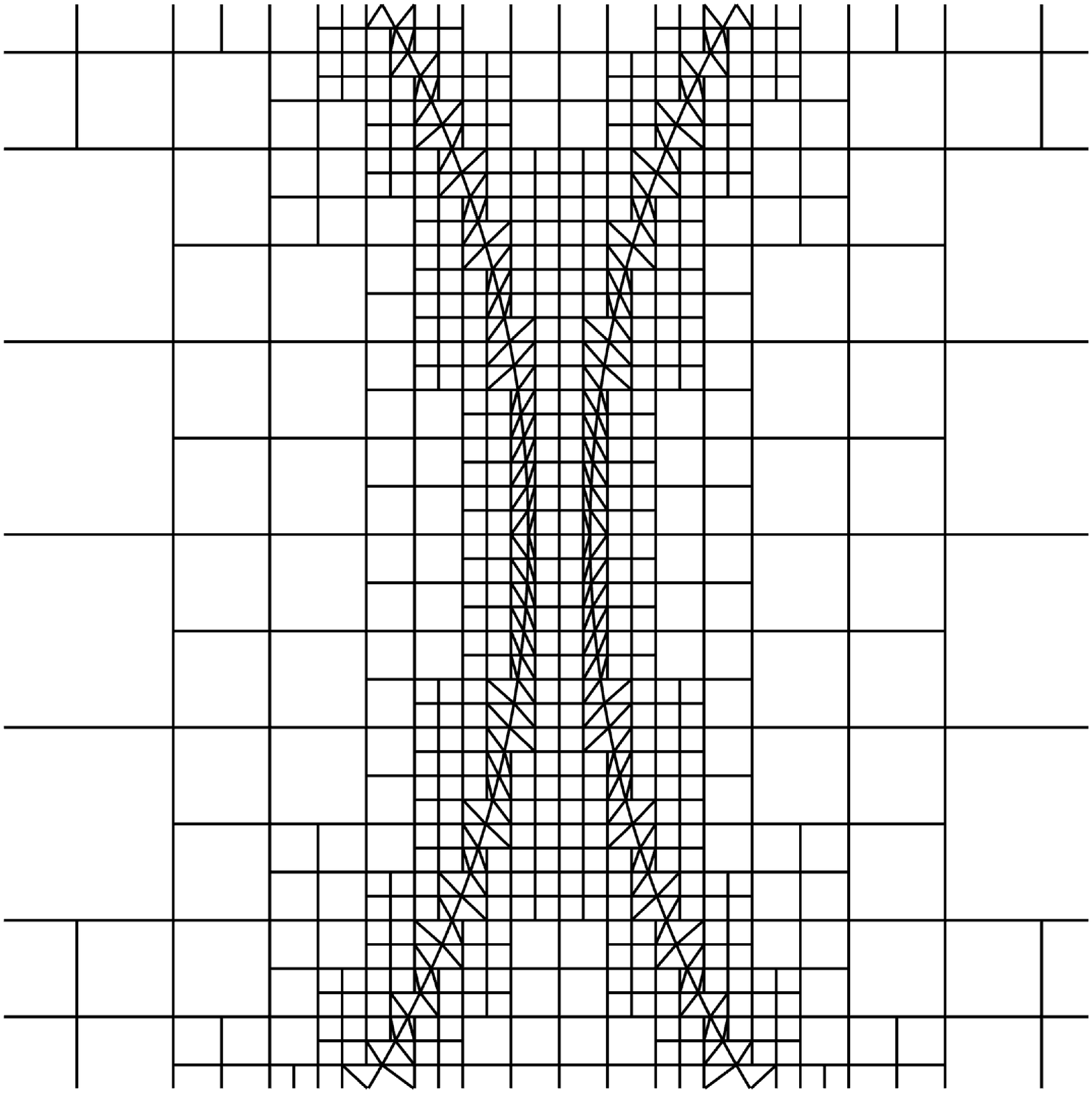}
\end{minipage}
\caption{\label{fig_mesh_exmp_two_circles}  Illustration of the computational domain and the mesh (left) and the corresponding zoomed local mesh (right) with $h=1/4$ in Example \ref{example4}.}
\end{figure}

\begin{table}[!ht]\centering
	\caption{Example \ref{example4}: numerical errors of $E_{en}(T)$ and orders.}\label{tab3}

\begin{tabular}{|c|cc|cc|cc|}
  \hline
 &\multicolumn{2}{|c|}{$p=3$}&\multicolumn{2}{|c|}{$p=4$}&\multicolumn{2}{|c|}{$p=5$}\\\hline
  $h$   & error & order & error & order & error & order  \\ \hline
 $1/4$ & 4.22E-01 & --	& 2.21E-01 & --& 1.25E-01 & -- \\
 $1/8$  &8.97E-02 & 2.24 & 2.64E-02 & 3.06& 6.20E-03 & 4.33\\
 $1/16$  & 1.32E-02 & 2.76& 1.81E-03 & 3.87& 3.82E-05 & 4.96  \\
  $1/32$  & 1.66E-03 & 2.99& 1.13E-04 & 4.00& 1.19E-06 & 5.00 \\ \hline
 \end{tabular}
\end{table}

%
%

%
%

\begin{table}[!ht]\centering
	\caption{Example \ref{example4}: numerical errors of $\max_{0\le t\le T}E_{en}(t)$ and orders.}\label{tab4}

\begin{tabular}{|c|cc|cc|cc|}
  \hline
 &\multicolumn{2}{|c|}{$p=3$}&\multicolumn{2}{|c|}{$p=4$}&\multicolumn{2}{|c|}{$p=5$}\\\hline
  $h$  & error & order & error & order & error & order  \\ \hline
  $1/4$  &5.23E-01   & --	  &2.95E-01  & --  & 1.69E-01 & --	 \\
 $1/8$  & 1.26E-01  &2.05 	  &3.70E-02   & 3.00	  &  8.87E-03 & 4.25 \\
 $1/16$ & 1.90E-02  &2.72	 &2.58E-03   & 3.84	 & 2.93E-04 & 4.92 \\
  $1/32$ & 2.45E-03  &2.96 	  & 1.66E-04   & 3.95 	&9.21E-06 & 4.99     \\ \hline
\end{tabular}

\end{table}


\begin{thebibliography}{99}
\bibitem{Akrivis2011NM}
G. Akrivis, C. Makridakis, and R. H. Nochetto, \newblock{Galerkin and Runge-Kutta methods: unified formulation, a posteriori error estimates and nodal superconvergence,} \newblock{\em Numer. Math.}, 118: 429--456, 2011.

\bibitem{Aziz}
A.K. Aziz and P. Monk,
\newblock{Continuous finite elements in space and time for the heat equation,}
\newblock{\em Math. Comp.}, 52: 255-274, 1989.

\bibitem{Bernardi}
C. Bernardi and Y. Maday,
\newblock{Spectral methods,}
\newblock{\em Handbook of Numerical Analysis}, 5: 209--485, 1997.

\bibitem{Brown}
W.C. Brown,
\newblock{\em Matrices over Commutative Rings,}
\newblock{Marcel Dekker, New York, 1993.}


\bibitem{Chen}
J. Chen, Z. Chen, T. Cui, and L. Zhang,
\newblock{An adaptive finite element method for the eddy current model
with circuit/field couplings,}
\newblock{\em SIAM J. Sci. Comput.}, 32: 1020--1042, 2010.

\bibitem{ChenLi}
Z. Chen, K. Li, and X. Xiang,
\newblock{An adaptive high-order unfitted finite element method for elliptic interface problems,}
{\em Numer. Math.}, 149: 507-548, 2021.


\bibitem{Chen2021preprint}
Z. Chen, Y. Liu, and X. Xiang,
\newblock{A high order explicit time finite element method for the acoustic wave equation with discontinuous coefficients},
\newblock{ arXiv:2112.02867v2}.

\bibitem{YaoMC2017}
Y. Cheng, X. Meng, and Q. Zhang,
\newblock{Application of generalized Gauss-Radau projections for the local discontinuous Galerkin method for linear convection-diffusion equations},
\newblock{\em Math. Comp.}, 86: 1233--1267, 2017.

\bibitem{CockburnShu1998SINUM}
B. Cockburn and C.-W. Shu,
\newblock{ The local discontinuous Galerkin method for time-dependent convection-diffusion systems},
\newblock{\em SIAM J. Numer. Anal.}, 35: 2440--2463, 1998.






\bibitem{Douglas}
J. Douglas Jr., T. Dupont, and M.F. Wheeler,
\newblock{A quasi projection analysis of Galerkin methods for parabolic and hyperbolic equations,}
\newblock{\em Math. Comp.}, 32: 345-362, 1978.

\bibitem{French1996CTG}
D. A. French and T. E. Peterson,
\newblock{ A continuous space-time finite element method for the wave equation,}
\newblock{\em Math. Comp.,} 65: 491--506, 1996.

\bibitem{Saad1989}
E. Gallopoulos, and Y. Saad,
\newblock{On the parallel solution of parabolic equations,}
\newblock{\em Proceedings of the 3rd international conference on Supercomputing,} 17--28, 1989.

\bibitem{Gander}
M.J. Gander,
\newblock{50 years of time parallel time integration,}
\newblock{\em in Multiple shooting and time domain decomposition,} T. Carraro, M. Geiger, S. K\"orkel, and R. Rannacher, eds., Springer-Verlag, Berlin, 2015, pp. 69--114.

\bibitem{Golub}
G.H. Golub and C.F. Van Loan,
\newblock{\em Matrix Computations,} third edition,
\newblock{The John Hopkins University Press, Baltimore, Maryland, 1996}

\bibitem{Monk2014JSC}
R. Griesmaier, and P. Monk,
\newblock{Discretization of the Wave Equation Using Continuous
Elements in Time and a Hybridizable Discontinuous
Galerkin Method in Space,}
\newblock{\em J. Sci. Comput.,} 58: 472--498, 2014.


\bibitem{Hairer1996}
E. Hairer and G. Wanner,
\newblock{\em Solving Ordinary Differential Equations II,}
\newblock{Springer-Verlag, Berlin, Heidelberg, 1996.}

\bibitem{Hulme}
B.L. Hulme,
\newblock{One-step piecewise polynomial Galerkin methods for initial value problems,}
\newblock{\em Math. Comp.}, 26: 415--426, 1972.

\bibitem{Perron}
O. Perron,
\newblock{\em Algebra, Volume II},
\newblock{Guschens B\"ucherei, Berlin, 1933.}



\bibitem{Richter}
T. Richter, A. Springer, and Vexler, {Efficient numerical realization of discontinuous Galerkin methods for temporal discretization of parabolic problems,} {\em Numer. Math.}, 124: 151--182, 2013.

\bibitem{saff1975Num}
E. B. Saff and R. S. Varga,
\newblock{On the zeros and Poles of Pad\'{e} Approximation to $e^z$},
\newblock{\em Numer. Math.}, 25: 1--14, 1975.

\bibitem{Schwab1998hp}
D. Sch\"otzau and Ch. Schwab,
\newblock{ \em p- and hp- Finite Element Methods,}
\newblock{Oxford Science Publications, New York, 1998.}

\bibitem{Schwab}
D. Sch\"otzau and Ch. Schwab,
\newblock{The time discretization of parabolic problems by the hp-version of the discontinuous Galerkin finite element method,}
\newblock{\em SIAM J. Numer. Anal.}, 38: 837--875, 2000.

\bibitem{Southworth}
B.N. Southworth, O. Krzysik, W. Pazner, and H. De Sterck,
\newblock{Fast solution of fully implicit Runge-Kutta and discontinuous Galerkin in time for numerical PDEs, Part I: The linear setting,} \newblock{\em SIAM J. Sci. Comput.}, 44: A416--A443, 2022.


\bibitem{Szego}
G. Szeg\"o, \newblock{\em Orthogonal Polynomials}, \newblock{American Mathematical Society, New York, 1939.}

\bibitem{Wu2021SINUM}
Y. Wu and Y. Bai,
\newblock{Error analysis of energy-preserving mixed finite element methods for the Hodge wave equation,}
\newblock{\em SIAM J. Numer. Anal.}, 59: 1433--1454, 2021.


\end{thebibliography}
\end{document}